\newtheorem{theorem}{Theorem}[section]
\newtheorem{proposition}{Proposition}
\theoremstyle{definition}
\newtheorem{definition}[theorem]{Definition}
\newtheorem{assumption}[theorem]{Assumptions}
\newtheorem{algorithm}[theorem]{Algorithm}
\newtheorem{example}[theorem]{Example}
\newtheorem{remark}{Remark}
\newcommand{\ang}[1]{\langle #1 \rangle}
\newcommand{\bb}[1]{\lbrace #1 \rbrace}
\newcommand{\co}{\operatorname{co}}
\newcommand{\epi}{\operatorname{epi}}
\newcommand{\sign}{\operatorname{sign}}
\newcommand{\di}{\operatorname{d}}
\newcommand{\diam}{\operatorname{diam}}
\newcommand{\dH}{\di_H}
\newcommand{\inter}{\operatorname{int}}
\newcommand{\norm}[1]{\left\|#1\right\|}
\newcommand{\Orem}[1]{\mathcal{O}(#1)}
\newcommand{\rk}{\operatorname{rank}}
\newcommand{\RSU}{\mathcal{R}_\leq}
\newcommand{\scal}[2]{\langle {#1},{#2} \rangle}
\newcommand{\Y}{\operatorname{Y}}
\newcommand{\R}{{\mathbb R}}
\newcommand{\N}{{\mathbb N}}
\title[Construction of the minimum time function via reachable sets] 
      {Construction of the Minimum Time Function for Linear Systems
             Via Higher-Order Set-Valued Methods}
\author[Robert Baier and Thuy T. T. Le]{}
\subjclass{49N60, 93B03(49N05, 49M25, 52A27)}
 \keywords{Minimum time function, reachable sets, linear control problems, higher-order set-valued methods, 
direct discretization methods.}
 \email{robert.baier@uni-bayreuth.de}
 \email{lethienthuy@gmail.com}
\thanks{The second author is supported by a PhD fellowship for foreign students at the Universit\`a di Padova funded by Fondazione CARIPARO. This paper was developed while the second author was visiting the Department of Mathematics of the University of Bayreuth.}
\thanks{$^*$ Corresponding author: Thuy T. T. Le}
\begin{document}
\maketitle

\centerline{\scshape Robert Baier}
\medskip
{\footnotesize
 \centerline{ Universit\"at Bayreuth, Mathematisches Institut}
   \centerline{95440 Bayreuth, Germany}
} 

\medskip

\centerline{\scshape Thuy T. T. Le$^*$}
\medskip
{\footnotesize
 \centerline{Otto von Guericke University Magdeburg, Department of Mathematics}
   \centerline{Universit\"atsplatz 2, 39106 Magdeburg, Germany}
}

\bigskip


\begin{abstract}
The paper is devoted to introducing an approach to compute the approximate minimum time function of control problems which is based on reachable set approximation and uses arithmetic operations for convex compact sets. In particular, in this paper the theoretical justification of the proposed approach is restricted to a class of linear control systems.   The error estimate of the fully discrete reachable set is provided by employing the Hausdorff distance to the continuous-time reachable set. The detailed procedure solving the corresponding discrete set-valued problem is described. Under standard assumptions, by means of convex analysis and knowledge of the regularity of the true minimum time
function, we estimate the error of its approximation. Higher-order discretization of the reachable set of the linear control problem can balance missing regularity (e.g., H\"older continuity) of the minimum time function for smoother problems. To illustrate the error estimates and to demonstrate differences to other numerical approaches  we provide a collection of numerical examples which either allow higher order of convergence with respect to time discretization or where the continuity of the minimum time function cannot be sufficiently granted, i.e.,~we study cases in which the minimum time function is H\"older continuous or even discontinuous.
\end{abstract}
\section{Introduction}

\ Reachable sets have attracted several mathematicians since longer
times both in theoretical and in numerical analysis.
The approaches for the numerical computation of reachable sets mainly split into
two classes, those for reachable sets up to a given time and the other ones 
for reachable sets at a given end time. We will give here only exemplary references,
since the literature is very rich (more references are given in an early version of this paper in \cite{BLe}). There are methods based on overestimation and 
underestimation of reachable sets based on ellipsoids \cite{KV}, zonotopes \cite{Alth,GlGM} or on approximating the reachable set with support functions resp.~supporting points \cite{BL,Alth}. Other popular and well-studied approaches involve level-set methods, semi-Lagrangian schemes and the computation of an associated Hamilton-Jacobi-Bellman equation, see~e.g.~\cite{BCD,BFZ,F} or are based on the viability concept~\cite{ABS-P} and the viability kernel algorithm~\cite{S-P}. Further methods \cite{BL,BLcham,BPhD} are set-valued generalizations of quadrature methods and Runge-Kutta methods initiated by the works~\cite{D2F,V_int,DF,W,DV}.

Here, we will focus on set-valued quadrature methods and set-valued Runge-Kutta methods
with the help of support functions or supporting points,
since they do not suffer on the wrapping effect or on an exploding number of vertices
and the error of restricting computations only for finitely many directions 
can be easily estimated. Furthermore, they belong to the most efficient and fast methods
(see~\cite[Sec.~3.1]{Alth}, \cite[Chap.~9, p.~128]{lG}) for linear control problems
to which we restrict the computation of the minimum time function $T(x)$.
We refer to~\cite{BPhD,BL,lG} (and references therein) for technical details on the numerical implementation, although we will lay out the main ideas of this approach for reader's convenience.

In optimal control theory the regularity of the minimum time functions is studied
intensively, see e.g.~in~\cite{CMW,CNN} and references therein. For the error estimates in this paper it will be essential to single out example classes for which the minimum time function is Lipschitz (no order reduction of the set-valued method) or H\"older-continuous with exponent $\frac{1}{2}$ (order reduction by the square root).
 
Minimum time functions are usually computed by solving the Hamilton-Jacobi-Bellman (HJB) equations and by the dynamic programming principle, see e.g.~\cite{BBZ,BFZ,BF1,BF2,BFS,CL,GL}. In this approach, the minimal requirement on the regularity of $T(x)$ is the continuity, see e.g.~\cite{BF1,CL,GL}. The solution of a HJB equation with suitable boundary conditions gives immediately -- after a transformation -- the minimum time function and its level sets provide a description of the reachable sets. A natural question occurring is whether it is also possible to do the other way around, i.e., to reconstruct the minimum time function $T(x)$ if knowing the reachable sets. One of the attempts was done in \cite{BBZ,BFZ}, where the approach is based on PDE solvers and on the reconstruction of the optimal control and solution via the value function.
On the other hand, our approach in this work is completely different. It is based on very efficient quadrature methods for convex reachable sets as described in Section~3.
 
In this article we present a novel approach for calculating the minimum time function.
The basic idea is to use set-valued methods for approximating reachable sets at a given
end time with computations based on support functions resp.~supporting points.
By reversing the time and start from the convex target as initial set we compute
the reachable sets for times on a (coarser) time grid. 
Due to the strictly expanding condition for reachable sets,
the corresponding end time is assigned to all boundary points of the computed 
reachable sets. Since we discretize in time and in space (by choosing a finite number
of outer normals for the computation of supporting points), the vertices of the polytopes
forming the fully discrete reachable sets are considered as data points of an irregular
triangulated domain. On this simplicial triangulation, a piecewise linear approximation
yields a fully discrete approximation of the minimum time function. 

The well-known interpolation error and the convergence results for the set-valued method can be applied to yield an easy-to-prove error estimate by taking into account the regularity of the minimum time function. It requires at least H\"older continuity and 
involves the maximal diameter of the simplices in the used triangulation. A second error estimate is proved without explicitely assuming the continuity of the minimum time function and depends only on the time interval between the computed (backward) reachable sets. The computation does not need the nonempty interior of the target set in contrary to the Hamilton-Jacobi-Bellman approach, for singletons the error estimate even improves. It is also able to compute discontinuous minimum time functions, since the underlying set-valued method can also compute lower-dimensional reachable sets. There is no explicit dependence of the algorithm and the error estimates on the smoothness of optimal solutions or controls. These results are devoted to reconstructing discrete 
optimal trajectories which reach a set of supporting points from a given target 
for a class of linear control problems and also proving the convergence of discrete 
optimal controls by the use of nonsmooth and variational analysis. 
The main tool is Attouch's theorem that allows to benefit from the convergence of the discrete reachable sets to the time-continuous one.

The plan of the article is as follows: in Section \ref{sec:preliminaries} we collect notations, definitions and basic properties of convex analysis, set operations, reachable sets and the minimum time function. The convexity of the reachable set for linear control problems and the characterization of its boundary via the level-set of the minimum time function is the basis for the algorithm formulated in the next section. In Section \ref{sec:construction}, we briefly introduce the reader to set-valued quadrature methods and Runge-Kutta methods and their implementation and discuss the convergence order for the fully discrete approximation of reachable sets at a given time both in time and in space.
In the next subsection we present the error estimate for the fully discrete minimum time function which depends on the regularity of the minimum time function and on the
convergence order of the underlying set-valued method. Another error estimate expresses
the error only on the time period between the calculated reachable sets. The last subsection discusses the construction of discrete optimal trajectories and convergence of discrete optimal controls. A series of accompaning examples can be found in Section \ref{sec:num_tests}. We compare the error of the minimum time function with respect to time and space discretization studying the influence of its regularity and of the smoothness of the support functions of corresponding set-valued integrands.
We first consider several linear examples with various target 
and control sets and study different levels of regularity of the corresponding
minimum time function. The nonlinear example in Subsection~\ref{subsec_nonlin} demonstrates that this approach is not restricted to the  class of linear control systems. 
 Although first numerical experiences are gathered there, its theoretical justification has to be
gained by a forthcoming paper. In Subsection~\ref{subsec_non_exp_prop} 
one example demonstrates the need of the strict expanding property of (union of) reachable
sets for characterizing boundary points of the reachable set via time-minimal points. The section ends with a collection of examples which either are more challenging for numerical 
calculations or partially violate our assumptions. Finally, a discussion of our approach and possible improvements can be found in Section~\ref{sec:concl}.

\section{Preliminaries}\label{sec:preliminaries}
In this  section we will recall some notations, definitions as well as basic knowledge of convex analysis and control theory for later use. Let $\mathcal{C}(\mathbb{R}^n)$ be the set of convex, compact,  nonempty  subsets of $\mathbb{R}^n$, 
 $\| \cdot \|$ be the Euclidean norm and $\scal{\cdot}{\cdot}$ the inner product in $\mathbb{R}^n$,
 $B_r(x_0)$ be  the closed (Euclidean) ball with radius $r>0$ centered at $x_0$  and $S_{n-1}$ be the unique sphere in $\mathbb{R}^n$. 
Let $A$ be a subset of $\mathbb{R}^n$, $M$ be an $n\times n$ real matrix, then $B_r(A):= \bigcup_{x\in A}B_r(x)$, $\norm{M}$ denotes the \emph{lub-norm}
 of $M$ with respect to $\|\cdot\|$, i.e., the spectral norm.
The \emph{convex hull}, the \emph{boundary}, the \emph{interior} and the \emph{diameter} of a set $A$ are signified by $\co(A),\,\partial A,\, \inter(A),\, \diam(A)$ respectively. We define the support function, the supporting points in a given direction and the set arithmetic operations as follows.
\begin{definition}
Let $A\in \mathcal{C}(\mathbb{R}^n),\, l \in  \R^n$. The \emph{support function} and the \emph{supporting face} of $A$ in the direction $l$ are defined as, respectively,
\begin{equation*}
\begin{aligned}
\delta^*(l,A):= \max_{x\in A} \,\scal{l}{x},\,\,
 \Y(l,A):=\bb{x\in A \colon  \scal{l}{x}=\delta^*(l,A)}.
\end{aligned}
\end{equation*}
An element of the supporting face is called \emph{supporting point}.
\end{definition}
Known properties of the convex hull, the support function and the supporting points when applied to the set operations introduced above can be found in ,e.g.\,~\cite[Chap.~0]{AC}, \cite[Sec.~4.6, 18.2]{ABS-P}, \cite{BPhD,lG,Alth}.
Especially, the convexity of the arithmetic set operations becomes obvious.
We also recall the definition of Hausdorff distance which is the main tool to measure the error of reachable set approximation.
\begin{definition}
Let $C,D\in   \mathcal{C}(\mathbb{R}^n),\,x\in \mathbb{R}^n$. Then the \emph{distance function} from $x$ to $D$  is $\di(x,D):=\min_{d\in D} \norm{x-d}$
and the Hausdorff distance between $C$ and $D$ is defined as
\begin{align*}
\dH(C,D)&:=\max \bb{\max_{x\in C} \di(x,D),\max_{y\in D} \di(y,C)}.
\end{align*}
\end{definition}

Now we will recall some basic notations of control theory, see e.g.,~\cite[Chap.~IV]{BCD} for more details. Consider the following linear time-variant control dynamics in $ \mathbb{R}^n$
\begin{equation}\label{LCDyn}
\begin{cases}
\begin{array}{r@{\,}l@{\quad}l}
\dot y(t) & =A(t)y(t)+B(t)u(t)
  & \text{ for a.e. } t \in [t_0, \infty),  \\
 u(t) & \in U 
  & \text{ for a.e. } t \in [t_0, \infty), \\
y(t_0) & = y_0. &
\end{array}
\end{cases}
\end{equation}
The coefficients $ A(t), B(t) $ are $n\times n$ and $n\times m$ matrices respectively,  $y_0 \in \mathbb{R}^n$ is the initial value,
 $U\in \mathcal{C}(\R^m)$ is the set of control values. Under standard assumptions, the existence and uniqueness of \eqref{LCDyn} are guaranteed for any measurable function $u(\cdot)$ and any $y_0 \in \mathbb{R}^n$. Let $\mathcal{S}\subset \mathbb{R}^n$, a nonempty compact set, be the 
\emph{target} and 
$$
\mathcal{U}:=\bb{ u \colon [t_0,\infty) \rightarrow U \text{ measurable}},
$$
the set of \emph{admissible controls}
and $y(t,y_0,u)$ be the solution of \eqref{LCDyn}. 
We define the \emph{minimum time starting from $y_0 \in \mathbb{R}^n$ to reach the target $\mathcal{S}$}  for some $u \in \mathcal{U}$
$$
t(y_0,u)=\min \,\bb{t\ge t_0:\ y(t,y_0,u)\in \mathcal{S}}\leq \infty.
$$
The \emph{minimum time function to reach $\mathcal{S}$ from $y_0$} is defined as
$
T(y_0)=\inf_{u\in \mathcal{U}} \,\bb{t(y_0,u)},
$
see e.g.,~\cite[Sec.~IV.1]{BCD}.
We also define the 
\emph{reachable sets for fixed end time} $t> t_0$, \emph{up to time $t$} 
resp.~\emph{up to a finite time} as follows:
 \begin{align*}
    &\mathcal{R}(t)  := \bb{y_0 \in \R^n: \textit{ there exists } u\in \mathcal{U},\,y(t,y_0,u)\in \mathcal{S}},\\
    &\RSU(t)   :=  \mbox{}  \bb{y_0 \in \R^n: \textit{ there exists } u\in \mathcal{U},\,y(s,y_0,u)\in \mathcal{S} \text{ for some } s\in [t_0,t]}\\& \quad\quad\quad=\bigcup_{\substack s \in [t_0,t]}\mathcal{R}(s), \\
    &\mathcal{R}  := \bb{y_0 \in \R^n: \textit{ there exists some finite time $ t \geq  t_0$ with }y_0 \in \mathcal{R}(t) } = \bigcup_{t\in [t_0,\infty)} \mathcal{R}(t).
 \end{align*}
By definition 
 \begin{equation} \label{eq:reach_leq_sub_level_set}
    \RSU(t) = \bb{y_0\in \mathbb{R}^n \colon T(y_0)\le t}
 \end{equation}
is a sublevel set
of the minimum time function, while
for a given maximal time $ t_f > t_0$ and some $t \in  I :=  [t_0, t_f]$,
$\mathcal{R}(t)$ is the set of points \emph{reachable from the target in time} $ t $ 
by the \emph{time-reversed system} 
 \begin{align} \label{eq:time_rev_cp}
    \dot{y}(t) & =\bar A(t)y(t) + \bar B(t)u(t), \\
    y(t_0) & \in \mathcal{S}, \label{InCond}
 \end{align}
 where $\bar A(t):= -A(t_0+ t_f-t),\,\bar B(t):=-B(t_0+ t_f-t)$ for shortening notations.
 In other words, $\mathcal{R}(t)$ equals  the set of starting points from 
which the system can reach the target in time $ t $. 
Sometimes $\mathcal{R}(t)$ is called the \emph{backward reachable set} which is also
considered in~\cite{BBZ} for computing the minimum time function by solving
a Hamilton-Jacobi-Bellman equation.

The following standing hypotheses are assumed to be fulfilled in the sequel.
\begin{assumption}\label{standassum}\mbox{}
\begin{enumerate}
\item[ (i) ] $ A(t),\,B(t) $ are $ n \times n $, $ n \times m $ real-valued matrices defining integrable functions on any compact interval of $[t_0,\infty) $. 
\item[ (ii) ] The control set $U\subset \mathbb{R}^m$ is  convex, compact and nonempty, i.e., $U \in \mathcal{C}(\R^m)$.
\item[ (iii) ] The target set $\mathcal{S}\in \mathbb{R}^n$ is convex, compact and nonempty, i.e., $\mathcal{S} \in \mathcal{C}(\R^n)$. \\
   Especially, the target set can be a singleton.
\item[ (iv) ] $\mathcal{R}(t)$ is \emph{strictly expanding} on the compact interval $[t_0, t_f]$, i.e., $\mathcal{R}(t_1) \subset \inter \mathcal{R}(t_2)$ for all $t_0\le t_1<t_2\le  t_f$.
\end{enumerate}
\end{assumption}
\begin{remark}
The reader can find sufficient conditions for Assumption \ref{standassum}(iv)  for  $\mathcal{S}=\bb{0}$ in \cite[Chap.~17]{HL}, \cite[Sec.~2.2--2.3]{LM}.  Under this assumption, it is obvious that
\label{rem:strict_expand}
\begin{equation*}
\begin{aligned}
 \mathcal{R}(t)= \RSU(t) .
\end{aligned}
\end{equation*}
\end{remark}
Under our standard hypotheses, the control problem~\eqref{eq:time_rev_cp} can 
equivalently be replaced by
the following linear differential inclusion
\begin{equation}\label{InLCDyn}
\dot y(t)\in \bar A(t)y(t)+\bar B(t)U \ \ \text{ for a.e. }  t \in [t_0, \infty)
\end{equation}
with absolutely continuous solutions $y(\cdot)$ (see \cite[Appendix~A.4]{Tol}). 
All the solutions of \eqref{InCond}--\eqref{InLCDyn} are represented as
\begin{equation*}
y(t)=\Phi(t,t_0)y_0+\int_{t_0}^{t} \Phi(t,s)\bar B(s)u(s)ds
\end{equation*}
for all $y_0 \in \mathcal{S},\, u\in \mathcal{U}$, and $t_0\le t<\infty$, where $\Phi(t,s)$ is the \emph{fundamental solution matrix} of the homogeneous system 
\begin{equation}\label{fundSol}
\dot y(t)=\bar A(t)y(t), 
\end{equation}
with  $\Phi(s,s)=I_n$, the $n\times n$ identity matrix. 
Using the Minkowski addition and the Aumann's integral \cite{Aum}, the reachable set can be described by means of Aumann's integral as follows
\begin{equation}\label{Rt}
\mathcal{R}(t)=\Phi(t,t_0)\mathcal{S}+\int_{t_0}^{t} \Phi(t,s)\bar B(s)Uds.
\end{equation}
For time-invariant systems, 
i.e., $ \bar A(t) = \bar A $, we have $\Phi(t,t_0)=e^{\bar A(t-t_0)}$. 

For the linear control system, under Assumptions \ref{standassum}(i)--(iii), \eqref{LCDyn} the reachable set at a fixed end time is
convex which allows to apply support functions or supporting points for its approximation.
Furthermore, the reachable sets change continuously with respect to the end time. 

The following proposition provides the connection between $\mathcal{R}(t)$ and the level set of $T(\cdot)$ at time $t$ which is essential for this approach. 
We will benefit from
the sublevel representation in~\eqref{eq:reach_leq_sub_level_set}.
 The result is related to~\cite[Theorem~2.3]{BBZ}, where the minimum time function at $x$ is 
the minimum for which $x$ lies on a zero-level set bounding the backward reachable set.
\begin{proposition}
Let Assumption \ref{standassum} be fulfilled and $t > t_0$. Then 
\label{prop:bd_descr_monotone_case_w_level_set}
\begin{equation}\label{ReachLevel}
\partial \mathcal{R}(t)=\bb{y_0  \in \mathbb{R}^n \colon T(y_0)=t}.
\end{equation}
\end{proposition}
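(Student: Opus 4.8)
The plan is to reduce the claimed set equality to an identity for the interior of $\mathcal{R}(t)$ and then establish two inclusions. By Assumption~\ref{standassum}(iv) and Remark~\ref{rem:strict_expand} we have $\mathcal{R}(t)=\RSU(t)$, so together with the sublevel representation~\eqref{eq:reach_leq_sub_level_set},
\begin{equation*}
\mathcal{R}(t)=\bb{y_0\in\R^n\colon T(y_0)\le t}.
\end{equation*}
Since $\mathcal{R}(t)\in\mathcal{C}(\R^n)$ is in particular closed, $\partial\mathcal{R}(t)=\mathcal{R}(t)\setminus\inter\mathcal{R}(t)$. Hence it suffices to prove
\begin{equation*}
\inter\mathcal{R}(t)=\bb{y_0\in\R^n\colon T(y_0)<t},
\end{equation*}
because then $\partial\mathcal{R}(t)=\bb{T\le t}\setminus\bb{T<t}=\bb{T=t}$, which is exactly~\eqref{ReachLevel}.

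For the inclusion $\bb{T<t}\subseteq\inter\mathcal{R}(t)$ I would argue directly from the strict expansion. If $T(y_0)<t$, then (recalling $T(y_0)\ge t_0$) one can choose $s$ with $T(y_0)\le s<t$ and $s,t\in[t_0,t_f]$; by the sublevel representation $y_0\in\mathcal{R}(s)$, and Assumption~\ref{standassum}(iv) yields $\mathcal{R}(s)\subset\inter\mathcal{R}(t)$ since $s<t$, whence $y_0\in\inter\mathcal{R}(t)$. As a by-product this step shows $\inter\mathcal{R}(t)\neq\emptyset$ for $t>t_0$, so the degenerate lower-dimensional case cannot occur here.

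The main work lies in the reverse inclusion $\inter\mathcal{R}(t)\subseteq\bb{T<t}$, and here I would exploit the continuity of $s\mapsto\mathcal{R}(s)$ in the Hausdorff distance together with a supporting-hyperplane estimate. Let $y_0\in\inter\mathcal{R}(t)$, so $B_r(y_0)\subseteq\mathcal{R}(t)$ for some $r>0$; by continuity there is $s\in[t_0,t)$ with $\dH(\mathcal{R}(s),\mathcal{R}(t))<r$. I claim $y_0\in\mathcal{R}(s)$. If not, since $\mathcal{R}(s)$ is convex and compact there is a unit vector $l$ with $\scal{l}{y_0}\ge\delta^*(l,\mathcal{R}(s))$; then $z:=y_0+rl\in B_r(y_0)\subseteq\mathcal{R}(t)$ satisfies $\di(z,\mathcal{R}(s))\ge\scal{l}{z}-\delta^*(l,\mathcal{R}(s))=\scal{l}{y_0}+r-\delta^*(l,\mathcal{R}(s))\ge r$, contradicting $\di(z,\mathcal{R}(s))\le\dH(\mathcal{R}(s),\mathcal{R}(t))<r$, the latter because $z\in\mathcal{R}(t)$. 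Hence $y_0\in\mathcal{R}(s)$, i.e.\ $T(y_0)\le s<t$.

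The crux, and the step I expect to be most delicate, is exactly this conversion of ``$y_0$ interior and reachable sets Hausdorff-close'' into the genuine membership $y_0\in\mathcal{R}(s)$; the estimate $\di(z,D)\ge\scal{l}{z}-\delta^*(l,D)$ for convex $D$ and unit $l$ (immediate from $\norm{z-d}\ge\scal{l}{z-d}$ for all $d\in D$) is what makes it rigorous and, importantly, avoids appealing to continuity of $T$ itself, which may fail as the later examples show. Combining the two inclusions yields $\inter\mathcal{R}(t)=\bb{T<t}$ and therefore~\eqref{ReachLevel}.
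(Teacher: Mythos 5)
Your proof is correct, and while it shares the paper's overall skeleton (sublevel representation $\RSU(t)=\bb{T\le t}$ plus Remark~\ref{rem:strict_expand} and strict expansion for the easy inclusion, Hausdorff continuity of $\mathcal{R}(\cdot)$ plus convexity for the hard one), the crux is handled with a genuinely different tool. The paper, facing $x+\varepsilon B_1(0)\subset \mathcal{R}(t)\subset \mathcal{R}(t_1)+\frac{\varepsilon}{2}B_1(0)$, invokes the order cancellation law of Pallaschke--Urba\'{n}ski to cancel $\frac{\varepsilon}{2}B_1(0)$ and conclude $x+\frac{\varepsilon}{2}B_1(0)\subset \mathcal{R}(t_1)$, hence $x\in\inter\mathcal{R}(t_1)$ and $T(x)\le t_1<t$. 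You instead show directly that the center point $y_0$ lies in $\mathcal{R}(s)$ via strong separation: if $y_0\notin\mathcal{R}(s)$, a unit normal $l$ with $\scal{l}{y_0}\ge\delta^*(l,\mathcal{R}(s))$ makes $z=y_0+rl\in\mathcal{R}(t)$ satisfy $\di(z,\mathcal{R}(s))\ge r$, contradicting $\dH(\mathcal{R}(s),\mathcal{R}(t))<r$. Your route is more elementary and self-contained (no citation of the cancellation theorem needed, only Cauchy--Schwarz and the support-function bound $\di(z,D)\ge\scal{l}{z}-\delta^*(l,D)$), and your reorganization into the single identity $\inter\mathcal{R}(t)=\bb{T<t}$ is cleaner than the paper's two proofs by contradiction; what the paper's cancellation argument buys in exchange is slightly more information, namely that a full ball $x+\frac{\varepsilon}{2}B_1(0)$ sits inside $\mathcal{R}(t_1)$, though that extra strength is not needed for the conclusion. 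Both arguments rely equally on the Hausdorff continuity of $s\mapsto\mathcal{R}(s)$ and on compactness and convexity of the reachable sets, so neither is more general than the other in any essential way.
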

\begin{proof}
"$\subset$":
Assume that there exists $x\in \partial \mathcal{R}(t)$ with $x\notin \bb{y_0\in \mathbb{R}^n \colon T(y_0)=t}$. 
Clearly, $x \in \RSU(t)$ and~\eqref{eq:reach_leq_sub_level_set} shows that $T(x) \leq t$.
By definition there exists
$s \in [t_0,t]$ with $x \in \mathcal{R}(s)$. Assuming $s <t$ we get the
contradiction $x \in \mathcal{R}(s) \subset \inter \mathcal{R}(t)$ from Assumption~\ref{standassum}(iv).\\
"$\supset$":
Assume that there exists $x\in \bb{y_0\in \mathbb{R}^n \colon T(y_0)=t}$ (i.e., $T(x)=t$) be such that $x\notin \partial \mathcal{R}(t)$. Since 
$x\in \mathcal{R}(t)$ by~\eqref{eq:reach_leq_sub_level_set} and we assume that $x\notin \partial \mathcal{R}(t)$, then $x\in \inter( \mathcal{R}(t))$. 

Hence, there exists $\varepsilon > 0$ with
    \begin{align*}
       x + \varepsilon B_1(0) \subset \mathcal{R}(t).
    \end{align*}
   The continuity of $\mathcal{R}(\cdot)$ ensures for $t_1 \in [t - \delta, t+\delta] 
   \cap I$ that
\begin{align*}
   \dH(\mathcal{R}(t), \mathcal{R}(t_1)) & \leq \frac{\varepsilon}{2}.
\end{align*}
   Hence,
\begin{align*}
   x + \varepsilon B_1(0) \subset \mathcal{R}(t) & \subset \mathcal{R}(t_1)
     + \frac{\varepsilon}{2} B_1(0).
\end{align*}
   The order cancellation law in~\cite[Theorem~3.2.1]{PalUrb}
   can be applied, since $\mathcal{R}(t_1)$ is convex and all sets are compact. Therefore,
    \begin{align*}
       (x + \frac{\varepsilon}{2} B_1(0)) + \frac{\varepsilon}{2} B_1(0) 
         & \subset \mathcal{R}(t_1) + \frac{\varepsilon}{2} B_1(0)
    \end{align*}
    which implies $ x + \frac{\varepsilon}{2} B_1(0) \subset \mathcal{R}(t_1) $.

   Hence, $x \in \inter( \mathcal{R}(t_1))$ with $t_1 < t$ so that $T(x) \leq t_1 < t$
   which is again a contradiction. Therefore, $\bb{y_0\in \mathbb{R}^n \colon T(y_0)=t} \subset \partial \mathcal{R}(t)$. 
   The proof is completed.
\end{proof}

In the previous characterization of the boundary of the reachable set at fixed end time
the assumption of monotonicity of the reachable sets played a crucial role. 
As stated in Remark~\ref{rem:strict_expand}, Assumption \ref{standassum}(iv) also guarantees that the union of reachable sets 
coincides with the reachable set at the largest end time and is trivially convex.
If we drop this assumption, 
we can only characterize the boundary of the \emph{union} of reachable sets up to a time
under relaxing the expanding property~(iv) while demanding convexity as can be seen in the following proposition.

\begin{proposition}
Let $t > t_0$, Assumptions \ref{standassum}(i)--(iii) and Assumption
 \begin{quote}
    (iv)' \quad $\RSU(t)$ has convex images and is strictly expanding on the compact interval $[t_0, t_f]$, i.e.,
     $
        \RSU(t_1) \subset \inter \RSU(t_2) \quad \text{for all } t_0\le t_1<t_2 \le  t_f.
     $
 \end{quote} holds. Then
\label{prop:bd_descr_w_level_set}
\begin{equation} \label{equ:bd_union_reach_sets}
  \partial \RSU(t)
  = \bb{x\in \R^n \colon T(x)=t}.
\end{equation}
\end{proposition}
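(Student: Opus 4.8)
The plan is to mirror the argument of Proposition~\ref{prop:bd_descr_monotone_case_w_level_set} almost verbatim, but with $\RSU(t)$ playing the role that $\mathcal{R}(t)$ had there. The structural reason this works is that the earlier proof used only two properties of $\mathcal{R}(t)$: first, the sublevel representation $\RSU(t)=\{y_0 : T(y_0)\le t\}$ from~\eqref{eq:reach_leq_sub_level_set}; and second, convexity and strict monotonicity (used to invoke continuity of the set-valued map and the order cancellation law). Here~(iv)$'$ supplies exactly the convexity and strict expansion of $\RSU(\cdot)$ directly, so no appeal to Assumption~\ref{standassum}(iv) or to Remark~\ref{rem:strict_expand} is needed. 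Note also that under~(iv)$'$ the monotonicity forces $\RSU(\cdot)$ to be continuous in the Hausdorff metric on $I$, and this is the only regularity input required.

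For the inclusion ``$\subset$'' I would take $x\in\partial\RSU(t)$ and suppose for contradiction that $T(x)\neq t$. Since $x\in\RSU(t)$, the representation~\eqref{eq:reach_leq_sub_level_set} gives $T(x)\le t$, so the remaining case is $T(x)=s<t$ for some $s\in[t_0,t)$. But then $x\in\RSU(s)$, and strict expansion in~(iv)$'$ yields $\RSU(s)\subset\inter\RSU(t)$, so $x\in\inter\RSU(t)$, contradicting $x\in\partial\RSU(t)$. Hence $T(x)=t$.

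For ``$\supset$'' I would take $x$ with $T(x)=t$ and suppose $x\notin\partial\RSU(t)$. By~\eqref{eq:reach_leq_sub_level_set} we have $x\in\RSU(t)$, so the assumption forces $x\in\inter\RSU(t)$, giving some $\varepsilon>0$ with $x+\varepsilon B_1(0)\subset\RSU(t)$. Continuity of $\RSU(\cdot)$ provides $\delta>0$ and a time $t_1\in[t-\delta,t+\delta]\cap I$ with $t_1<t$ and $\dH(\RSU(t),\RSU(t_1))\le\frac{\varepsilon}{2}$, whence $x+\varepsilon B_1(0)\subset\RSU(t_1)+\frac{\varepsilon}{2}B_1(0)$. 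Since $\RSU(t_1)$ is convex (by~(iv)$'$) and all sets are compact, the order cancellation law~\cite[Theorem~3.2.1]{PalUrb} applies and yields $x+\frac{\varepsilon}{2}B_1(0)\subset\RSU(t_1)$. Thus $x\in\inter\RSU(t_1)$ with $t_1<t$, so $T(x)\le t_1<t$, contradicting $T(x)=t$.

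The main obstacle — really the only nontrivial point — is verifying that $\RSU(\cdot)$ is Hausdorff-continuous so that the estimate $\dH(\RSU(t),\RSU(t_1))\le\frac{\varepsilon}{2}$ can be secured, and ensuring that $t_1$ can be chosen strictly \emph{below} $t$ (not above), since only $t_1<t$ produces the contradiction $T(x)<t$. In the monotone case this continuity came packaged with $\RSU(t)=\mathcal{R}(t)$ and the known continuity of the reachable-set map; here one must argue it from~(iv)$'$ directly. Strict monotonicity together with convex compactness gives left- and right-continuity of $t\mapsto\RSU(t)$ on $I$, which is enough; choosing $t_1$ slightly less than $t$ then respects both the continuity estimate and the strict inequality required.
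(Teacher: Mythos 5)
Your plan follows the right route --- it is the same adaptation of the proof of Proposition~\ref{prop:bd_descr_monotone_case_w_level_set} that the paper intends (the paper itself only refers to \cite[Proposition~7.1.4]{Le}), and your ``$\subset$'' direction is correct as written. The genuine gap sits in the step you yourself flag as the only nontrivial one: you justify the Hausdorff continuity of $\RSU(\cdot)$ by claiming that ``strict monotonicity together with convex compactness gives left- and right-continuity of $t\mapsto\RSU(t)$''. That claim is false. The map $F(t):=B_{1+t}(0)$ for $t\in[0,\tfrac{1}{2})$ and $F(t):=B_{2+t}(0)$ for $t\in[\tfrac{1}{2},1]$ has convex compact images and is strictly expanding in exactly the sense of (iv)' (its radii are strictly increasing, and a closed ball lies in the interior of any concentric closed ball of larger radius), yet it is not left-continuous at $t=\tfrac{1}{2}$: for every $t_1<\tfrac{1}{2}$ one has $\dH(F(\tfrac{1}{2}),F(t_1))\ge 1$. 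So for such a map there is no $t_1<t$ with $\dH(F(t),F(t_1))\le\frac{\varepsilon}{2}$, which is precisely the estimate your ``$\supset$'' argument requires; Assumption (iv)' alone cannot deliver it.

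The repair is to take the continuity from where it actually comes from, namely Assumptions~\ref{standassum}(i)--(iii), which remain in force: by the representation~\eqref{Rt}, $\mathcal{R}(\cdot)$ is Hausdorff continuous (hence uniformly continuous) on the compact interval $I$, as the paper notes right after~\eqref{Rt}. This transfers to the union: for $t_0\le t_1<t_2\le t_f$ every $y\in\RSU(t_2)$ lies in some $\mathcal{R}(s)$ with $s\in[t_0,t_2]$; if $s\le t_1$ then $\di(y,\RSU(t_1))=0$, and otherwise $\di(y,\RSU(t_1))\le \di(y,\mathcal{R}(t_1))\le\dH(\mathcal{R}(s),\mathcal{R}(t_1))$, so that $\dH(\RSU(t_1),\RSU(t_2))\le\max_{s\in[t_1,t_2]}\dH(\mathcal{R}(s),\mathcal{R}(t_1))\rightarrow 0$ as $t_2-t_1\rightarrow 0$. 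The same structure also yields the compactness of $\RSU(t)$ (it is the projection of the compact graph $\bb{(s,y)\colon s\in[t_0,t],\ y\in\mathcal{R}(s)}$), a fact you use twice without justification: once implicitly for $\partial\RSU(t)\subset\RSU(t)$ in the ``$\subset$'' direction, and once when invoking the order cancellation law of \cite[Theorem~3.2.1]{PalUrb}, which needs $\RSU(t_1)$ to be closed (convexity being supplied by (iv)'). With these two facts supplied from (i)--(iii), the rest of your argument goes through verbatim.
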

 \begin{proof}
  \emph{The proof can be found in~\cite[Proposition~7.1.4]{Le}.}
\end{proof}

\begin{remark}\label{rem:inclusion}
   Assumption~(iv)' implies that the considered system is small-time controllable, see~\cite[Chap.~IV, Definition 1.1]{BCD}. Moreover, under the assumption of small-time controllability the nonemptiness of the interior of $\mathcal{R}$ and the continuity of the minimum time
function in $\mathcal{R}$ are consequences, see~\cite[Chap.~IV, Propositions~1.2,~1.6]{BCD}.
Assumption~(iv)' is essentially weaker than~(iv), since the convexity of $\RSU(t)$
and the strict expandedness of $\RSU(\cdot)$ follow by Remark~\ref{rem:strict_expand}. 
The inclusion for $\RSU(\cdot)$ in this assumption is equivalent to small-time controllability
(STC) for time-invariant systems, sufficient conditions for STC in this case via generalized
Petrov and second-order conditions are discussed in~\cite{AM}. 
Under one of these two conditions the minimal time function is either continuous or
H\"older continuous with exponent $\frac{1}{2}$. 
Extensions of the continuity property
to $\varphi$-convexity can be found in~\cite{CMW}.

In the previous proposition we can allow that $\RSU(t)$ is lower-dimensional and are 
still able to prove the inclusion "$\supset$" in~\eqref{equ:bd_union_reach_sets}, 
since the interior of $\RSU(t)$ would be empty 
and $x$ cannot lie in the interior which also creates the (wanted) contradiction.

For the other inclusion "$\subset$" the nonemptiness of the interior of $\mathcal{R}(t)$
in Proposition~\ref{prop:bd_descr_monotone_case_w_level_set}
resp.~the one of $\RSU(t)$ in Proposition~\ref{prop:bd_descr_w_level_set} is essential. Therefore, the expanding property
in~Assumptions~(iv) resp.~(iv)' cannot be relaxed by assuming only monotonicity in the sense
 \begin{align} \label{ex:relaxed_expand}
    \mathcal{R}(s) \subset \mathcal{R}(t) \quad\text{or}\quad
    \RSU(s) \subset \RSU(t)
 \end{align}
for $s < t$ as Example \ref{ex:counter_ex_1} shows.
\end{remark}


\section{Approximation of the minimum time function}\label{sec:construction}
\subsection{Set-valued discretization methods}\label{subsec:sv_discr_meth}
Consider the linear control dynamics \eqref{LCDyn}. For a given $x\in \mathbb{R}^n$, the problem of computing approximately the minimum time $T(x)$ to reach $\mathcal{S}$ by following the dynamics $\eqref{LCDyn}$ is deeply investigated in literature. It was usually obtained  by solving the associated discrete Hamilton-Jacobi-Bellman equation (HJB), see, for instance, \cite{BF1,F,CL,GL}. Neglecting the space discretization we obtain an approximation of $T(x)$. In this paper, we will introduce another approach to treat this problem based on approximation of the reachable set of the corresponding linear differential inclusion. The approximate minimum time function is not derived from the PDE solver, but from iterative set-valued methods or direct discretization of control problems.

Our aim now is to compute $\mathcal{R}(t)$ numerically up to a maximal time $t_f$ based on the representation~\eqref{Rt} by means of set-valued methods to approximate  Aumann's integral. There are many approaches to achieving this goal. We will describe three known options for discretizing the reachable set which are used in the following. 

Consider for simplicity of notations an equidistant grid over the interval $I=[t_0,t_f]$ with $N$ subintervals, step size $h = \frac{t_f - t_0}{N}$ and grid points $t_i=t_0+ih$, $i=0,\ldots,N$.
\begin{enumerate}
\item[(I)] Set-valued quadrature methods with the exact knowledge of the fundamental solution matrix of \eqref{fundSol} (see e.g.,~\cite{V_int,D2F,BL}, \cite[Sec.~2.2]{BPhD}): as in the pointwise case, we replace the integral $\int_{t_0}^{t} \Phi(t,s)\bar B(s)Uds$ by some quadrature scheme of order $p$ with non-negative weights.
   Therefore, \eqref{Rt} is approximated by
\begin{equation} \label{eq:sv_quad_meth_global}
\mathcal{R}_h(t_N)=\Phi(t_N,t_0)\mathcal{S}+h \sum_{i=0}^{N}c_i \Phi(t_N,t_i)\bar B(t_i)U
\end{equation}
with weights $c_i\ge 0,\,i=0,\ldots,N$. Moreover, the following error estimate holds:
\begin{equation*}
\dH(\int_{t_0}^{t_N} \Phi(t_N,s)\bar B(s)Uds,h \sum_{i=0}^{N}c_i \Phi(t_N,t_i)\bar B(t_i)U)\le Ch^p.
\end{equation*}
\item[(II)] Set-valued combination methods (see e.g.,~\cite{BL}, \cite[Sec.~2.3]{BPhD}): we replace $\Phi(t_N,t_i)$ in method $ (I) $ by its approximation (e.g.,~via ODE solvers of the corresponding matrix equation) such that 
\begin{enumerate}
\item[a)]$\Phi_h(t_{m+n},t_0)=\Phi_h(t_{m+n},t_m)\Phi_h(t_m,t_0)$ for all $m \in \{0,\ldots,N\}$, $n \in \{0,\ldots,N-m\}$.   
\item[b)]$\sup_{0\le i\le N} \norm{\Phi(t_N,t_i)-\Phi_h(t_N,t_i)}\le Ch^p.$
\end{enumerate}
Then, the discrete reachable sets is globally resp.~locally recursively represented as
\begin{align}
\mathcal{R}_h(t_N) & =\Phi_h(t_N,t_0)\mathcal{S}+h\sum_{i=0}^{N}c_i \Phi_h(t_N,t_i)\bar B(t_i)U, \label{eq:sv_comb_meth_global}\\
\mathcal{R}_h(t_0) & = \mathcal{S}, \label{eq:sv_quad_meth_local_start} 
\\ 
\mathcal{R}_h(t_{i+1}) & =\Phi_h(t_{i+1},t_{i})\mathcal{R}_h(t_{i})+ h \sum_{j=0}^{1} \widetilde{c}_{ij} \Phi_h(t_{i+1},t_{i+j})\bar B(t_{i + j})U.
\label{semigroupcombmethRh}
\end{align}
\item[(III)] Set-valued Runge-Kutta methods (see e.g.,~\cite{DF,W,V,B}): \\
We can approximate \eqref{InLCDyn} by set-valued analogues of Runge-Kutta schemes.
The discrete reachable set is computed recursively with the starting condition \eqref{eq:sv_quad_meth_local_start} for the set-valued Euler scheme (see e.g.,~\cite{DF}) as
\begin{equation} \label{eq:sv_euler_rec}
\mathcal{R}_h(t_{i+1})=  \Phi_h(t_{i+1},t_i) \mathcal{R}_h(t_i)+hB(t_i)U,
\end{equation}
 for the set-valued Heun's scheme  with piecewise constant selections  
 (see e.g., \cite{V}) as   
\begin{equation} \label{eq:sv_heun_rec}
\mathcal{R}_h(t_{i+1})= \Phi_h(t_{i+1},t_i) \mathcal{R}_h(t_i)
+\frac{h}{2}\Big( (I + h A(t_{i+1}))B(t_i)+B(t_{i+1})\Big)U.
\end{equation}
\end{enumerate}
An example of $R_h$ with different choices of numerical methods is as follows.
\begin{equation*}
R_h(t_{j+1}) = \begin{cases}e^{hA} R_h(t_j) + h e^{hA} \overline{B} U & \mbox{set-valued Riemann sum}, \\
(I+hA) R_h(t_j) + h (I+hA) \overline{B} U & \mbox{Riemann sum combined with Euler}, \\
(I+hA)R_h(t_j) + h \overline{B} U &  \mbox{set-valued Euler}.
\end{cases}
\end{equation*}
The purpose of this paper is not to focus on the set-valued numerical schemes themselves, but on the approximative construction of $T(\cdot)$. Thus, without loss of generality, we mainly utilize the scheme described in (II) to present our idea from now on. In practice, there are several strategies in control problems to discretize the set of controls $\mathcal{U}$, see e.g.,~\cite{BBCG}. Here  we choose a  \emph{piecewise constant} approximation $\mathcal{U}_h$ for the sake of simplicity which corresponds to use only one selection on the subinterval $[t_i,t_{i+1}]$ in the corresponding set-valued quadrature method. 
Depending on the choice of the method, we can find a subset $\mathcal{U}_h$ of $ U $, usually the piecewise constant controls so that in the case (II), for instance, we have
\begin{align*}
\mathcal{R}_h(t_N)=\bb{y\in \mathbb{R}^n \colon \text{ there exists a piecewise constant control } u_h \in \mathcal{U}_h \text{ and } y_0 \in \mathcal{S}\\
\text{ such that } y=\Phi_h(t_N,t_0)y_0+h\sum_{i=0}^{N}c_i \Phi_h(t_N,t_i)\bar B(t_i)u_h(t_i)},
\end{align*}
or equivalently
$
\mathcal{R}_h(t_N)=\Phi_h(t_N,t_0)\mathcal{S}+h\sum_{i=0}^{N}c_i \Phi_h(t_N,t_i)\bar B(t_i)U.
$
We set
\begin{equation*}
t_h(y_0,y,u_h) = \min \bb{ t_n \colon n\in \mathbb{N},\,\, y=\Phi_h(t_n,t_0)y_0+h\sum_{i=0}^{n}c_i \Phi_h(t_n,t_i)\bar B(t_i) u_h(t_i) }
\end{equation*}
for some  $y\in \mathbb{R}^n,\,y_0\in \mathcal{S}$ and a piecewise constant grid function $u_h$ with $u_h(t_i) = u_i \in U$, $i=0,\ldots,n$. If there does not exist such a grid control $u_h$ which reaches $y$ from $y_0$ by the corresponding discrete trajectory, $t_h(y_0,y,u_h)=\infty$. Then the discrete minimum time function $T_h(\cdot)$ is defined as
\begin{equation*}
T_h(y)=\min\limits_{\substack{ u_h\in \mathcal{U}_h\\[0.3ex] y_0\in \mathcal{S}} }\, t_h(y_0,y,u_h).
\end{equation*}
Notice that the definitions of $\mathcal{R}_h$ and $t_h$  for the remaining cases (I) and (III) can be derived in a similar way by using the corresponding expressions of $y$.
\begin{proposition}
In all of the constructions (I)--(III) described above, $\mathcal{R}_h(t_N)$ is a convex, compact and nonempty set.
\end{proposition}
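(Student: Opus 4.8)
The plan is to reduce the claim to the elementary observation that the family $\mathcal{C}(\R^n)$ is closed under the only three operations appearing in the constructions (I)--(III): Minkowski addition, multiplication by a real matrix, and multiplication by a nonnegative scalar. These closure properties are standard (see, e.g., \cite[Chap.~0]{AC}): if $C,D\in\mathcal{C}(\R^n)$, then $C+D$ is the image of the compact set $C\times D$ under the continuous addition map and hence compact, it is convex as a Minkowski sum of convex sets, and it is obviously nonempty; for a real $n\times n$ (resp.~$n\times m$) matrix $M$ the image $MC$ is a continuous linear image of a convex, compact, nonempty set and therefore again lies in $\mathcal{C}(\R^n)$, the case $M=\lambda I_n$ with $\lambda\ge 0$ covering nonnegative scalar multiples. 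Once these three facts are in hand, the rest is bookkeeping.

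First I would check that every building block of the schemes sits in the right class. Since $U\in\mathcal{C}(\R^m)$ and each $\bar B(t_i)$ (resp.~$B(t_i)$) is an $n\times m$ matrix, the product $\bar B(t_i)U$ is a linear image of $U$ and thus belongs to $\mathcal{C}(\R^n)$; premultiplying by the relevant transition matrix ($\Phi$ in scheme (I), $\Phi_h$ in scheme (II)) and scaling by the nonnegative weight $hc_i$ keeps it in $\mathcal{C}(\R^n)$, while $\Phi(t_N,t_0)\mathcal{S}$ resp.~$\Phi_h(t_N,t_0)\mathcal{S}$ also lies in $\mathcal{C}(\R^n)$ because $\mathcal{S}\in\mathcal{C}(\R^n)$ by Assumption~\ref{standassum}(iii). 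For the nonrecursive representations---scheme (I) in~\eqref{eq:sv_quad_meth_global} and the global form of scheme (II) in~\eqref{eq:sv_comb_meth_global}---the set $\mathcal{R}_h(t_N)$ is therefore a finite Minkowski sum of members of $\mathcal{C}(\R^n)$ and hence itself an element of $\mathcal{C}(\R^n)$, which is exactly the assertion.

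For the recursive representations---the local form of scheme (II) in~\eqref{eq:sv_quad_meth_local_start}--\eqref{semigroupcombmethRh} and the set-valued Euler and Heun schemes~\eqref{eq:sv_euler_rec}--\eqref{eq:sv_heun_rec}---I would argue by induction on the grid index $i$, the base case being immediate from $\mathcal{R}_h(t_0)=\mathcal{S}\in\mathcal{C}(\R^n)$. In the inductive step, assuming $\mathcal{R}_h(t_i)\in\mathcal{C}(\R^n)$, the term $\Phi_h(t_{i+1},t_i)\mathcal{R}_h(t_i)$ again lies in $\mathcal{C}(\R^n)$, while the remaining increment is once more a nonnegatively scaled linear image of $U$; for the Heun scheme one simply reads $\frac{h}{2}\big((I+hA(t_{i+1}))B(t_i)+B(t_{i+1})\big)$ as a single $n\times m$ matrix acting on $U$. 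Adding these members of $\mathcal{C}(\R^n)$ gives $\mathcal{R}_h(t_{i+1})\in\mathcal{C}(\R^n)$ and closes the induction. I do not expect a genuine obstacle here: the proposition is a structural consequence of the closedness of $\mathcal{C}(\R^n)$, and the only points needing care are the bookkeeping of matrix dimensions---so that each $\bar B(t_i)U$ is correctly seen to land in $\R^n$ rather than $\R^m$---and reading the Heun increment as one matrix times $U$. Notably, the nonnegativity of the weights is convenient but not essential for this particular claim, since multiplication by a negative scalar would preserve convexity, compactness and nonemptiness as well.
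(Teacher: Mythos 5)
Your proof is correct and follows essentially the same route as the paper's: the paper's proof consists of exactly the observation that the linearity of the dynamics together with the convexity of $\mathcal{S}$, $U$ and the arithmetic operations for convex sets yields the claim, deferring the details to the analogous proof of Proposition~3.3 in the cited reference by Baier, B\"uskens, Chahma and Gerdts. You have simply made that argument self-contained, spelling out the closure of $\mathcal{C}(\mathbb{R}^n)$ under Minkowski sums and linear images and running the induction for the recursive schemes, including the correct reading of the Heun increment as a single matrix applied to $U$.
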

\begin{proof}
The key idea of the  proof of this proposition is to employ the linearity of \eqref{InLCDyn}, in conjunction with the convexity of $\mathcal{S},\,U$ and the arithmetic operations for convex sets. In particular, it follows analogously to the proof of \cite[Proposition~3.3]{BBCG}.
\end{proof}
\begin{theorem}\label{dHRRh}
Consider the linear control problem \eqref{InCond}--\eqref{InLCDyn}. Assume that the set-valued quadrature method and the ODE solver have the same order $p$. Furthermore, assume that $\bar A(\cdot)$ and $\delta^*(l,\Phi(t_f,\cdot)\bar B(\cdot)U)$ have absolutely continuous $(p-2)$-nd derivative, the $(p-1)$-st derivative is of bounded variation uniformly with respect to all $l\in S_{n-1}$ and $\sum_{i=0}^{N}c_i\norm{B(t_i)U}$ is uniformly bounded for $N\in \mathbb{N}$. Then
\begin{equation}
\dH(\mathcal{R}(t_N),\mathcal{R}_h(t_N))\le Ch^p,
\end{equation}
where $C$ is a non-negative constant.
\end{theorem}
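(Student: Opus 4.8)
The goal is to bound the Hausdorff distance between the exact reachable set $\mathcal{R}(t_N)$ from \eqref{Rt} and its fully discrete approximation $\mathcal{R}_h(t_N)$ from the combination method \eqref{eq:sv_comb_meth_global}. The strategy is to split the total error into two contributions via the triangle inequality for the Hausdorff distance, introducing the intermediate semidiscrete set
\begin{equation*}
\widetilde{\mathcal{R}}_h(t_N):=\Phi(t_N,t_0)\mathcal{S}+h\sum_{i=0}^{N}c_i\,\Phi(t_N,t_i)\bar B(t_i)U,
\end{equation*}
in which the \emph{exact} fundamental matrix $\Phi$ is used but the integral is already replaced by the order-$p$ quadrature. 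Then
\begin{equation*}
\dH(\mathcal{R}(t_N),\mathcal{R}_h(t_N))\le\dH(\mathcal{R}(t_N),\widetilde{\mathcal{R}}_h(t_N))+\dH(\widetilde{\mathcal{R}}_h(t_N),\mathcal{R}_h(t_N)),
\end{equation*}
and I would estimate the two terms separately: the first is the pure quadrature (set-valued integration) error, the second is the error coming from replacing $\Phi$ by the ODE-solver approximation $\Phi_h$.

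\textbf{The quadrature term.} For the first term I would work entirely through support functions, using that $\dH(C,D)=\sup_{l\in S_{n-1}}|\delta^*(l,C)-\delta^*(l,D)|$ for convex compact sets and that the support function is additive under Minkowski sums and commutes with the quadrature weights (since $c_i\ge0$). Because $\delta^*(l,\Phi(t_N,t_0)\mathcal{S})$ matches exactly, the difference reduces to
\begin{equation*}
\Big|\int_{t_0}^{t_N}\!\delta^*(l,\Phi(t_N,s)\bar B(s)U)\,ds-h\sum_{i=0}^{N}c_i\,\delta^*(l,\Phi(t_N,t_i)\bar B(t_i)U)\Big|,
\end{equation*}
which is exactly the scalar quadrature error applied to the function $g_l(s):=\delta^*(l,\Phi(t_N,s)\bar B(s)U)$. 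The regularity hypotheses — that $g_l$ has absolutely continuous $(p-2)$-nd derivative with $(p-1)$-st derivative of bounded variation, \emph{uniformly in} $l$ — are precisely what the standard Peano-kernel error bound for an order-$p$ quadrature rule with nonnegative weights needs to deliver $Ch^p$ with $C$ independent of $l$ (and of $N$). Taking the supremum over $l\in S_{n-1}$ then gives the $Ch^p$ bound for the first term.

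\textbf{The matrix-replacement term.} For the second term I would again pass to support functions; the difference between corresponding Minkowski summands is controlled by
\begin{equation*}
\norm{(\Phi(t_N,t_i)-\Phi_h(t_N,t_i))\bar B(t_i)U}\le\norm{\Phi(t_N,t_i)-\Phi_h(t_N,t_i)}\cdot\norm{\bar B(t_i)U},
\end{equation*}
and hypothesis (b) gives $\norm{\Phi(t_N,t_i)-\Phi_h(t_N,t_i)}\le Ch^p$ uniformly in $i$. Summing with the weights $c_i$ and using the uniform boundedness of $\sum_{i=0}^N c_i\norm{B(t_i)U}$ contributes the factor that keeps the constant independent of $N$; the target term $\norm{(\Phi(t_N,t_0)-\Phi_h(t_N,t_0))\mathcal{S}}$ is handled the same way using compactness of $\mathcal{S}$. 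This again yields $Ch^p$. Adding the two estimates completes the proof.

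\textbf{Main obstacle.} The routine parts are the support-function splitting and the Minkowski-sum bookkeeping; the genuinely delicate step is obtaining the constant $C$ in the quadrature estimate \emph{uniformly over all directions} $l\in S_{n-1}$. This is where the assumption that $\bar A(\cdot)$ and $\delta^*(l,\Phi(t_f,\cdot)\bar B(\cdot)U)$ have the stated uniform smoothness is indispensable: one must verify that the Peano-kernel bound depends on $g_l$ only through a uniformly bounded seminorm (the total variation of $g_l^{(p-1)}$), so that passing to the supremum over $l$ does not blow up the constant. I would spend most of the effort confirming that the composition $s\mapsto\Phi(t_N,s)$ inherits enough regularity from $\bar A(\cdot)$ to transfer the assumed smoothness of $\delta^*(l,\Phi(t_f,\cdot)\bar B(\cdot)U)$ to $g_l$ with a variation bound uniform in $l$ and in $N$.
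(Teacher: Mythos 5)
Your proposal is correct and is essentially the proof the paper itself relies on: the paper does not reproduce an argument but simply cites \cite[Theorem~3.2]{BL}, and that theorem's proof uses exactly your decomposition — an intermediate semidiscrete set with exact $\Phi$, the support-function identity $\dH(C,D)=\sup_{l\in S_{n-1}}|\delta^*(l,C)-\delta^*(l,D)|$ reducing the quadrature term to a scalar Peano-kernel estimate uniform in $l$, and the $\Phi\mapsto\Phi_h$ replacement term controlled by the ODE-solver order together with the uniform bound on $\sum_{i=0}^{N}c_i\norm{B(t_i)U}$. No gaps worth flagging.
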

\begin{proof}
See \cite[Theorem 3.2]{BL}.
\end{proof}
\begin{remark}
For $p=2$ the requirements of Theorem \ref{dHRRh} are fulfilled if $A(\cdot),\,B(\cdot)$ are absolutely continuous  and $A'(\cdot),\,B'(\cdot)$ are bounded variation (see~\cite{DV}, \cite[Secs.~1.6, 2.3]{BPhD}). 
\end{remark}
The next subsection is devoted to the full discretization of the reachable set, i.e., we consider the space discretization as well. Since we will work with supporting points, we do this implicitly by discretizing the set $S_{n-1}$ of normed directions.
 This error will be adapted to the error  of the  set-valued  numerical scheme caused by the time discretization  to preserve its order of convergence with respect to time step size as stated in Theorem~\ref{dHRRh}. Then we will describe in detail the procedure to construct the graph of the minimum time function based on the approximation of the reachable sets.  We will also provide the corresponding overall   error estimate.

\subsection{Implementation and error estimate of the reachable set approximation}\label{subsec:Algorithm}
 For a particular problem, according to its smoothness in an appropriate sense we are first able to choose a difference method with a suitable order, say $O(h^p)$ for some $p>0$, to solve \eqref{fundSol} numerically effectively, for instance Euler scheme, Heun's scheme or Runge-Kutta scheme. Then we approximate Aumann's integral in~\eqref{Rt} by a quadrature formula with the same order, for instance Riemann sum, trapezoid rule, or Simpson's rule to obtain the discrete scheme of the global order $O(h^p)$. 

We implement the set arithmetic operations in~\eqref{semigroupcombmethRh} 
only approximately as indicated in \cite[Proposition~3.4]{BBCG}
and work with finitely many normed directions 
\begin{equation} \label{eq:discr_unit_spheres}
    \begin{array}{r@{\,}l@{\,}r@{\,}l@{\,}r@{\,}l}
       S_{\mathcal{R}}^{\Delta} & := \{ & l^k & \,:\, k=1,\ldots,N_{\mathcal{R}} & \} & \subset S_{n-1}, \\
       S_{U}^{\Delta} & := \{ & \eta^r & \,:\, r=1,\ldots,N_U & \} & \subset S_{m-1}
    \end{array}
 \end{equation}
satisfying
$
    \dH(S_{n-1},S_{\mathcal{R}}^{\Delta})  \le C h^p,\,
    \dH(S_{m-1},S_{U}^{\Delta}) \le C h^p
$
to  preserve  the order of the considered scheme  approximating the reachable set.

It is well-known that convex sets can be described via support functions or points in every directions. With this approximation we generate a finite set of supporting points of 
$\mathcal{R}_h(\cdot)$ and with its convex hull the fully discrete reachable set
$\mathcal{R}_{h\Delta}(\cdot)$.  
To reach this target, we also discretize the target set $\mathcal{S}$ and the control set $U$
appearing in~\eqref{eq:sv_quad_meth_local_start} and~\eqref{semigroupcombmethRh}, 
e.g.,~along the line of \cite[Proposition 3.4]{BBCG}:
\begin{equation}\label{SUdelta}
\begin{array}{r@{\,}l@{\,}r@{\,}l@{\,}r@{\,}l}
\widetilde{\mathcal{S}}_{\Delta} & :=\bigcup _  {l^k \in S_{\mathcal{R}}^{\Delta}} & \bb{y(l^k,\mathcal{S})}, & \ \,  \mathcal{S}_{\Delta} := \co(\widetilde{\mathcal{S}}_{\Delta}), \\
\widetilde{U}_{\Delta}& :=\bigcup _  {\eta^r \in S_{U}^{\Delta}} & \bb{y(\eta^r,U)} , & \ \, U_{\Delta} := \co(\widetilde{U}_{\Delta}).
\end{array}
\end{equation}
Hence, $\mathcal{S}_{\Delta},\,U_{\Delta}$ are polytopes  approximating 
$\mathcal{S}$ resp.~$U$ in the Hausdorff distance with error term $\Orem{h^p}$.

  Let $T_{h\Delta}(\cdot)$ be the fully discrete version of $T(\cdot)$ (it will be 
   defined  later in details). Our aim is to  construct the graph of $T_{h\Delta}(\cdot)$ up to a given time $t_{f}$ based on the knowledge of the reachable set approximation. We divide $[t_0, t_f]$ into $K$ subintervals each of length $\Delta t$: 
  $$\Delta t=\frac{t_f-t_0}{K},\,h=\frac{\Delta t}{N},$$
  where we have $t_f - t_0 = K N h$ and  compute subsequently the sets of supporting points $Y_{h\Delta}(\Delta t)$,\ldots, $Y_{h\Delta}(t_f)$ by the algorithm described below yielding fully discrete reachable sets
  $\mathcal{R}_{h\Delta}(i \Delta t)$, $i=1,\ldots,K$. Here $K$ decides how many sublevel sets of the graph of $T_{h\Delta}(\cdot)$ we would like to have and $h$ is the step size of the numerical scheme computing $Y_{h\Delta}(i\Delta t)$ starting from $Y_{h\Delta}((i-1)\Delta t)$. 

Due to \eqref{Rt} and \eqref{ReachLevel}, the description of each sublevel set of $T(\cdot)$ can be formulated only with its boundary points, i.e., the supporting points of the reachable sets at the corresponding time. For the discrete setting, at each step, we will determine the value of $T_{h\Delta}(x)$ for $x \in Y_{h\Delta}(\cdot)$. Therefore, we only store this information for constructing the graph of $T_{h\Delta}(\cdot)$ on the subset $[t_0,t_f]$  of its range.
\begin{algorithm}
\label{algorithm}~
\begin{enumerate}
\item[ step  1:] Set $Y_{h\Delta}(t_0)=\widetilde{\mathcal{S}}_{\Delta}$, $\mathcal{R}_{h\Delta}(t_0):= \mathcal{S}_{\Delta}$ as in \eqref{SUdelta}, $i=0$.
\item[ step  2:] Compute $\widetilde{Y}_{h\Delta}(t_{i+1})$ as follows
\begin{align*}
\widetilde{Y}_{h\Delta}(t_{i+1}) & =\Phi_h\big(t_{i+1},t_{i}\big)Y_{h\Delta}\big(t_{i}\big)+h\sum_{j=0}^{N}c_j\Phi_h(t_{i+1},t_{ij})\bar B(t_{ij}) \widetilde U_{\Delta}, \\
  \widetilde{\mathcal{R}}_{h\Delta}(t_{i+1})   & =   \co\big( \widetilde{Y}_{h\Delta}(t_{i+1}) \big), 
\end{align*}
where 
 \begin{align} \label{eq:time_steps}
    t_i & =t_0+i\Delta t,\ t_{ij}=t_i+jh \quad(j=0,1,\ldots,N).
 \end{align}
\item[ step   3:] Compute the set of the supporting points $ \bigcup_{l^k\in S_{\mathcal{R}}^{\Delta}} \bb{y(l^k,\widetilde{ \mathcal{R}}_{h\Delta}(t_{i+1}))}$  and set 
 \begin{align} \label{eq:comp_supp_pts_in_fixed_dir}
    Y_{h\Delta}(t_{i+1}) & =  \bigcup\limits_{l^k\in S_{\mathcal{R}}^{\Delta}} \big\{ y\big(l^k,\widetilde{ \mathcal{R}}_{h\Delta}(t_{i+1})\big)\big)  \big\}, 
 \end{align}
 where $ y(l^k,\widetilde{ \mathcal{R}}_{h\Delta}(t_{i+1}))$ is an arbitrary element of $\Y(l^k,\widetilde{ \mathcal{R}}_{h\Delta}(t_{i+1}))$ and set $$\mathcal{R}_{h\Delta}(t_{i+1}):=\co(Y_{h\Delta}(t_{i+1})).$$
\item[step  4:]  If $i<K-1$, set $i=i+1$ and go back to step 2. Otherwise, go to step 5.
\item[step  5:] Construct the graph of $T_{h\Delta}(\cdot)$ by 
the (piecewise) linear interpolation based on the values $t_i$ at the points $Y_{h\Delta}(t_i)$, $i=0,\ldots,K$.
\end{enumerate}
\end{algorithm}
The algorithm computes the set of vertices $Y_{h\Delta}(t_i)$ of the polygon 
$\mathcal{R}_{h\Delta}(t_i)$ which are supporting points in the directions $l^k \in
S_{\mathcal{R}}^{\Delta}$. 
The 
following proposition is the error estimate between the fully discrete reachable set $\mathcal{R}_{h\Delta}(\cdot)$ and   $\mathcal{R}(\cdot)$.  
\begin{proposition}\label{dHR_deltahl}
 Let Assumptions \ref{standassum}(i)--(iii), together with 
\begin{equation}\label{semiR}
\dH\Big(\mathcal{R}_{h}(t_i),\mathcal{R}(t_i )\Big)\le C_{s} h^p
\end{equation}
  for the set-valued combination method~\eqref{eq:sv_comb_meth_global} in (II), be valid. 
  Furthermore, finitely many directions $S_U^{\Delta},\,S_{\mathcal{R}}^{\Delta} \subset S_{n-1}$ are chosen
  with 
  $$\max(\dH(S_{n-1},S_U^{\Delta}),\dH(S_{n-1},S_{\mathcal{R}}^{\Delta}))\le C_{\Delta} h^p.$$
Then, for $h$ small enough, 
 \begin{equation}\label{dHRRdeltahlglobal}
\begin{aligned}
& \dH\Big(\mathcal{R}_{h\Delta}(t_i),\mathcal{R}_h(t_i )\Big)\le C_{f} h^p,\\
& \dH\Big(\mathcal{R}_{h\Delta}(t_i),\mathcal{R}(t_i)\Big)\le C_{f} h^p,
\end{aligned}
\end{equation}
where $C_s,\,C_{\Delta},\, C_f$ are some positive constants and  $t_i=t_0+i\Delta t,\,i=0,\ldots,K$.
\end{proposition}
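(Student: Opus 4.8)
The plan is to prove the first inequality in~\eqref{dHRRdeltahlglobal}, namely $\dH(\mathcal{R}_{h\Delta}(t_i),\mathcal{R}_h(t_i))\le C_f h^p$, and then to obtain the second one immediately from the triangle inequality and the hypothesis~\eqref{semiR}, since $\dH(\mathcal{R}_{h\Delta}(t_i),\mathcal{R}(t_i))\le\dH(\mathcal{R}_{h\Delta}(t_i),\mathcal{R}_h(t_i))+\dH(\mathcal{R}_h(t_i),\mathcal{R}(t_i))\le (C_f+C_s)h^p$ after relabelling the constant. For the first inequality I would argue by induction over the macro-steps $i=0,\ldots,K$ of Algorithm~\ref{algorithm}, comparing its recursion with the semi-discrete combination method. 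Writing the one-macro-step map as $F_i(Z,V):=\Phi_h(t_{i+1},t_i)Z+h\sum_{j=0}^N c_j\Phi_h(t_{i+1},t_{ij})\bar B(t_{ij})V$, the fact that $\co(\cdot)$ commutes with Minkowski addition and linear images shows $\widetilde{\mathcal{R}}_{h\Delta}(t_{i+1})=F_i(\mathcal{R}_{h\Delta}(t_i),U_\Delta)$, while the semi-discrete set satisfies $\mathcal{R}_h(t_{i+1})=F_i(\mathcal{R}_h(t_i),U)$, and $\mathcal{R}_{h\Delta}(t_{i+1})=\co(Y_{h\Delta}(t_{i+1}))=:\Pi_\Delta(\widetilde{\mathcal{R}}_{h\Delta}(t_{i+1}))$ is the inner polytope obtained in~\eqref{eq:comp_supp_pts_in_fixed_dir} from supporting points in the finite set $S_{\mathcal{R}}^\Delta$. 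Thus each step introduces exactly two perturbations relative to the semi-discrete scheme: the replacement of $U$ by the polytope $U_\Delta$ from~\eqref{SUdelta}, and the directional discretization $\Pi_\Delta$.

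The key estimates are then: (a) $F_i$ is Lipschitz in its first argument with constant $\norm{\Phi_h(t_{i+1},t_i)}$ and in its second with constant $L_i^V:=h\sum_{j=0}^N c_j\norm{\Phi_h(t_{i+1},t_{ij})\bar B(t_{ij})}$, both following from translation invariance and positive homogeneity of $\dH$ under Minkowski addition and linear images; (b) $\dH(U_\Delta,U)\le C_U h^p$ from~\eqref{SUdelta}; and (c) $\Pi_\Delta(W)\subset W$ with $\dH(\Pi_\Delta(W),W)\le\kappa\,\diam(W)\,C_\Delta h^p$, which is the content of~\cite[Proposition~3.4]{BBCG} for an $\Orem{h^p}$-net of directions (indeed, via the identity $\dH(W,\Pi_\Delta(W))=\max_{\norm{l}=1}|\delta^*(l,W)-\delta^*(l,\Pi_\Delta(W))|$ and a nearest-net-direction argument). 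Setting $e_i:=\dH(\mathcal{R}_{h\Delta}(t_i),\mathcal{R}_h(t_i))$ and inserting the intermediate set $F_i(\mathcal{R}_h(t_i),U_\Delta)$, the triangle inequality yields the recursion
\begin{equation*}
e_{i+1}\le \norm{\Phi_h(t_{i+1},t_i)}\,e_i + L_i^V C_U h^p + \kappa\,\diam\big(\widetilde{\mathcal{R}}_{h\Delta}(t_{i+1})\big)\,C_\Delta h^p,
\end{equation*}
with starting value $e_0=\dH(\mathcal{S}_\Delta,\mathcal{S})=\Orem{h^p}$ by~\eqref{SUdelta}.

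Before solving this I would establish a uniform diameter bound $\diam(\widetilde{\mathcal{R}}_{h\Delta}(t_{i+1}))\le D$, independent of $h$ and $i$: since $\mathcal{R}_{h\Delta}(t_i)\subset\widetilde{\mathcal{R}}_{h\Delta}(t_i)=F_i(\mathcal{R}_{h\Delta}(t_{i-1}),U_\Delta)$, the diameters obey their own linear recursion with the same coefficients $\norm{\Phi_h(t_{i+1},t_i)}$, $L_i^V$ and bounded data $\diam(U_\Delta)\le\diam(U)+\Orem{h^p}$, which a discrete Gronwall argument closes. Here I use that, by stability of the ODE solver, $\norm{\Phi_h(t_{i+1},t_i)}\le e^{M\Delta t}$ for a constant $M$ and $h$ small, so $\prod_{i=0}^{K-1}\norm{\Phi_h(t_{i+1},t_i)}\le e^{M(t_f-t_0)}=:\Lambda$, and that $\sum_i L_i^V$ is uniformly bounded (from the uniform boundedness of $\sum_j c_j\norm{B(t_j)U}$ assumed in Theorem~\ref{dHRRh}). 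With $D$ fixed, the same discrete Gronwall applied to the recursion for $e_i$ gives
\begin{equation*}
e_K\le \Big(\prod_{i=0}^{K-1}\norm{\Phi_h(t_{i+1},t_i)}\Big)e_0 + \sum_{i=0}^{K-1}\Big(\prod_{l=i+1}^{K-1}\norm{\Phi_h(t_{l+1},t_l)}\Big)\big(L_i^V C_U + \kappa D C_\Delta\big)h^p \le C_f h^p,
\end{equation*}
since $K=(t_f-t_0)/\Delta t$ is fixed as $h\to0$ (with $N\to\infty$), $e_0=\Orem{h^p}$, and all transition-matrix products together with $\sum_i L_i^V$ are bounded by constants depending only on $\Lambda$ and $t_f-t_0$.

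The main obstacle is the bookkeeping that keeps the accumulated error at the optimal order $\Orem{h^p}$ rather than degrading it. The directional discretization contributes one term of size $\kappa D C_\Delta h^p$ per macro-step, so one must exploit that the number $K$ of such steps stays fixed as $h\to 0$ and that the amplification factors $\norm{\Phi_h(t_{i+1},t_i)}$ multiply to the \emph{global} bound $\Lambda$ over the whole interval, instead of growing with the number of micro-steps. Secondary care is needed to make the diameter bound $D$ genuinely uniform in $h$ so that it may legitimately be inserted into the error recursion, and to justify for ``$h$ small enough'' that the $\Orem{h^p}$-net $S_{\mathcal{R}}^\Delta$ is fine enough for the estimate of~\cite[Proposition~3.4]{BBCG} to apply; both reduce to the uniform boundedness already invoked above.
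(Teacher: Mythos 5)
The paper does not prove this proposition inline; its ``proof'' is a citation to \cite[Proposition~7.2.5]{Le}, so a line-by-line comparison is not possible. Your argument is correct and reconstructs what is essentially the intended proof: a per-macro-step perturbation analysis in which the two error sources of Algorithm~\ref{algorithm} (replacing $U$ by the inner polytope $U_\Delta$, and the directional truncation estimate of \cite[Proposition~3.4]{BBCG}, which the paper itself points to for the spatial discretization) are propagated through the Lipschitz behaviour of Minkowski sums and linear images under $\dH$, closed by a discrete Gronwall bound over the \emph{fixed} number $K$ of macro-steps, with the second inequality in \eqref{dHRRdeltahlglobal} following from the first via the triangle inequality and \eqref{semiR}. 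The only caveat worth recording is that the stability ingredients you invoke --- boundedness of $\norm{\Phi_h(t_{i+1},t_i)}$ and of $\sum_i L_i^V$ --- are not literally contained in Assumptions~\ref{standassum}(i)--(iii) but come from the defining properties a), b) of method (II) and the standing hypotheses of Theorem~\ref{dHRRh}, which is a fair reading since \eqref{semiR} presupposes exactly that setting; you flag this correctly rather than leaving a gap.
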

\begin{proof}
  \emph{The proof can be found in~\cite[Proposition~7.2.5]{Le}.}
\end{proof}
\begin{remark}
If $\mathcal S$ is a singleton, we do not need to discretize the target set. 
The overall error estimate in~\eqref{dHRRdeltahlglobal} even improves in this case, since 
$\dH\big(\widetilde{\mathcal{R}}_{h\Delta}(t_0),\mathcal{R}_h(t_0)\big)=0$. 
\end{remark}
As we can see in this subsection the convexity of the reachable set plays a vital role. Therefore, this approach  can only be extended to special nonlinear  control systems 
with convex reachable sets.

In the following subsection, we provide the error estimation of $T_{h\Delta}(\cdot)$ obtained by the indicated approach under  Assumptions~\ref{standassum}, the regularity of $T(\cdot)$ and the properties of the numerical approximation.
\subsection{Error estimate of the minimum time function}
 After computing the fully discrete reachable sets  in Subsection~\ref{subsec:Algorithm},  we obtain the values  of $T_{h\Delta}(x)$ 
for all $x\in  \bigcup_{i=0,\ldots,K}  Y_{h\Delta}(t_i)$, $t_i= t_0 +  i\Delta t$. 
For all boundary points $x \in \partial \mathcal{R}_{h\Delta}(t_i)$ and some $i=1,\ldots,K$, we define
 \begin{align}
    T_{h\Delta}(x) & = t_i \text{ for } 
                                       x\in \partial  \mathcal{R}_{h\Delta}(t_i) ,
                                  \label{eq:discr_min_time_bd_pt}
    \intertext{together with the initial condition}
    T_{h\Delta}(x) & = t_0 \ \, \text{ for }  x \in \mathcal{S}_{\Delta}. \nonumber
 \end{align}
 The task is now to define a suitable value of $T_{h\Delta}(x)$ 
 in the computational domain
 $$ 
      \Omega := \bigcup_{i=0,\ldots,K} \mathcal{R}_{h\Delta}(t_i),
 $$ 
 if $x$ is neither a boundary point of reachable sets nor lies inside the target set.
 First we construct a simplicial triangulation 
 $\bb{\Gamma_j}_{j=1,\ldots,M}$ 
 over the set 
 $\Omega \setminus \inter(\mathcal{S})$ of points
 with grid nodes in $ \bigcup_{i=0,\ldots,K} Y_{h\Delta}(t_i) $.
 Hence, 
  \begin{itemize}
     \item $\Gamma_j \subset \R^n$ is a simplex for $j=1,\ldots,M$,
     \item $\Omega \setminus \inter(\mathcal{S}) = \bigcup_{j=1,\ldots,M}\Gamma_j$,
     \item the intersection of two different simplices is either empty
           or a common face,
     \item all supporting points in the sets $\bb{Y_{h\Delta}(t_i)}_{i=0,\ldots,K}$
           are vertices of some simplex,
     \item all the vertices of each simplex have to belong either to 
           the fully discrete reachable set
           $\mathcal{R}_{h\Delta}(t_i)$ or to $\mathcal{R}_{h\Delta}(t_{i+1})$ for some $i=0,1,\ldots,K-1$.
  \end{itemize}
 For the triangulation as in Figure~\ref{fig:part_triang},
 we introduce the maximal diameter of simplices as
 $$\Delta_{\Gamma}:= \max_{j=1,\ldots,M} \diam(\Gamma_j) .$$
\begin{figure}[htp]
\begin{center}
\includegraphics[scale=0.25]{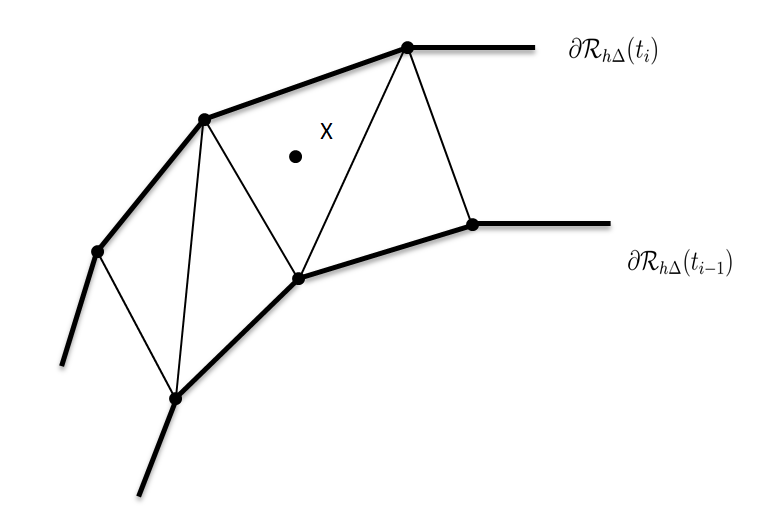}
\end{center}
\caption{Part of the triangulation}
\label{fig:part_triang}
\end{figure}\\
 Assume that $x$ is neither a boundary point of one of the computed discrete
     reachable sets $\bb{\mathcal{R}_{h\Delta}(t_i)}_{i=0,\ldots,K}$ nor an element of
     the target set $\mathcal{S}$ and
 let $ \Gamma_j $ be the simplex containing $x$.
 Then 
  \begin{align} \label{eq:cpa_def}
     T_{h\Delta}(x) & =\sum_{ \nu =1}^{ n+1 }\lambda_{ \nu } T_{h\Delta}(x_{ \nu} ),
  \end{align}
 where $x=\sum_{ \nu =1}^{ n+1 }\lambda_{ \nu } x_{ \nu },\,\sum_{ \nu =1}^{\ n+1 }\lambda_{ \nu }=1$  with  $\lambda_{ \nu }\ge 0$
 and $\bb{x_{ \nu }}_{ \nu =1, \ldots,n+1 }$  being the vertices  of $ \Gamma_j $.
  
 If $x$ lies in the interior of $\Gamma_j$, the index $j$ of this simplex is unique.
 Otherwise, $x$ lies on the common face of two or more simplices due to our
 assumptions on the simplicial triangulation and~\eqref{eq:cpa_def} is well-defined. Let $i$ be the index such that $\Gamma_j \in   \mathcal{R}_{h\Delta}(t_i) \setminus  \inter(\mathcal{R}_{h\Delta}(t_{i-1}))$.

 Since $T_{h\Delta}(x_\nu)$ is either $t_i$ or $t_{i-1}$ due to~\eqref{eq:discr_min_time_bd_pt}, we have
  \begin{align*}
     T_{h\Delta}(x) & = \sum_{\nu=1}^{n+1}\lambda_\nu T_{h\Delta}(x_\nu) \le t_i, \\
     \partial \mathcal{R}_{h\Delta}(t_i) & = \bb{y \in \R^n: T_{h\Delta}(y) = t_i}.
  \end{align*}
 The latter holds, since the convex combination is bounded by $t_i$ and equality to $t_i$
 only holds, if all vertices with positive coefficient $\lambda_\nu$ lie on the
 boundary of the reachable set $\mathcal{R}_{h\Delta}(t_i)$.

The following theorem is about the error estimate of the minimum time function obtained by this approach.
\begin{theorem}\label{errT}
Assume that $T(\cdot)$ is continuous with a non-decreasing modulus $\omega(\cdot)$ in $\mathcal{R}$, i.e.,
\begin{equation}
  | T(x)-T(y)|\le \omega(\norm{x-y})\,\,\text{ for all } x,y \in \mathcal{R}.
\end{equation}
Let Assumptions~\ref{standassum} be fulfilled, furthermore assume that
\begin{equation}\label{dHRRhlll}
\dH(\mathcal{R}_{h\Delta}(t_i),\mathcal{R}(t_i))\le Ch^p
  \quad \text{for $i=1,\ldots,K$}
\end{equation}
holds.  Then
\begin{equation}\label{fullestT}
\norm{T-T_{h\Delta}}_{\infty,\, \Omega  }\le \omega( \Delta_{\Gamma} )+ \omega(Ch^p).
\end{equation}
where $\norm{\cdot}_{\infty,\, \Omega  }$ is the supremum norm taken over $ \Omega  $.
\end{theorem}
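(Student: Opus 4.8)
The plan is to compare $T_{h\Delta}$ with $T$ through an auxiliary piecewise linear function $\widehat{T}$ that interpolates the \emph{exact} values $T(x_\nu)$ at the grid nodes $x_\nu\in\bigcup_{i}Y_{h\Delta}(t_i)$, i.e.\ $\widehat{T}(x)=\sum_{\nu=1}^{n+1}\lambda_\nu T(x_\nu)$ on the simplex $\Gamma_j$ containing $x$, using the same barycentric weights $\lambda_\nu$ as in~\eqref{eq:cpa_def}. For every $x\in\Omega$ the triangle inequality
\begin{equation*}
|T(x)-T_{h\Delta}(x)|\le |T(x)-\widehat{T}(x)|+|\widehat{T}(x)-T_{h\Delta}(x)|
\end{equation*}
then splits the error into a pure interpolation error, to be bounded by $\omega(\Delta_\Gamma)$, and a nodal error, to be bounded by $\omega(Ch^p)$. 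Since the right-hand side of~\eqref{fullestT} is a \emph{sum} of two $\omega$-terms, no subadditivity of $\omega$ will be required.

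First I would treat the interpolation error. As $\sum_\nu\lambda_\nu=1$ with $\lambda_\nu\ge 0$ and both $x$ and all vertices $x_\nu$ lie in the same simplex $\Gamma_j$, one has $\|x-x_\nu\|\le\diam(\Gamma_j)\le\Delta_\Gamma$, so that
\begin{equation*}
|T(x)-\widehat{T}(x)|=\Big|\sum_{\nu=1}^{n+1}\lambda_\nu\big(T(x)-T(x_\nu)\big)\Big|\le\sum_{\nu=1}^{n+1}\lambda_\nu\,\omega(\|x-x_\nu\|)\le\omega(\Delta_\Gamma),
\end{equation*}
using the assumed modulus of continuity of $T$ and the monotonicity of $\omega$. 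This is the classical piecewise-linear interpolation estimate and needs nothing beyond the continuity of $T$.

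The crux is the nodal error. Writing $\widehat{T}(x)-T_{h\Delta}(x)=\sum_\nu\lambda_\nu\big(T(x_\nu)-T_{h\Delta}(x_\nu)\big)$, everything reduces to the single-node estimate $|T(x_\nu)-T_{h\Delta}(x_\nu)|\le\omega(Ch^p)$. By construction~\eqref{eq:discr_min_time_bd_pt} a node $x_\nu$ lies on $\partial\mathcal{R}_{h\Delta}(t_j)$ for some $j$ and carries the value $T_{h\Delta}(x_\nu)=t_j$. The geometric fact I would establish is that Hausdorff closeness of the full sets transfers to their boundaries, namely $\dH(\partial\mathcal{R}_{h\Delta}(t_j),\partial\mathcal{R}(t_j))\le\dH(\mathcal{R}_{h\Delta}(t_j),\mathcal{R}(t_j))\le Ch^p$ by~\eqref{dHRRhlll}. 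Then I pick $\bar x_\nu\in\partial\mathcal{R}(t_j)$ with $\|x_\nu-\bar x_\nu\|\le Ch^p$; Proposition~\ref{prop:bd_descr_monotone_case_w_level_set} gives $T(\bar x_\nu)=t_j$, whence $|T(x_\nu)-t_j|=|T(x_\nu)-T(\bar x_\nu)|\le\omega(Ch^p)$. Summing with the weights $\lambda_\nu$ yields $|\widehat{T}(x)-T_{h\Delta}(x)|\le\omega(Ch^p)$, and adding the two bounds gives~\eqref{fullestT}.

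The step I expect to be the main obstacle is the boundary transfer $\dH(\partial A,\partial B)\le\dH(A,B)$ for the convex bodies $A=\mathcal{R}_{h\Delta}(t_j)$ and $B=\mathcal{R}(t_j)$. I would prove it by a supporting-hyperplane argument: given $x\in\partial A$ with outer unit normal $l$, either $x\notin B$ and its metric projection onto $B$ is a boundary point of $B$ at distance $\le\dH(A,B)$, or $x\in B$ and following the ray $x+tl$ until it leaves $B$ produces a boundary point at distance $t^\ast\le\delta^*(l,B)-\scal{l}{x}=\delta^*(l,B)-\delta^*(l,A)\le\dH(A,B)$. This argument, together with the applicability of Proposition~\ref{prop:bd_descr_monotone_case_w_level_set}, relies on $\mathcal{R}(t_j)$ having nonempty interior, which is guaranteed by the strict expanding property in Assumption~\ref{standassum}(iv) for $t_j>t_0$; the degenerate node value $t_0$ on $\mathcal{S}_\Delta$ is handled directly, since $T\equiv t_0$ on $\mathcal{S}$.
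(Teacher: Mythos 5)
Your proof is correct and follows essentially the same route as the paper: the paper's own argument splits into the same two pieces (its case~2 estimate is exactly your interpolant decomposition, interpolation error $\omega(\Delta_\Gamma)$ plus nodal error $\omega(Ch^p)$), and its nodal estimate likewise transfers Hausdorff closeness of the sets to their boundaries and then applies Proposition~\ref{prop:bd_descr_monotone_case_w_level_set} to identify $T(\bar x_\nu)=t_j$. The only difference is that where you prove the boundary-transfer inequality $\dH(\partial A,\partial B)\le \dH(A,B)$ by a supporting-hyperplane argument, the paper simply cites the equality $\dH(\partial A,\partial B)=\dH(A,B)$ for convex compact sets from~\cite{Wil}.
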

\begin{proof}
We divide the proof into two cases.
\begin{enumerate}
\item[case 1:] $x\in \partial \mathcal{R}_{h\Delta}(t_i)$ for some $i=1,\ldots,K$. \\
Let us choose a best approximation 
$\bar{x} \in \partial\mathcal{R}(t_i)$ of $x$ so that
$$\norm{x-\bar x} = \di(x,\partial \mathcal{R}(t_i)) \leq \dH(\partial \mathcal{R}_{h\Delta}(t_i),\partial \mathcal{R}(t_i))
  =  \dH(\mathcal{R}_{h\Delta}(t_i),\mathcal{R}(t_i)),$$
where we used \cite{Wil} in the latter equality. Clearly, \eqref{ReachLevel}, 
\eqref{eq:cpa_def} show that
$$
  T_{h\Delta}(x)=T(\bar{x})=t_i.
 $$
 Then
\begin{align}
|T(x)-T_{h\Delta}(x)|&\le |T(x)-T(\bar{x})|+|T(\bar{x})-T_{h\Delta}(x)| \nonumber \\
& \le \omega(\norm{x-\bar{x}}) \le \omega\big(\dH(\mathcal{R}_{h\Delta}(t_i),\mathcal{R}(t_i))\big)\le \omega(Ch^p) \label{eq:estim_T} 
\end{align} 
due to \eqref{dHRRhlll}.

\item[ case 2:]
$x\in \inter\big(\mathcal{R}_{h\Delta}(t_i)\big)\setminus \mathcal{R}_{h\Delta}(t_{i-1})$
 for some $i=1,\ldots,K$. \\
Let $ \Gamma_j $ be a simplex containing $x$ with 
 the set of  vertices  $\bb{x_j}_{j=1,\ldots, n+1 }$. Then $$T_{h\Delta}(x)=\sum_{j=1}^{ n+1 }\lambda_j T_{h\Delta}(x_j),$$ where $x=\sum_{j=1}^{ n+1 }\lambda_j x_j,\,\sum_{j=1}^{ n+1 } \lambda_j=1, \lambda_j\ge 0$. 
 We obtain
\begin{align*}
&|T(x)-T_{h\Delta}(x)|= |T(x)-\sum_{j=1}^{ n+1 }\lambda_j T_{h\Delta}(x_j)|\\
&\le |T(x)-\sum_{j=1}^{ n+1 }\lambda_j T( x_j)|+|\sum_{j=1}^{ n+1 }\lambda_j T(x_j)-\sum_{j=1}^{ n+1 }\lambda_j T_{h\Delta}(x_j)| \\
&  \le \sum_{j=1}^{\ n+1 }\lambda_j \bigg( |T(x)-T( x_j)|+ |T(x_j) - T_{h\Delta}(x_j)| \bigg)   \le \omega(\Delta_\Gamma)+  \omega(Ch^p),
\end{align*}
where we applied the continuity of $T(\cdot)$ for the first term and the error estimate~\eqref{eq:estim_T} of case~1 for the other.
\end{enumerate}
Combining two cases and noticing that $T(x)=T_{h\Delta}(x)=t_0$ if $x\in \mathcal{S}_{\Delta}$, we get
\begin{equation}
\norm{T-T_{h\Delta}}_{\infty,\,\Omega}
:= \max_{x \in \Omega} |T(x) -T_{h\Delta}(x)|
\le \omega(\Delta_{\Gamma})+ \omega(Ch^p).
\end{equation}
The proof is completed.
\end{proof}
\begin{remark}\label{Rem_errT}
Theorem  \ref{dHRRh}  provides sufficient conditions for set-valued combination methods such that \eqref{dHRRhlll} holds. 
 See also  e.g.,~\cite{DF} for set-valued Euler's method resp. ~\cite{V} for Heun's method.
If the minimum time function is H\"older continuous  on $\Omega $, \eqref{fullestT} becomes 
\begin{equation}\label{errorHolder}
\norm{T-T_{h\Delta}}_{\infty,\, \Omega  }\le   C\Big((\Delta_{\Gamma} )^{\frac{1}{k}}+  h^{\frac{p}{k}}\Big)
\end{equation}
for some positive constant $C$. The inequality \eqref{errorHolder} shows that the error 
estimate is improved in comparison with the one obtained in~\cite{CL} 
and does not assume explicitly the regularity of optimal solutions as in~\cite{BF2}.
One possibility to define the modulus of continuity satisfying the required  property of non-decrease in Theorem~\ref{errT} is as follows:
\begin{equation*}
\omega(\delta) = \sup \{ \lvert T(x) - T(y)\rvert : \norm{x-y} \le \delta\}.
\end{equation*}
An advantage of the methods of Volterra type studied in~\cite{CL} which benefit from
non-standard selection strategies is that the discrete reachable sets converge with higher 
order than 2. The order 2 is an order barrier for set-valued Runge-Kutta methods 
with piecewise constant controls or independent choices of controls, 
since many linear control problems with intervals or boxes for the control values
are not regular enough for higher order approximations (see~\cite{V}).
Moreover, notice that there are many different triangulations based on the same data. Among them, we can always choose the one with a smaller diameter close to the Hausdorff distance of the two sets by applying standard grid generators.
\end{remark}
\begin{proposition}\label{errTt2}
Let the conditions of Theorem~\ref{errT} be fulfilled.
Furthermore assume that the step size $h$ is so small such that $C h^p$ in~\eqref{dHRRhlll} 
is smaller than $\frac{\varepsilon}{3}$, where
 \begin{align}\label{inclusionR}
    \mathcal{R}(t_i) + \varepsilon B_1(0)
      & \subset \inter \mathcal{R}(t_{i+1}) \quad\mbox{for all $i=0,\ldots,K-1$}.
 \end{align}
Then
 \begin{align}\label{inclusionRh}
     \mathcal{R}_{h\Delta}(t_i) + \frac{\varepsilon}{3} B_1(0)
       & \subset \inter \mathcal{R}_{h\Delta}(t_{i+1})
 \end{align}
and
\begin{equation}\label{fullestTt2}
\norm{T-T_{h\Delta}}_{\infty,\,  \Omega  }\le 2 \Delta t,
\end{equation}
where $\norm{\cdot}_{\infty,\, \Omega  }$ is the supremum norm taken over $ \Omega $.
\end{proposition}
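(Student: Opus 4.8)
The plan is to establish the two assertions in sequence: the discrete strict--expanding inclusion~\eqref{inclusionRh} comes first, since it supplies the monotone ``band'' structure on which the error bound~\eqref{fullestTt2} rests.

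\emph{Step 1 (discrete expanding inclusion).} I would start from the Hausdorff estimate~\eqref{dHRRhlll}, which gives both $\mathcal{R}_{h\Delta}(t_i)\subset \mathcal{R}(t_i)+Ch^pB_1(0)$ and $\mathcal{R}(t_{i+1})\subset \mathcal{R}_{h\Delta}(t_{i+1})+Ch^pB_1(0)$. Adding $\tfrac{\varepsilon}{3}B_1(0)$ to the first inclusion and using $Ch^p<\tfrac{\varepsilon}{3}$ together with the continuous inclusion~\eqref{inclusionR}, one obtains, with the strict slack $\rho:=\tfrac{2\varepsilon}{3}-Ch^p>0$,
\[
   \mathcal{R}_{h\Delta}(t_i)+\tfrac{\varepsilon}{3}B_1(0)+\rho B_1(0)
     \subset \mathcal{R}(t_i)+\varepsilon B_1(0)
     \subset \inter \mathcal{R}(t_{i+1})
     \subset \mathcal{R}_{h\Delta}(t_{i+1})+Ch^pB_1(0).
\]
Since $\rho>Ch^p$, I would split $\rho B_1(0)=Ch^pB_1(0)+(\rho-Ch^p)B_1(0)$ and cancel the common summand $Ch^pB_1(0)$ by the order cancellation law~\cite[Theorem~3.2.1]{PalUrb}, valid because $\mathcal{R}_{h\Delta}(t_{i+1})$ is convex and all sets are compact. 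This leaves $\mathcal{R}_{h\Delta}(t_i)+\tfrac{\varepsilon}{3}B_1(0)+(\rho-Ch^p)B_1(0)\subset \mathcal{R}_{h\Delta}(t_{i+1})$, and the positive radius $\rho-Ch^p>0$ forces the inclusion into the interior, which is~\eqref{inclusionRh}.

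\emph{Step 2 (sandwich inclusions and band location).} From Step~1 the discrete sets are strictly nested, so $\Omega=\mathcal{R}_{h\Delta}(t_K)$ and every $x\in\Omega\setminus\mathcal{S}_{\Delta}$ lies in a simplex $\Gamma_j\subset \mathcal{R}_{h\Delta}(t_i)\setminus\inter\mathcal{R}_{h\Delta}(t_{i-1})$ for a unique band index $i$; by~\eqref{eq:discr_min_time_bd_pt} and~\eqref{eq:cpa_def} this already yields $T_{h\Delta}(x)\in[t_{i-1},t_i]$. The heart of the argument is the pair of mixed discrete--continuous inclusions
\[
   \mathcal{R}(t_{j-1})\subset\inter \mathcal{R}_{h\Delta}(t_j),\qquad
   \mathcal{R}_{h\Delta}(t_{j-1})\subset\inter \mathcal{R}(t_j)
     \qquad(1\le j\le K),
\]
each proved by inserting a $Ch^p$-ball (from~\eqref{dHRRhlll}), enlarging to radius $\tfrac{\varepsilon}{3}$, and absorbing the enlargement into the interior of the next set via~\eqref{inclusionRh} respectively~\eqref{inclusionR}.

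\emph{Step 3 (pointwise estimate).} Using the identity $\mathcal{R}(t)=\{x\colon T(x)\le t\}$ (cf.~\eqref{eq:reach_leq_sub_level_set} and Remark~\ref{rem:strict_expand}) and the second sandwich inclusion with $j=i+1$, membership $x\in\mathcal{R}_{h\Delta}(t_i)$ gives $T(x)\le t_{i+1}$; the first sandwich inclusion with $j=i-1$ shows $\mathcal{R}(t_{i-2})\subset\inter\mathcal{R}_{h\Delta}(t_{i-1})$, so $x\notin\inter\mathcal{R}_{h\Delta}(t_{i-1})$ gives $x\notin\mathcal{R}(t_{i-2})$, i.e.\ $T(x)>t_{i-2}$. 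Hence $T(x)\in(t_{i-2},t_{i+1}]$ while $T_{h\Delta}(x)\in[t_{i-1},t_i]$, so $|T(x)-T_{h\Delta}(x)|\le 2\Delta t$; the band $i=1$ reduces to the trivial bound $T\ge t_0$, and on $\mathcal{S}_{\Delta}$ one has $T=T_{h\Delta}=t_0$, giving~\eqref{fullestTt2}. I expect the main obstacle to be the outermost band $i=K$: the upper bound $T(x)\le t_{i+1}$ needs the continuous inclusion~\eqref{inclusionR}, which is assumed only for $i\le K-1$, so for $x$ in the thin shell where $\mathcal{R}_{h\Delta}(t_K)$ protrudes beyond $\mathcal{R}(t_K)$ there is no computed set $\mathcal{R}(t_{K+1})$ to absorb it. I would resolve this by observing that $\mathcal{R}(\cdot)$ is defined and, under the standing hypotheses and the sufficient conditions of Remark~\ref{rem:inclusion}, strictly expanding on an interval slightly larger than $[t_0,t_f]$, so that $\mathcal{R}(t_K)+\varepsilon B_1(0)\subset\inter\mathcal{R}(t_{K+1})$ still holds and the band-$K$ estimate becomes identical to the interior ones; alternatively, one takes a best approximation $\bar x\in\mathcal{R}(t_K)$ with $\norm{x-\bar x}\le Ch^p$ and uses the modulus $\omega$ to get $T(x)\le t_K+\omega(Ch^p)$, which suffices once $h$ is small enough that $\omega(Ch^p)\le\Delta t$.
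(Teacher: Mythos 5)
Your proposal is correct and follows essentially the same route as the paper's proof: the order cancellation law of \cite[Theorem~3.2.1]{PalUrb} combined with the Hausdorff estimate \eqref{dHRRhlll} yields \eqref{inclusionRh}, and the mixed discrete--continuous inclusions $\mathcal{R}(t_{j-1})\subset\inter\mathcal{R}_{h\Delta}(t_j)$, $\mathcal{R}_{h\Delta}(t_{j-1})\subset\inter\mathcal{R}(t_j)$ (which the paper derives inline inside its contradiction arguments) trap $T(x)$ in $(t_{i-2},t_{i+1}]$ and $T_{h\Delta}(x)$ in $[t_{i-1},t_i]$, giving the $2\Delta t$ bound exactly as in the paper's cases 1) and 2). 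Your explicit handling of the outermost band $i=K$ is in fact more careful than the paper's own proof, which needs \eqref{inclusionR} for $i=K$ there but quietly restricts its upper-bound argument to $i<K-1$; either of your two patches (strict expansion on a slightly larger interval, or the modulus-of-continuity bound $T(x)\le t_K+\omega(Ch^p)$ on the $Ch^p$-shell) legitimately closes that gap.
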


\begin{proof}

For some $i = 0,\ldots,K-1$ we choose a constant $M_{i+1} > 0$ such that 
$\mathcal{R}(t_{i+1}) \subset M_{i+1} B_1(0)$.
Since $\mathcal{R}(t_i)$ does not intersect the complement of
$ \inter \mathcal{R}(t_{i+1}) $ bounded with $M_{i+1} B_1(0)$ and both are compact sets, 
there exists $\varepsilon > 0$ such that
 \begin{align}
    \mathcal{R}(t_i) + \varepsilon B_1(0) 
      & \subset \inter \mathcal{R}(t_{i+1}) \subset M_{i+1} B_1(0). \label{eq:eps_incl}
 \end{align}
We will show that a similar inclusion as~\eqref{eq:eps_incl} holds for the 
discrete reachable sets for small step sizes. If the step size $h$ is so small
that $C h^p$ in~\eqref{dHRRhlll} is smaller than $\frac{\varepsilon}{3}$, then we have
the following inclusions:
 \begin{align}
     \inter \mathcal{R}(t_{i+1}) 
       & \subset \inter \big( \mathcal{R}_{h\Delta}(t_{i+1}) + C h^p B_1(0) \big)
         = \inter \mathcal{R}_{h\Delta}(t_{i+1}) + C h^p \inter B_1(0),\nonumber \\
     \mathcal{R}(t_i) + \varepsilon B_1(0)
       & \subset \inter \mathcal{R}(t_{i+1}) 
         \subset \inter \mathcal{R}_{h\Delta}(t_{i+1}) + \frac{\varepsilon}{3} B_1(0). \nonumber
     \intertext{By the order cancellation law of convex compact sets in~\cite[Theorem~3.2.1]{PalUrb}}
     \mathcal{R}(t_i) + \frac{2}{3}\varepsilon B_1(0)
       & \subset \inter \mathcal{R}_{h\Delta}(t_{i+1}), \nonumber \\
     \mathcal{R}_{h\Delta}(t_i) + \frac{\varepsilon}{3} B_1(0)
       & \subset \big( \mathcal{R}(t_i) + \frac{\varepsilon}{3} B_1(0) \big)
         + \frac{\varepsilon}{3} B_1(0)
         \subset \inter \mathcal{R}_{h\Delta}(t_{i+1}) \label{est2levfull}.
 \end{align}
We have
\begin{equation}\label{estcase2b}
|T(x)-T_{h\Delta}(x)|= \sum_{j=1}^{ n+1 }\lambda_j|T(x)- T_{h\Delta}(x_j)|.
\end{equation}
In order to obtain the estimate, we observe that
\begin{enumerate}
\item[1)]   $x_j \in \partial \mathcal{R}_{h\Delta}(t_i)$, then 
$t_\nu \le T(x_j)\le t_{i+1}$ with $\nu=\max\{0, i-1\}$.
\item[2)]    $x\in \inter(\mathcal{R}_{h\Delta}(t_i))\setminus \mathcal{R}_{h\Delta}(t_{i-1})$,
then $t_\nu < T(x)\le t_{i+1}$ with $\nu=\max\{0, i-2\}$.
\end{enumerate}
To prove 1) the inequality
$T(x_j) >= t_0$ is clear. Assume that $T(x_j) < t_{i-1}$ for some $i > 1$. Then $x_j
\in \mathcal{R}(t_{i-1})$. By the estimates~\eqref{dHRRhlll},~\eqref{est2levfull} and $C h^p<\frac{\varepsilon}{3}$, it follows that
 \begin{align*}
    x_j & \in \mathcal{R}_{h\Delta}(t_{i-1}) + C h^p B_1(0) \subset \inter \mathcal{R}_{h\Delta}(t_i)
 \end{align*}
which is a contradiction to the assumption $x_j \in \partial \mathcal{R}_{h\Delta}(t_i)$. Hence, $T(x_j) \geq t_{i-1}$. Assume that $T(x_j) > t_{i+1}$. Then, $x_j \notin \mathcal{R}(t_{i+1})$. Furthermore, $x_j$ cannot be an element of $\mathcal{R}_{h\Delta}(t_i)$, since otherwise
a contradiction to $x_j \notin \mathcal{R}(t_{i+1})$ follows:
 \begin{align*}
    x_j & \in \mathcal{R}_{h\Delta}(t_i) \subset \mathcal{R}(t_i) + C h^p B_1(0)
      \subset \inter \mathcal{R}(t_{i+1}).
 \end{align*}
Therefore, $x_j \notin \mathcal{R}_{h\Delta}(t_i)$  which
contradicts $x_j \in \partial \mathcal{R}_{h\Delta}(t_i)$. Hence, the starting assumption $T(x_j) > t_{i+1}$ must be wrong which proves 
$T(x_j) \leq t_{i+1}$.  \\
To prove 2) if we assume $T(x) \leq t_{i-2}$ for some $i \geq 2$, then $x \in \mathcal{R}(t_{i-2})$
and 
 \begin{align*}
    x & \in \mathcal{R}_{h\Delta}(t_{i-2}) + C h^p B_1(0) \subset \inter \mathcal{R}_{h\Delta}(t_{i-1})
 \end{align*}
by estimate~\eqref{dHRRhlll}.
But this contradicts $x \notin \mathcal{R}_{h\Delta}(t_{i-1})$. Therefore, $T(x) > t_{i-2}$.
 
Assuming $T(x) > t_{i+1}$ for some $i < K-1$, then $x \notin \mathcal{R}(t_{i+1})$.
Furthermore, if $x$ is an element of $\mathcal{R}_{h\Delta}(t_i)$,
 \begin{align*}
    x & \in \mathcal{R}_{h\Delta}(t_i) \subset \mathcal{R}(t_i) + C h^p B_1(0)
      \subset \inter \mathcal{R}(t_{i+1})
 \end{align*}
which is a contradiction to $x \notin \mathcal{R}(t_{i+1})$. \\
Therefore, $x \notin \mathcal{R}_{h\Delta}(t_i)$  which
contradicts $x\in \inter(\mathcal{R}_{h\Delta}(t_i))\setminus \mathcal{R}_{h\Delta}(t_{i-1})$.
Hence, the starting assumption $T(x) > t_{i+1}$ must be wrong which proves 
$T(x) \leq t_{i+1}$. Consequently, 1) and 2) are proved. Notice that 
\begin{enumerate}
\item[a)]  the case 1) means
       \begin{alignat*}{2}
          T(x_j) & \in  [t_{i-1}, t_{i+1}] & \quad & (i \geq 1), \\
          T(x_j) &  =t_0 & \quad & (i = 0)
        \end{alignat*}
and $| T(x_j) - T_{h\Delta}(x_j) |  \leq \Delta t $ due to $ T_{h\Delta}(x_j)=t_i,\,i=0,\ldots,K$.
\item[b)] from the case 2), we obtain 
 \begin{align*}
           T(x ) & \in  (t_{i-2}, t_{i+1}] \quad (i \geq 2), \\
           T_{h\Delta}(x_j)& - T(x)   < t_i - t_{i-2} = 2 \Delta t, \\
    T_{h\Delta}(x_j) & - T(x)   > t_{i-1} - t_{i+1} = -2 \Delta t. 
 \end{align*}
Therefore, $    | T(x)  - T_{h\Delta}(x_j) |  \leq 2 \Delta t$
for $i \geq 2$ (similarly with estimates  for $i=0,1$).
\end{enumerate}
Altogether, \eqref{fullestTt2} is proved.
\end{proof}
\section{Convergence and reconstruction of discrete optimal controls}%
\label{sec:converg}
In this sub\-section we first prove the convergence of the normal cones of $\mathcal{R}_{h\Delta}(\cdot)$ to the ones of the continuous-time reachable set $\mathcal{R}(\cdot)$ in an appropriate sense. Using this result we will be able to reconstruct discrete optimal trajectories to reach the target from a set of given points and also derive the proof of $L^1$-convergence of discrete optimal controls.
 In the following only convergence under weaker assumptions and no convergence order 1 as in~\cite{ABGL-13} 
are proved (see more references 
therein for the classical field of direct discretization methods). 
We also restrict to linear minimum time problems.\\
Some basic notions of nonsmooth and variational analysis which are needed in constructing and proving the convergence of controls can be found in \cite{CLSW,Rockaf}. 
	Let $A$ be a subset in $\R^n$ and $f: A \rightarrow \R \cup \{\infty\}$ be a function. The \emph{indicator function} of $A$ and the \emph{epigraph} of $f$ be defined as 
	\begin{equation}
	\begin{aligned}
	I_A(x)=
	\begin{cases}
	0 & \quad \text{ if } x\in A\\
	+\infty & \quad \text{ otherwise}
	\end{cases}, \quad \epi f =\bb{(x,r) \in \R^n \times \R \colon x\in A, \ r\ge f(x)}.
	\end{aligned}
	\end{equation}
The definitions of normal cone and subdifferential in convex case are taken from \mbox{\cite[Sec.~8.C]{Rockaf}}.
With reference to \mbox{\cite[Definition 7.1]{Rockaf}} for epi-convergence and  \cite[Definition 5.32]{Rockaf} for graphical convergence, let us recall Attouch's theorem in a reduced version which plays an important role for convergence results of discrete optimal controls and solutions.
\begin{theorem}[see~\mbox{\cite[Theorem~12.35]{Rockaf}}] \label{theo:attouch}
   Let $(f^i)_i$ and $f$ be lower semicontinuous, convex, proper functions from $\R^n$ 
   to $\R \cup \{\infty\}$. \\
   Then the epi-convergence of $(f^i)_{i \in \N}$ to $f$ is equivalent to the graphical convergence
   of the subdifferential maps $(\partial f^i)_{i \in \N}$ to $\partial f$.
\end{theorem}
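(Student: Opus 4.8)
The plan is to route the equivalence through the Moreau envelope and the proximal mapping, exploiting that for a convex function the proximal mapping coincides with the resolvent of the subdifferential. For $\lambda > 0$ write the Moreau envelope $e_\lambda f(x) = \min_{w \in \R^n}\big( f(w) + \frac{1}{2\lambda}\norm{x-w}^2 \big)$ and let $\operatorname{prox}_{\lambda f}(x)$ denote its unique minimizer. Since $f$ is convex, proper, and lower semicontinuous, $e_\lambda f$ is finite, convex, and $C^1$ on all of $\R^n$, the map $\operatorname{prox}_{\lambda f}$ is single-valued and firmly nonexpansive, and one has the two identities $\operatorname{prox}_{\lambda f} = (I + \lambda \partial f)^{-1}$ and $\nabla e_\lambda f = \frac{1}{\lambda}\big( I - \operatorname{prox}_{\lambda f}\big)$. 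I would then prove the theorem by establishing two separate equivalences and observing that the quantity bridging them is the same. These are (a) epi-convergence of $(f^i)_i$ to $f$ is equivalent to pointwise convergence $e_\lambda f^i \to e_\lambda f$ (for each, equivalently for one, $\lambda > 0$), and (b) graphical convergence of $(\partial f^i)_i$ to $\partial f$ is equivalent to pointwise convergence $\operatorname{prox}_{\lambda f^i} \to \operatorname{prox}_{\lambda f}$.

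For (a), I would use that $e_\lambda f$ is the optimal value of a parametrized minimization whose objective is the sum of the epi-convergent family $f^i$ and a fixed continuous, coercive quadratic; the standard convergence-of-infima results for epi-convergence, combined with the level-coercivity supplied by the quadratic term, give pointwise convergence of the envelopes and, as a by-product, convergence of the minimizers, i.e.\ of $\operatorname{prox}_{\lambda f^i}$. Conversely, because $e_\lambda f \nearrow f$ pointwise as $\lambda \downarrow 0$ and the epigraphs of the envelopes approximate that of $f$, pointwise convergence of the envelopes for all small $\lambda$ recovers the epi-convergence of the $f^i$. The passage from pointwise convergence of the convex $C^1$ functions $e_\lambda f^i$ to convergence of their gradients $\nabla e_\lambda f^i \to \nabla e_\lambda f$ is automatic, since pointwise convergence of finite convex functions is locally uniform and gradients converge wherever the limit is differentiable, which here is everywhere.

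For (b), the key is that the resolvent $J_\lambda := (I + \lambda \partial f)^{-1} = \operatorname{prox}_{\lambda f}$ of the maximal monotone operator $\partial f$ is everywhere single-valued and nonexpansive, and that the change of variables $(x, v) \mapsto (x + \lambda v, x)$ is a homeomorphism carrying $\gph(\partial f)$ onto $\gph(J_\lambda)$. Through this bijection, Painlev\'e--Kuratowski convergence of the graphs $\gph(\partial f^i)$ to $\gph(\partial f)$ translates into convergence of the graphs of the resolvents, and since the resolvents are uniformly nonexpansive single-valued maps, graphical convergence of them is the same as pointwise convergence $J_\lambda^i \to J_\lambda$. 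Combining (a) and (b) via the identity $\nabla e_\lambda f = \frac{1}{\lambda}(I - \operatorname{prox}_{\lambda f})$, the pointwise convergence of the envelope gradients and the pointwise convergence of the resolvents become literally the same statement, which closes the loop in both directions.

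The hard part will be the maximal monotonicity machinery underlying (b): one needs Rockafellar's theorem that $\partial f$ is maximal monotone, so that $J_\lambda$ is defined on all of $\R^n$ and is firmly nonexpansive, together with the careful verification that graphical convergence of maximal monotone operators is equivalent to pointwise convergence of their resolvents, including the uniformity required to pass from convergence at a single $\lambda$ to the full graphical statement. A secondary technical point is making the two converses in (a) precise, namely extracting epi-convergence of the $f^i$ from envelope convergence by letting $\lambda \downarrow 0$ while controlling the minimizers; this needs the level-coercivity and the tightness of the approximating problems, both of which the quadratic penalty conveniently supplies.
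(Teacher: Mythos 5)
Your overall strategy --- routing the equivalence through Moreau envelopes and resolvents via the identities $\operatorname{prox}_{\lambda f} = (I+\lambda\,\partial f)^{-1}$ and $\nabla e_\lambda f = \frac{1}{\lambda}\big(I - \operatorname{prox}_{\lambda f}\big)$ --- is exactly the canonical machinery behind Attouch's theorem; note that the paper itself offers no proof but simply cites \cite[Theorem~12.35]{Rockaf}, whose proof rests on these same ingredients. Your step (a) (epi-convergence of convex lsc proper functions $\Leftrightarrow$ pointwise convergence of their Moreau envelopes) and your step (b) (graphical convergence of the maximal monotone maps $\partial f^i$ $\Leftrightarrow$ pointwise convergence of their resolvents) are both correct and provable along the lines you indicate.

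The gap is in the gluing step, and it cannot be repaired for the statement as written. Step (a) concerns the envelope \emph{values} $e_\lambda f^i$, while step (b), through the identity $\nabla e_\lambda f = \frac{1}{\lambda}\big(I-\operatorname{prox}_{\lambda f}\big)$, only controls the envelope \emph{gradients}. A finite convex $C^1$ function is determined by its gradient only up to an additive constant, so pointwise convergence $\nabla e_\lambda f^i \to \nabla e_\lambda f$ does not yield $e_\lambda f^i \to e_\lambda f$. Concretely, take $f^i = f + i$: then $\partial f^i \equiv \partial f$ and $\operatorname{prox}_{\lambda f^i} \equiv \operatorname{prox}_{\lambda f}$, so graphical convergence of the subdifferentials holds trivially, yet $e_\lambda f^i = e_\lambda f + i$ and $f^i$ does not epi-converge to $f$ (it epi-converges to the improper function $+\infty$). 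Hence the implication ``graphical convergence $\Rightarrow$ epi-convergence'' is false as stated; your forward direction (epi-convergence $\Rightarrow$ graphical convergence) is sound. The defect ultimately lies in the statement itself: the paper quotes a ``reduced version'' of Attouch's theorem, whereas the full theorem in \cite[Theorem~12.35]{Rockaf} carries a normalization condition --- the existence of points $(x^i,v^i) \in \gph(\partial f^i)$ converging to some $(x,v) \in \gph(\partial f)$ with $f^i(x^i) \to f(x)$ --- which is precisely what pins down the additive constants your argument loses. (In the paper's application to indicator functions of convex sets this condition is automatic, since all relevant function values are $0$, which is why the reduced version suffices there.) To complete your proof you must either add this hypothesis and track the constant through the envelope identity, or content yourself with the forward implication, which is the only direction the paper actually uses.
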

The following theorem plays an important role in this reconstruction and will deal with the convergence of the normal cones. If the normal vectors of $\mathcal{R}_{h\Delta}(\cdot)$ converge to the corresponding ones of  $\mathcal{R}(\cdot)$, the discrete optimal controls can be computed with the discrete Pontryagin Maximum Principle under suitable assumptions.\\
For the remaining part of this subsection let us consider a 
fixed index $i\in \bb{1,2\ldots,K}$.
We choose a space discretization $\Delta = \Delta(h)$ with $\Orem{\Delta} = \Orem{h^p}$ 
(compare with~\cite[Sec.~3.1]{BPhD}) and often suppress the index $\Delta$ for the approximate solutions and controls.
\begin{theorem}\label{theo:normaconver}
Consider a discrete approximation of reachable sets of type (I)--(III) with
 \begin{align} \label{eq:conv_reach_sets}
    \lim_{h \downarrow 0} \dH(\mathcal{R}_{h\Delta}(t_i),\mathcal{R}(t_i)) & = 0.
 \end{align}
Under Assumptions~\ref{standassum}, the set-valued maps $x \mapsto N_{\mathcal{R}_{h\Delta}(t_i)}(x)$ converge graphically to the set-valued map $x \mapsto N_{\mathcal{R}(t_i)}(x)$
for $i=1,\ldots,K$.
\end{theorem}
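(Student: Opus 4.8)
The plan is to connect the Hausdorff convergence of the reachable sets in~\eqref{eq:conv_reach_sets} to the graphical convergence of the normal cone maps via Attouch's theorem (Theorem~\ref{theo:attouch}). The normal cone $N_{\mathcal{R}(t_i)}(x)$ of a convex set at a point is precisely the subdifferential of the indicator function $I_{\mathcal{R}(t_i)}$ at $x$; likewise $N_{\mathcal{R}_{h\Delta}(t_i)}(x) = \partial I_{\mathcal{R}_{h\Delta}(t_i)}(x)$. Since each $\mathcal{R}_{h\Delta}(t_i)$ and $\mathcal{R}(t_i)$ is convex, compact and nonempty (by Assumptions~\ref{standassum}(i)--(iii) and the convexity of the discrete reachable sets established earlier), the indicator functions $I_{\mathcal{R}_{h\Delta}(t_i)}$ and $I_{\mathcal{R}(t_i)}$ are lower semicontinuous, convex and proper, so Theorem~\ref{theo:attouch} applies directly. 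Thus the desired graphical convergence of $(\partial I_{\mathcal{R}_{h\Delta}(t_i)})_h = (N_{\mathcal{R}_{h\Delta}(t_i)})_h$ to $\partial I_{\mathcal{R}(t_i)} = N_{\mathcal{R}(t_i)}$ is \emph{equivalent} to the epi-convergence of $(I_{\mathcal{R}_{h\Delta}(t_i)})_h$ to $I_{\mathcal{R}(t_i)}$.

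Therefore the whole burden of the proof reduces to one step: \textbf{show that $\dH(\mathcal{R}_{h\Delta}(t_i),\mathcal{R}(t_i)) \to 0$ forces $I_{\mathcal{R}_{h\Delta}(t_i)}$ to epi-converge to $I_{\mathcal{R}(t_i)}$}. I would verify the two defining inequalities of epi-convergence (the ``liminf'' and ``limsup'' conditions of~\cite[Definition~7.1]{Rockaf}). For the limsup/recovery-sequence part, given $x \in \R^n$ I must produce $x_h \to x$ with $\limsup_h I_{\mathcal{R}_{h\Delta}(t_i)}(x_h) \le I_{\mathcal{R}(t_i)}(x)$; this is trivial when $x \notin \mathcal{R}(t_i)$ (take $x_h = x$), and when $x \in \mathcal{R}(t_i)$ I pick $x_h$ a nearest point of $\mathcal{R}_{h\Delta}(t_i)$ to $x$, so that $\norm{x_h - x} \le \dH(\mathcal{R}(t_i),\mathcal{R}_{h\Delta}(t_i)) \to 0$ and $I_{\mathcal{R}_{h\Delta}(t_i)}(x_h)=0$. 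For the liminf part, given any $x_h \to x$ I need $\liminf_h I_{\mathcal{R}_{h\Delta}(t_i)}(x_h) \ge I_{\mathcal{R}(t_i)}(x)$; the only nontrivial case is $x \notin \mathcal{R}(t_i)$, where I argue that $x$ has positive distance $d$ to the compact set $\mathcal{R}(t_i)$, and for $h$ small enough $\dH(\mathcal{R}_{h\Delta}(t_i),\mathcal{R}(t_i)) < d/2$ together with $\norm{x_h-x}<d/4$ keeps $x_h$ outside $\mathcal{R}_{h\Delta}(t_i)$, forcing $I_{\mathcal{R}_{h\Delta}(t_i)}(x_h)=+\infty$ eventually.

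The main obstacle I anticipate is a purely bookkeeping one rather than a conceptual one: carefully establishing that Hausdorff convergence of compact convex sets is equivalent to (in fact a special, stronger case of) Painlev\'e--Kuratowski set convergence, and that the latter is exactly what makes the indicator functions epi-converge. One must be a little careful that $\dH$-convergence controls both the inner and outer limits of the sets uniformly, which is cleaner here than for general closed sets precisely because all sets involved are uniformly bounded (they sit in a common large ball, as the $\mathcal{R}_{h\Delta}(t_i)$ approximate the compact $\mathcal{R}(t_i)$). Once the equivalence ``$\dH \to 0$ implies epi-convergence of indicators'' is in hand, the conclusion for $i=1,\ldots,K$ is immediate from Theorem~\ref{theo:attouch} applied index by index, since the hypothesis~\eqref{eq:conv_reach_sets} holds for each fixed $i$.
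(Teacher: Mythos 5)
Your proposal is correct and takes essentially the same route as the paper: both reduce the statement, via the identification $N_{\mathcal{R}(t_i)}(\cdot)=\partial I_{\mathcal{R}(t_i)}(\cdot)$ for convex sets and Attouch's theorem (Theorem~\ref{theo:attouch}), to the epi-convergence of the indicator functions of the discrete reachable sets to that of $\mathcal{R}(t_i)$. The only difference is in how that epi-convergence is obtained: you verify the liminf/limsup conditions directly from the Hausdorff convergence (which is a correct, self-contained argument), whereas the paper cites \cite[Example~4.13]{Rockaf} to pass from Hausdorff to Painlev\'{e}--Kuratowski set convergence and then \cite[Proposition~7.4(f)]{Rockaf} to conclude epi-convergence of the indicators.
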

\begin{proof}
    Let us recall that, under Assumptions \ref{standassum} and by the construction in Subsec.~\ref{subsec:sv_discr_meth}, $\mathcal{R}_{h\Delta}(t_i)$, $\mathcal{R}(t_i)$ are 
    convex, compact and nonempty sets.
    Moreover, we also have that the indicator functions 
     $I_{\mathcal{R}_{h\Delta}(t_i)}(\cdot),I_{\mathcal{R}(t_i)}(\cdot)$ are lower semicontinuous convex functions (see \cite[Exercise~2.1]{CLSW}). 
By \cite[Example~4.13]{Rockaf} the convergence in~\eqref{eq:conv_reach_sets} with respect to the
Hausdorff set also implies the set convergence in the sense of Painlev\'{e}-Kuratowski (see \cite[Sec.~4.A--4.B]{Rockaf}). 
Hence, \cite[Proposition~7.4(f)]{Rockaf} applies and shows that the corresponding
indicator functions converge epi-graphically. Since the subdifferential of the (convex)
indicator functions coincides with the normal cone by~\cite[Exercise~8.14]{Rockaf},
Attouch's Theorem \ref{theo:attouch} yields the graphical convergence of the corresponding
normal cones.
\end{proof}
The remainder deals with the reconstruction of discrete optimal trajectories and 
the proof of convergence of optimal controls in the \emph{$L^1$-norm}, 
i.e., $\int_{0}^{t_i}\|\hat{u}(t)-\hat{u}_h(t) \|_1dt\rightarrow 0$ as $h\downarrow 0$
for $\hat{u}(\cdot),\,\hat{u}_h(\cdot)$ being defined later,  where 
the \emph{$\ell_1$-norm} is defined for $x\in\R^n$ as $ \|x\|_1=\sum_{i=1}^{n}|x_i|$. 
To illustrate the 
idea, we confine to a special form of the target and control set, i.e., $\mathcal S=\bb{0},\,U=[-1,1]^m,\,t\in [0,t_i]$ 
and the time invariant time-reversed linear system
\begin{align}
\label{eq:invardyn}
\begin{cases}
\dot{y}(t)&=\bar{A}y(t)+\bar Bu(t),\ u(t)\in [-1,1]^m,\\
y(0)&=0.
\end{cases}
\end{align}
Algorithm \ref{algorithm} can be interpreted pointwisely in this context as follows. For any $y_{(i-1)N}\in Y_{h\Delta}(t_i)$ there exists a sequence of controls $\bb{u_{kj}}^{ k=1,\ldots,i-1}_{j=0,\ldots,N}$ such that
\begin{equation}\label{eq:numpointw}
\begin{cases}
y_{(k-1)N}&=\Phi_h\big(t_k,t_{k-1}\big)y_{(k-1)0}+h\sum_{j=0}^{N}c_{kj}\Phi_h(t_k,t_{(k-1)j})\bar Bu_{(k-1)j},\\
y_{00}&=0,
\end{cases}
\end{equation}
for $k=1,\ldots,i$. Thus $y_{(i-1)N}=h\sum_{k=1}^{i}\sum_{j=0}^{N}c_{kj}\Phi_h(t_i,t_{(k-1)j})\bar Bu_{(k-1)j}.$
The continuous-time adjoint equation of~\eqref{eq:invardyn} written for $n$-row vectors reads as
\begin{equation}\label{eq:adjoint}
\begin{cases}
\dot{\eta}(t)&=- \eta(t)\bar{A},\\
\eta(t_i)&=\zeta
\end{cases}
\end{equation}
and its discrete version, approximated by the same method (see~\cite[Chap.~5]{Ge}) as the one used 
to discretize \eqref{eq:invardyn}, i.e., \eqref{eq:numpointw}, can be written as follows. For $k=i-1,i-2,\ldots,0$ and $j=N,N-1,\ldots,1$,
\begin{equation}\label{eq:disadjoint}
\begin{cases}
\eta_{k(j-1)}&= \eta_{kj}  \Phi _h(t_{kj},t_{k(j-1)})  \\
\eta_{(i-1)N}&=\zeta_{h},
\end{cases}
\end{equation}
where $\zeta  ,\,\zeta_h  $ will be clarified later.
By the definition of $t_{kj}$ (see Algorithm~\ref{algorithm}) the index $k0$ can be replaced by $(k-1)N$, the solution of \eqref{eq:disadjoint} in backward time is therefore possible.  Here, the end condition will be chosen subject to certain transversality conditions, see the latter reference for more details. 

 Due to well-known arguments (see e.g.,~\cite[Sec.~2.2]{LM}) 
the end point of the time-optimal solution lies on the boundary of the reachable set and the adjoint solution
$\eta(\cdot)$ is an outer normal at this end point.
Similarly, this also holds in the discrete case. The following proposition formulates this fact by a discrete version of \cite[ Sec.~2.2, Theorem~2]{LM}. The proof is just a translation of the one of the cited theorem in \cite{LM} to the discrete language. For the sake of clarity, we will formulate and prove it in detail.
\begin{proposition}
Consider the system \eqref{eq:invardyn} in $\R^n$  with its adjoint problem~\eqref{eq:adjoint} as well as their discrete pendants \eqref{eq:numpointw}, \eqref{eq:disadjoint} respectively. Let $ \bb{u_{kj}} $  be a sequence of controls, $ \bb{y_{kj}} $ be its corresponding discrete solution. Then under Assumptions \ref{standassum}, for $h$ small enough,  
$y_{(i-1)N} \in Y_{h\Delta}(t_i)$ if and only if there exists nontrivial solution $\bb{\eta_{kj}}$ of \eqref{eq:disadjoint} such that 
$$\eta_{kj}\bar B u_{kj}=\max_{u\in U} \bb{\eta_{kj} \bar Bu}$$
 for $k=0,...,i-1,\,\,j=0,...,N$, where $Y_{h\Delta}(t_i)$ is defined as in Algorithm \ref{algorithm}.
\end{proposition}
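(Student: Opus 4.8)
The plan is to exploit that, since the system is linear and $U$ is convex, the fully discrete reachable set is an explicit Minkowski sum whose support function splits additively; the claimed discrete maximum principle then reduces to a direction-by-direction statement. First I would solve the discrete adjoint recursion \eqref{eq:disadjoint} in closed form. Using the composition property (II)a) of $\Phi_h$, backward iteration from $\eta_{(i-1)N}=\zeta_h$ yields the row vector $\eta_{kj}=\zeta_h\,\Phi_h(t_i,t_{kj})$ for all admissible $k,j$, where $t_{(i-1)N}=t_i$. For $h$ small enough each single-step factor $\Phi_h(t_{kj},t_{k(j-1)})$ is invertible (it is $\Orem{h}$-close to the identity and, more globally, $\Phi_h$ is close to the invertible fundamental matrix $\Phi$ by (II)b)), so the recursion is a linear bijection and $\bb{\eta_{kj}}$ is nontrivial if and only if $\zeta_h\neq 0$.

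Next I would record the two convex-analytic identities driving the argument: for a matrix $M$ and $K\in\mathcal{C}(\R^m)$ one has $\delta^*(\zeta,MK)=\delta^*(M^\top\zeta,K)$, and for Minkowski sums $\delta^*(\zeta,\sum_\ell K_\ell)=\sum_\ell\delta^*(\zeta,K_\ell)$. Writing $\zeta:=\zeta_h^\top\in\R^n$ and reindexing the pointwise representation as $\mathcal{R}_{h\Delta}(t_i)=h\sum_{k=0}^{i-1}\sum_{j=0}^{N}c_{(k+1)j}\,\Phi_h(t_i,t_{kj})\bar B U$, these identities give $\delta^*(\zeta,\mathcal{R}_{h\Delta}(t_i))=h\sum_{k,j}c_{(k+1)j}\,\delta^*(\bar B^\top\Phi_h(t_i,t_{kj})^\top\zeta,\,U)$. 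Since $\eta_{kj}\bar B u=\scal{\bar B^\top\Phi_h(t_i,t_{kj})^\top\zeta}{u}$, the maximum condition $\eta_{kj}\bar B u_{kj}=\max_{u\in U}\eta_{kj}\bar B u$ is \emph{exactly} the statement that $u_{kj}$ is a supporting point of $U$ in the direction $\bar B^\top\Phi_h(t_i,t_{kj})^\top\zeta$.

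With these identities both implications follow from a single observation: a finite sum of terms, each bounded above by its own supremum, equals the sum of the suprema if and only if every term attains its supremum. For the \emph{if} direction I would set $\zeta:=\eta_{(i-1)N}^\top\neq 0$; the maximum conditions make each summand $\scal{\zeta}{\Phi_h(t_i,t_{kj})\bar B u_{kj}}$ equal to $\delta^*(\zeta,\Phi_h(t_i,t_{kj})\bar B U)$, and summing with weights $hc_{(k+1)j}$ yields $\scal{\zeta}{y_{(i-1)N}}=\delta^*(\zeta,\mathcal{R}_{h\Delta}(t_i))$, so $y_{(i-1)N}$ is a supporting point of $\mathcal{R}_{h\Delta}(t_i)$ and hence lies in $Y_{h\Delta}(t_i)$. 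Conversely, if $y_{(i-1)N}\in Y_{h\Delta}(t_i)$ it is a supporting point in some admissible direction $\zeta=l^k\in S_{\mathcal{R}}^{\Delta}$; propagating $\zeta_h:=\zeta^\top$ backward through \eqref{eq:disadjoint} produces a nontrivial adjoint, and the additive equality $\scal{\zeta}{y_{(i-1)N}}=\delta^*(\zeta,\mathcal{R}_{h\Delta}(t_i))$ forces each summand to attain its maximum, which is precisely the required pointwise maximum condition.

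The step I expect to be the main obstacle is the bookkeeping that identifies membership in $Y_{h\Delta}(t_i)$ with being a supporting point of $\mathcal{R}_{h\Delta}(t_i)$ in one of the finitely many admissible directions of Algorithm~\ref{algorithm}, together with the invertibility of $\Phi_h$ needed for nontriviality, which is where the hypothesis ``$h$ small enough'' enters. The convex-analytic core is routine once the Minkowski-sum representation is in place, but one must be careful that the direction supplied by the adjoint is compatible with the discretized sphere $S_{\mathcal{R}}^{\Delta}$, and that the selection $y(l^k,\widetilde{\mathcal{R}}_{h\Delta}(t_i))$ genuinely corresponds to a choice of controls satisfying the maximum condition at every node $t_{kj}$.
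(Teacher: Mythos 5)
Your proposal is correct at the same level of rigor as the paper's own proof, and it shares the same skeleton: the backward adjoint in closed form $\eta_{kj}=\zeta_h\,\Phi_h(t_i,t_{kj})$ (via the composition property (II)a)) and the decomposition $\eta_{(i-1)N}\,y_{(i-1)N}=h\sum_{k,j}c_{kj}\,\eta_{kj}\bar{B}u_{kj}$. Where you differ is the packaging. The paper translates the classical Lee--Markus argument into discrete language: it gets $\zeta_h$ from an abstract supporting hyperplane, computes the pairing by explicitly inverting $\Phi_h$ (this is where ``$h$ small enough'' enters there), and proves each implication by a separate contradiction — repairing a violated maximum condition with an improved control $\tilde{u}_{kj}$, respectively separating an alleged interior point by a better endpoint. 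You route both implications through a single convex-analytic lemma: the support function of the Minkowski-sum representation splits additively, and a weighted sum of terms each dominated by its supremum equals the sum of the suprema if and only if every term attains it. This buys a shorter, symmetric treatment, needs $\Phi_h^{-1}$ only to certify nontriviality of the adjoint, and in the converse direction is arguably more faithful to Algorithm \ref{algorithm}, since your normal is one of the finitely many directions $l^k\in S_{\mathcal{R}}^{\Delta}$ rather than an arbitrary outer normal. Two caveats apply equally to your argument and to the paper's: nodes with zero quadrature weight $c_{kj}$ (e.g.\ the terminal weight of a Riemann sum) contribute nothing to the pairing, so the maximum condition at such nodes cannot actually be deduced — both proofs implicitly assume positive weights; and identifying $Y_{h\Delta}(t_i)$ with the supporting (boundary) points of the exact Minkowski sum over $U$ glosses over the space discretization ($\widetilde{U}_{\Delta}$ versus $U$, and the finite selection of directions), which you correctly flag as the remaining bookkeeping and which the paper handles in exactly the same implicit way through the pointwise interpretation \eqref{eq:numpointw}.
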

\begin{proof}
Assume that $\bb{u_{jk}}$ is such that $y_{(i-1)N}$ by the response 
$$y_{(i-1)N}=h\sum_{k=1}^{i}\sum_{j=0}^{N}c_{kj}\Phi_h(t_i,t_{(k-1)j})\bar Bu_{(k-1)j}.$$
Since $\mathcal R_{h\Delta}(t_i)$ is a compact and convex set by construction, there exists a supporting hyperplane $\gamma$ to $\mathcal R_{h\Delta}(t_i)$ at $y_{(i-1)N}$. Let $\zeta_h$ be the outer normal vector of $\mathcal R_{h\Delta}(t_i)$ at $y_{(i-1)N}$. Define the nontrivial discrete adjoint response \eqref{eq:disadjoint}, i.e.,
\begin{equation*} 
\begin{cases}
\eta_{k(j-1)}&= \eta_{kj}  \Phi _h(t_{kj},t_{k(j-1)}),  \\
\eta_{(i-1)N}&=\zeta_{h},
\end{cases}
\end{equation*}
Then $\eta_0=\eta_{(i-1)N} \Phi _h(t_i, 0)=\zeta_h\, \Phi _h(t_i, 0)$. Noticing that $\Phi _h(t_{kj},t_{k(j-1)} )$ is a perturbation of the identity matrix $I_n$, there exists $\bar{h}$ such that $\Phi _h(t_{kj},t_{k(j-1)} )$ is invertible for $h\in [0,\bar{h}]$ and so is $\Phi _h(t_i, 0)$. Therefore, $\eta_{(i-1)N}=\eta_0  \Phi _h^{-1}(t_i, 0)$. Now we compute the inner product of $\eta_{(i-1)N},\,y_{(i-1)N}$:
\begin{equation*}
\begin{aligned}
\eta_{(i-1)N}&\,y_{(i-1)N}= \displaystyle \eta_0  \Phi _h^{-1}(t_i, 0) \Big(h\sum_{k=1}^{i}\sum_{j=0}^{N}c_{kj}\Phi_h(t_i,t_{(k-1)j})\bar Bu_{(k-1)j}\Big)\\
&=h\sum_{k=1}^{i}\sum_{j=0}^{N}c_{kj} \eta_0  \Phi _h^{-1}(t_i, 0) \Phi_h(t_i,t_{(k-1)j})\bar Bu_{(k-1)j}\\
&=h\sum_{k=1}^{i}\sum_{j=0}^{N}c_{kj} \eta_0 \Phi _h^{-1}(t_{(k-1)j}, 0) \Phi _h^{-1}(t_i, t_{(k-1)j})  \Phi_h(t_i,t_{(k-1)j})\bar Bu_{(k-1)j}\\
&=h\sum_{k=1}^{i}\sum_{j=0}^{N}c_{kj} \eta_0 \Phi _h^{-1}(t_{(k-1)j}, 0) \bar Bu_{(k-1)j}=h\sum_{k=1}^{i}\sum_{j=0}^{N}c_{kj} \eta_{(k-1)j} \bar Bu_{(k-1)j}.\\
\end{aligned}
\end{equation*}
Now assume that $\eta_{kj}\bar B u_{kj}< \max_{u\in U} \bb{\eta_{kj} \bar Bu}$ for some indices $k,\,j$. Then define another sequence of controls as follows
\begin{equation*}
\tilde{u}_{kj}=
\begin{cases}
u_{kj} &\text{ if } \eta_{kj}\bar B u_{kj}=\max_{u\in U} \bb{\eta_{kj} \bar Bu},\\
\max_{u\in U} \bb{\eta_{kj} \bar Bu} &\text{ otherwise}.
\end{cases}
\end{equation*}
Let $\tilde{y}_{(i-1)N}$ be the end point of the discrete trajectory following $\bb{\tilde{u}_{kj}}$. We have
\begin{equation*}
 \eta_{(i-1)N}\,\tilde{y}_{(i-1)N}=h\sum_{k=1}^{i}\sum_{j=0}^{N}c_{kj} \eta_{(k-1)j} \bar B \tilde u_{(k-1)j}
 \end{equation*}
which implies 
 $\eta_{(i-1)N}\, y_{(i-1)N}<\eta_{(i-1)N}\,\tilde{y}_{(i-1)N}$ or $\eta_{(i-1)N}( \tilde{y}_{(i-1)N}-y_{(i-1)N})>0$ which contradicts the construction of $\eta_{(i-1)N}=\zeta_h$, an outer normal vector of $\mathcal R_{h\Delta}(t_i)$ at $y_{(i-1)N}$. Therefore, $\eta_{kj}\bar B u_{kj}= \max_{u\in U} \bb{\eta_{kj} \bar Bu}$.\\
 Conversely, assume that for some nontrivial discrete adjoint response $${\eta_{(i-1)N}=\eta_0  \Phi _h^{-1}(t_i, 0)},$$ the controls satisfies
 \begin{equation}\label{eq:asscontrol}
 \eta_{kj}\bar B u_{kj}= \max_{u\in U} \bb{\eta_{kj}\bar B u}
 \end{equation}
  for every indices $k=0,...,i-1,\,j=0,...,N$. We will show that the end point $y_{(i-1)N}$ of the corresponding trajectory $\bb{y_{kj}}$ will lie at the boundary of $\mathcal R_{h\Delta}(t_i)$, not at any point belonging to its interior. Suppose, by contradiction, $y_{(i-1)N}$ lies in the interior of 
 $\mathcal R_{h\Delta}(t_i)$. Let $\tilde{y}_{(i-1)N}$ be a point reached by a sequence of controls $\bb{\tilde{u}_{kj}}$ in $\mathcal R_{h\Delta}(t_i)$ in such that 
 \begin{equation}\label{eq:ineqcontrass}
 \eta_{(i-1)N}y_{(i-1)N} < \eta_{(i-1)N}\tilde{y}_{(i-1)N}.
 \end{equation} 
Our assumption \eqref{eq:asscontrol} implies that 
 \begin{equation}\label{eq:ineqcontr}
 \eta_{kj}\bar B \tilde{u}_{kj}\le  \eta_{kj} \bar Bu_{kj}
 \end{equation} for all $k,j$. As above, due to \eqref{eq:ineqcontr}, we show that
 $$\eta_{(i-1)N}\tilde{y}_{(i-1)N}\le \eta_{(i-1)N}y_{(i-1)N}$$ which is a contradiction to \eqref{eq:ineqcontrass}. Consequently, $y_{(i-1)N} \in \partial \mathcal R_{h\Delta}(t_i)=Y_{h\Delta}(t_i)$.
\end{proof}

Motivated by the outer normality of the adjoints in continuous resp.~discrete time and the
maximum conditions, we 
define the optimal controls $\hat{u}(t),\,\hat{u}_h(t)$ as follows
\begin{equation}\label{def:contr}
\left\{
\begin{aligned}
\hat{u}(t) & =\sign (\eta(t)\bar B )^\top & & \text{for } (t \in [0,t_i]), \\
\hat{u}_h(t)&=\hat{u}_{kj}  & & \text{if } t\in [t_{kj},t_{k(j+1)}),\,k=0,...,i-1,\, \\
& & & j=0,...,N-1,\\
\hat{u}_h(t_{(i-1)N})&=\hat{u}_{(i-1)(N-1)} & & \text{for } t=t_{(i-1)N},
\end{aligned}
\right.
\end{equation}
where $\hat{u}_{kj}=\sign (\eta_{kj}\bar B)^\top,\,k=0,...,i-1,\,j=0,...,N$ 
and 
\begin{equation*}
w := \sign(v) \text{ with } w_\mu =
\begin{cases}
1 &\text{ if } v_\mu>0,\\
0 &\text{ if } v_\mu=0,\\
-1 &\text{ if } v_\mu<0
\end{cases}
\end{equation*}
is the \emph{signum function} and $v,w \in \R^m$, $\mu=1,\ldots,m$.

Owing to Theorem~\ref{theo:normaconver}, we have that the set-valued maps $(N_{\mathcal{R}_{h\Delta}(t_i)}(\cdot))_h$ converge graphically to $N_{\mathcal{R}(t_i)}(\cdot)$ 
which implies
that for every 
sequence $(y_{(i-1)N},\eta_{(i-1)N})_N$ 
in the graphs there exists an element $(y(t_i),\eta(t_i))$
of the graph such that 
\begin{equation}\label{eq:convernorvec}
(y_{(i-1)N},\eta_{(i-1)N}) \rightarrow (y(t_i),\eta(t_i)) \text{ as } h \downarrow 0,
\end{equation}
where $\eta_{(i-1)N} \in N_{\mathcal{R}_{h\Delta}(t_i)}(y_{(i-1)N}),\,\eta(t_i)\in N_{\mathcal{R}(t_i)}(y(t_i))$. Thus $\zeta,\,\zeta_h$ are chosen such that \eqref{eq:convernorvec} is realized.
Then it is obvious that $\eta_{kj} \rightarrow \eta(t_{kj})$ as $h \downarrow 0$ with $k=0,...,i-1$ uniformly in $j=0,...,N$. 

For a function $g \colon I \rightarrow \R^m$, we denote the total variation $V(g,I):=\sum_{1}^{m}V(g_i,I)$, where $V(g_i,I)$ is the usual total variation of the $i$-th components of $g$ over a bounded interval $I\in \R$.  
Now if we assume that the system \eqref{eq:invardyn}  
is normal, $\hat{u}_h(t)$ converges to $\hat{u}(t)$ in 
the $L^1$-norm.
\begin{proposition}
Consider that the minimum time problem with the dynamics~\eqref{eq:invardyn} in $\R^n$. Assume that the normality condition holds, i.e.,
\begin{equation}\label{eq:rank}
\rk \bb{B\omega,AB\omega,\ldots,A^{n-1}B\omega}=n 
\end{equation}
for each (nonzero) vector $\omega$
 along an edge of $U=[-1,1]^m$ or along the two end points of the interval $U=[-1,1]$ if $m=1$. Then, under Assumptions \ref{standassum}, $\int_{0}^{t_{i}} \|\hat{u}(t)-\hat{u}_h(t)\|_1dt \rightarrow 0$ as $h\rightarrow 0$ for any $i\in \bb{1,\ldots,K}$.
\end{proposition}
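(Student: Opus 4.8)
The plan is to reduce the $L^1$-convergence to almost-everywhere pointwise convergence of the two bang-bang controls and then invoke the bounded convergence theorem. Both $\hat u(\cdot)$ and $\hat u_h(\cdot)$ take values in $[-1,1]^m$ by~\eqref{def:contr}, so $\|\hat u(t)-\hat u_h(t)\|_1\le 2m$ on the interval $[0,t_i]$ of finite length. Hence, once $\hat u_h(t)\to\hat u(t)$ is shown for a.e.\ $t\in[0,t_i]$, the bounded convergence theorem gives $\int_0^{t_i}\|\hat u(t)-\hat u_h(t)\|_1\,dt\to0$. Two ingredients remain: (a) a structural description of $\hat u(\cdot)$ showing it switches only finitely often, and (b) convergence of the discrete adjoints to $\eta(\cdot)$ at the continuity points of $\hat u(\cdot)$.

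For (a) I would use that~\eqref{eq:invardyn} is time-invariant, so that the adjoint in~\eqref{eq:adjoint} is $\eta(t)=\zeta\,e^{-\bar A(t-t_i)}$ and the $\mu$-th component of the switching function
\begin{equation*}
s_\mu(t):=\eta(t)\bar B e_\mu=\zeta\,e^{-\bar A(t-t_i)}\bar B e_\mu \qquad (\mu=1,\ldots,m)
\end{equation*}
is real-analytic (indeed entire, being a finite combination of terms $t^\ell e^{\lambda t}$ with $\lambda$ an eigenvalue of $\bar A$). Such a function is either identically zero or has finitely many zeros on the compact interval $[0,t_i]$. To rule out the former, I would differentiate $s_\mu$ repeatedly at $t=t_i$: if $s_\mu\equiv0$, then $\zeta\bar A^\ell\bar B e_\mu=0$ for $\ell=0,\ldots,n-1$, i.e.\ $\zeta$ is orthogonal to $\operatorname{span}\{\bar B e_\mu,\bar A\bar B e_\mu,\ldots,\bar A^{n-1}\bar B e_\mu\}$. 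Since the edges of $U=[-1,1]^m$ are parallel to the coordinate axes, the normality condition~\eqref{eq:rank} with $\omega=e_\mu$ says precisely that this span equals $\R^n$, forcing $\zeta=0$ and contradicting the nontriviality of $\eta(\cdot)$. Thus each $s_\mu$ has only finitely many zeros, the set $Z:=\{t\in[0,t_i]:s_\mu(t)=0\text{ for some }\mu\}$ is finite, and $\hat u(\cdot)=\sign(\eta(\cdot)\bar B)^\top$ is piecewise constant with values in $\{-1,1\}^m$ off the null set $Z$.

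For (b) I would fix $t\in[0,t_i]\setminus Z$ and let $(k,j)=(k(h),j(h))$ be the indices with $t\in[t_{kj},t_{k(j+1)})$, so that $t_{kj}\to t$ as $h\downarrow0$. Combining the uniform convergence $\sup_j\|\eta_{kj}-\eta(t_{kj})\|\to0$ (available for each of the finitely many $k\le i-1$, as recorded after~\eqref{eq:convernorvec}) with the continuity of $\eta(\cdot)$ yields $\eta_{kj}\to\eta(t)$, hence $\eta_{kj}\bar B e_\mu\to s_\mu(t)$ for every $\mu$. Because $s_\mu(t)\ne0$ for all $\mu$ when $t\notin Z$ and the scalar signum is locally constant away from $0$, it follows that $\sign(\eta_{kj}\bar B e_\mu)=\sign(s_\mu(t))$ for all sufficiently small $h$; that is $\hat u_h(t)=\hat u_{kj}=\hat u(t)$ eventually. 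This gives pointwise convergence on $[0,t_i]\setminus Z$, hence a.e., and closes the argument by the bounded convergence theorem as announced in the first paragraph.

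The main obstacle is step (a): converting the algebraic normality condition~\eqref{eq:rank} into the analytic fact that the switching function cannot vanish on a set of positive measure. It relies essentially on the analyticity of $s_\mu(\cdot)$---which is why time-invariance is assumed---and on correctly identifying the edge directions of the cube $U$ with the basis vectors $e_\mu$. Once $Z$ is known to be finite, ingredient (b) and the passage to the $L^1$-limit are routine.
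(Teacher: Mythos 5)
Your proof is correct, but it follows a genuinely different route from the paper's. The paper argues quantitatively: it splits $\|\hat{u}(t)-\hat{u}_h(t)\|_1 \le \|\hat{u}(t)-\hat{u}(t_{kj})\|_1 + \|\hat{u}(t_{kj})-\hat{u}_h(t_{kj})\|_1$ on each cell $I_{kj}$, integrates, and sums to obtain the bound
$h\,V(\hat{u},[t_0,t_i]) + h\sum_{k,j}\|\sign(\eta(t_{kj})\bar B)^\top-\sign(\eta_{kj}\bar B)^\top\|_1$,
where the finiteness of the variation comes from citing the classical normality result \cite[Sec.~2.5, Corollary~2]{LM}, and the conclusion follows from the claim that the sign-mismatch sum stays bounded as $h\downarrow 0$. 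You instead prove almost-everywhere pointwise convergence and invoke bounded convergence: you rederive the finite-switching property from scratch (analyticity of the switching functions $s_\mu$ plus the rank condition \eqref{eq:rank} applied to the edge directions $e_\mu$, which is essentially the content of the cited result in \cite{LM}), and then use the adjoint convergence recorded after \eqref{eq:convernorvec} together with local constancy of the signum away from its zero set. The trade-off is instructive: your argument is softer and sidesteps the subtlest point of the paper's proof --- the uniform-in-$h$ boundedness of the sign-mismatch sum, which has on the order of $N=\Delta t/h$ terms and implicitly requires that sign disagreements occur at only boundedly many grid indices near the finitely many switching times; that claim is asserted rather than argued in the paper. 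Conversely, the paper's estimate, when its boundedness claim is granted, yields a convergence rate $\Orem{h}$, whereas your dominated-convergence argument is purely qualitative and gives no rate. Both proofs consume the same two inputs (finitely many switchings via normality, and convergence of the discrete adjoints), so the difference lies entirely in the final limiting mechanism.
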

\begin{proof}
Due to \eqref{eq:rank} $\hat{u}(t)$ defined as in \eqref{def:contr} on $t_0\le t\le t_i$ is the optimal control to reach the state $\hat{y}(t_i)$ of the corresponding optimal solution from the origin. Moreover, it has a finite number of switchings see \cite[Sec.~2.5, Corollary~2]{LM}. Therefore, the total variation, $V(\hat{u}(t),[t_0,t_i])$, is bounded. Let $\displaystyle I_{kj}=[t_{kj},t_{k(j+1)})$, for $k=0,\ldots,i-1,\,j=0,\ldots,N-1$, and except for  ${I_{(i-1)(N-1)}=[t_{(i-1)(N-1)},t_{(i-1)N}]}$. Then 
\begin{equation}
\begin{aligned}
&\int_{I_{kj}}\|\hat{u}(t)-\hat{u}_h(t)\|_1 dt\le \int_{I_{kj}}(\|\hat{u}(t)-\hat{u}(t_{kj})\|_1+ \|\hat{u}(t_{kj})-\hat{u}_h(t_{kj})\|_1)dt\\
&\le hV(\hat{u}(t),I_{kj})+h\|\sign (\eta(t_{kj})\bar B)^\top-\sign (\eta_{kj}\bar B))^\top \|_1.
\end{aligned}
\end{equation}
Taking a sum over $k=0,\ldots,i-1,\, j=0,\ldots,N-1$ we obtain
\begin{align*}
& \int_{t_0}^{t_i}\|\hat{u}(t)-\hat{u}_h(t)\|_1dt \\
\le \ & hV(\hat{u}(t),[t_0,t_i])+h\sum_{k=0}^{i-1}\sum_{j=0}^{N-1} \|\sign ( \eta(t_{kj})\bar B))^\top-\sign (\eta_{kj}\bar B))^\top \|_1.\\
\end{align*}
Since $\hat{u}(t)$ has a finite number of switchings and 
$\eta_{kj}, \eta(t_{kj})$ are non-trivial with the convergence 
$\eta_{kj} \rightarrow \eta(t_{kj})$ as $h\rightarrow 0$ 
for $k=0,\ldots,i,\, j=0,\ldots,N$, the variation $V(\hat{u}(t),[t_0,t_i])$ and $\sum_{k=0}^{i}\sum_{j=0}^{N-1} \|\sign (\eta(t_{kj})\bar B))^\top-\sign (\eta_{kj}\bar B))^\top \|_1$ are bounded. Therefore, 
\begin{equation*}
\int_{t_0}^{t_i} \|\hat{u}(t)-\hat{u}_h(t) \|_1dt \rightarrow 0 \text{ as } h\rightarrow 0.
\end{equation*}
The proof is completed.
\end{proof}
\section{Numerical tests}\label{sec:num_tests}
The following examples should serve as a collection of academic test examples 
for calculating
the minimum time function for several, mainly linear control problems
which were previously discussed in the literature.
The examples also illustrate the performance of the error behavior of our proposed approach. 

The space discretization follows the presented approach in 
Subsection \ref{subsec:Algorithm}
and uses supporting points in directions
\begin{align*}
l^k & := \bigg( \cos\bigg(2 \pi \frac{k-1}{N_{\mathcal{R}}-1}\bigg), \ \sin\bigg(2 \pi \frac{k-1}{N_{\mathcal{R}}-1}\bigg) \bigg)^\top, \ k=1,\ldots,N_{\mathcal{R}}, \\
\eta^r & := \begin{cases}
-1 + 2(r-1) & \quad \text{if $U=[-1,1]$},\ r=1,\ldots,N_U, \\
l^r  & \quad \text{if $U \subset \R^2$},\  r=1,\ldots,N_U \\
\end{cases}
\end{align*}
and normally choose either $N_U = 2$ for one-dimensional control sets or $N_U = N_{\mathcal{R}}$ for $U \subset \R^2$  in the discretizations of the unit sphere \eqref{eq:discr_unit_spheres}.

The comparison of the two applied methods is done by computing the error with respect to the  
$L^{\infty}$-norm  of the difference between the approximate and the true minimum time 
function evaluated at test points. 
The true minimum time function is delivered 
analytically by tools  from control theory.  
The  test grid points are distributed uniformly over the domain 
$\mathcal{G}=[-1,1]^2$ with step size $\Delta x= 0.02$.  
\subsection{Linear examples}
In the linear, two-dimensional, time-invariant Examples~\ref{ex:1}--\ref{ex:3b} we can
check Assumption \ref{standassum}(iv) 
\begin{quote}
	$\mathcal{R}(t)$ is \emph{strictly expanding} on the compact interval $[t_0,t_f]$, i.e., ${\mathcal{R}(t_1) \subset \inter \mathcal{R}(t_2)}$ for all $t_0\le t_1<t_2\le t_f$.
\end{quote}
in several ways. From the numerical calculations
we can observe this property in the shown figures for the fully discrete reachable sets.
Secondly, we can use the available analytical formula for the minimum time function
resp.~the reachable sets or check 
the Kalman rank condition
$
\rk\Big[ B, A B \Big] = 2
$
for time-invariant systems if the target is the origin (see~\cite[Theorems~17.2 and~17.3]{HL}).

The control sets in the linear examples are either one- or two-dimensional polytopes (a segment or a square)
or balls and are varied to study different regularity allowing high or low order of convergence
for the underlying set-valued quadrature method.
In all linear examples, we apply a set-valued combination method of order 1 and 2 (the set-valued
Riemann sum combined with Euler's method resp.~the set-valued trapezoidal rule with Heun's method).

We start with an example having a Lipschitz continuous minimum time function and verify the
error estimate in Theorem \ref{errT}. Observe that the numerical error here is only
contributed by the spatial discretization of the target set or control set.
\begin{example} 
	\label{ex:1}
	Consider the control dynamics , see \cite{BFS,GL},
	\begin{equation}\label{example1}
	\dot{x}_1=u_1,\,\,\dot{x}_2=u_2,\,\, (u_1,u_2)^\top \in U \text{ with $U:= B_1(0)$ or $U := [-1,1]^2$ }.
	\end{equation}
	We consider either the small ball $B_{0.25}(0)$ or the origin as target set $\mathcal{S}$. This is a simple time-invariant example with $\bar A=\begin{bmatrix}
	0&  0  \\[0.3em]
	0 & 0 
	\end{bmatrix}$, $\bar B=\begin{bmatrix}
	-1&  0  \\[0.3em]
	0 & -1 
	\end{bmatrix}$.
	Its fundamental solution matrix is the identity matrix, therefore
	\begin{equation*}
	\mathcal{R}(t)=\Phi(t,t_0)S+\int_{t_0}^{t}\Phi(t,s)\bar B(s)U=S+(t_0-t)U,
	\end{equation*}
	and any method from (I)--(III) gives the exact solution, i.e.,
	$$
	\mathcal{R}_h(t)=\mathcal{R}(t) 
	=S+(t-t_0)U
	$$ 
	due to the symmetry of $U$. For instance, the set-valued Euler scheme with ${h=\frac{t_{j+1}-t_j}{N}}$ yields 
	\begin{equation*}
	\begin{cases}
	\mathcal{R}_h(t_{j+1})=\mathcal{R}_h(t_j)+h(\bar A \mathcal{R}_h(t_j)+\bar B U)=\mathcal{R}_h(t_j)-hU,\\
	\mathcal{R}_h(t_0)=S,
	\end{cases}
	\end{equation*}
	therefore, $\mathcal{R}_{h}(t_N)=S-NhU=S+( t_N -t_0)U$ and the error is only due to the space discretizations $\mathcal{S}_\Delta \approx \mathcal{S}$, $U_\Delta \approx U$ and does not depend on $h$ 
	(see Table~\ref{tab:1}). The error would be the same for finer step size $h$ and $\Delta t$
	in time or if a higher-order method is applied. Note that the error for the origin as target
	set (no space discretization error) is in the magnitude of the rounding errors of floating
	point numbers.
	We choose $t_f = 1,\,K=10$ and $N=2$ 
        and the set-valued Riemann sum combined with Euler's method
        for the computations. 
	It is easy to check that the minimum time function is Lipschitz continuous, since 
	one of the equivalent Petrov conditions in~\cite{P}, \cite[Chap.~IV, Theorem~1.12]{BCD} with $U=B_1(0)$ or $[-1,1]^2$ hold:
	\begin{align*}
	0 & > \min_{(u_1,u_2)^{\top}\in U} \ang{\nabla d(x,\mathcal{S}),(u_1,u_2)^\top}, \\
	0 & \in \inter\bigg( \bigcup_{u \in U} f(0,u) \bigg) \quad\text{with $f(x,u) = A x + B u$.}
	\end{align*}
	Moreover, the support function with respect to the time-reversed dynamics \eqref{example1} 
	\begin{align*}
	\delta^* (l,\Phi(t,\tau)\bar B(\tau)U) & = \begin{cases}
	\|l\| & \quad\text{if $U = B_1(0)$}, \\
	|l_1|+|l_2| & \quad\text{if $U = [-1,1]^2$}
	\end{cases}
	\end{align*}
	is constant  with respect to the time $t$, so it is trivially arbitrarily continuously differentiable with respect to $t$ with bounded derivatives  uniformly for all $l\in S_{n-1}$. 
	\begin{table}[h]
	\small
		\begin{tabular}{|c|c|c|c|}
			\hline
			$  N_{\mathcal{R}} = N_U $ & $U=B_1(0)$,
			& $U=[-1,1]^2$,
			& $U=[-1,1]^2$, \rule{0ex}{3ex} \\
			&   $\mathcal{S}=B_{0.25}(0)$ 
			& $\mathcal{S}=B_{0.25}(0)$  
			& $\mathcal{S}=\bb{0}$ \rule[-1.5ex]{0ex}{1.5ex} \\
			\hline
			$100$& $ 6.14\times 10^{-4}$  & $ 4.9 \times 10^{-4} $  
			& $ 8.9 \times 10^{-16} $ \rule{0ex}{3ex} \\
			\hline
			$50$ & $ 24\times 10^{-4}$  & $ 19 \times 10^{-4} $  
			& $ 8.9 \times 10^{-16} $ \rule{0ex}{3ex} \\
			\hline
			$25$ &  $ 0.0258 $  & $ 0.0073  $  
			& $ 8.9 \times 10^{-16} $ \rule{0ex}{3ex} \\
			\hline
		\end{tabular}\\[2ex]
		\caption{error estimates for Example~\ref{ex:1} with different control and target sets} 
		\label{tab:1}
	\end{table}
	In Fig.~\ref{fig:1_ball} the minimum time functions are plotted 
	for Example~\ref{ex:1} for two different control sets $U = B_1(0)$ (left) and $U = [-1,1]^2$ (right) with the same two-dimensional target set $\mathcal{S} = B_{0.25}(0)$. 
	The minimum time function is in general not differentiable everywhere. Since it is 
	zero in the interior of the target, one has at most Lipschitz continuity at
	the boundary of $\mathcal{S}$.
	In Fig.~\ref{fig:1_square_target_origin} the minimum time function is plotted 
	for the same control set as in Fig.~\ref{fig:1_ball} (right), but this time the target set
	is the origin and not a small ball.
    
	\begin{figure}[htp]
		\begin{center}
			\includegraphics[scale=0.34]{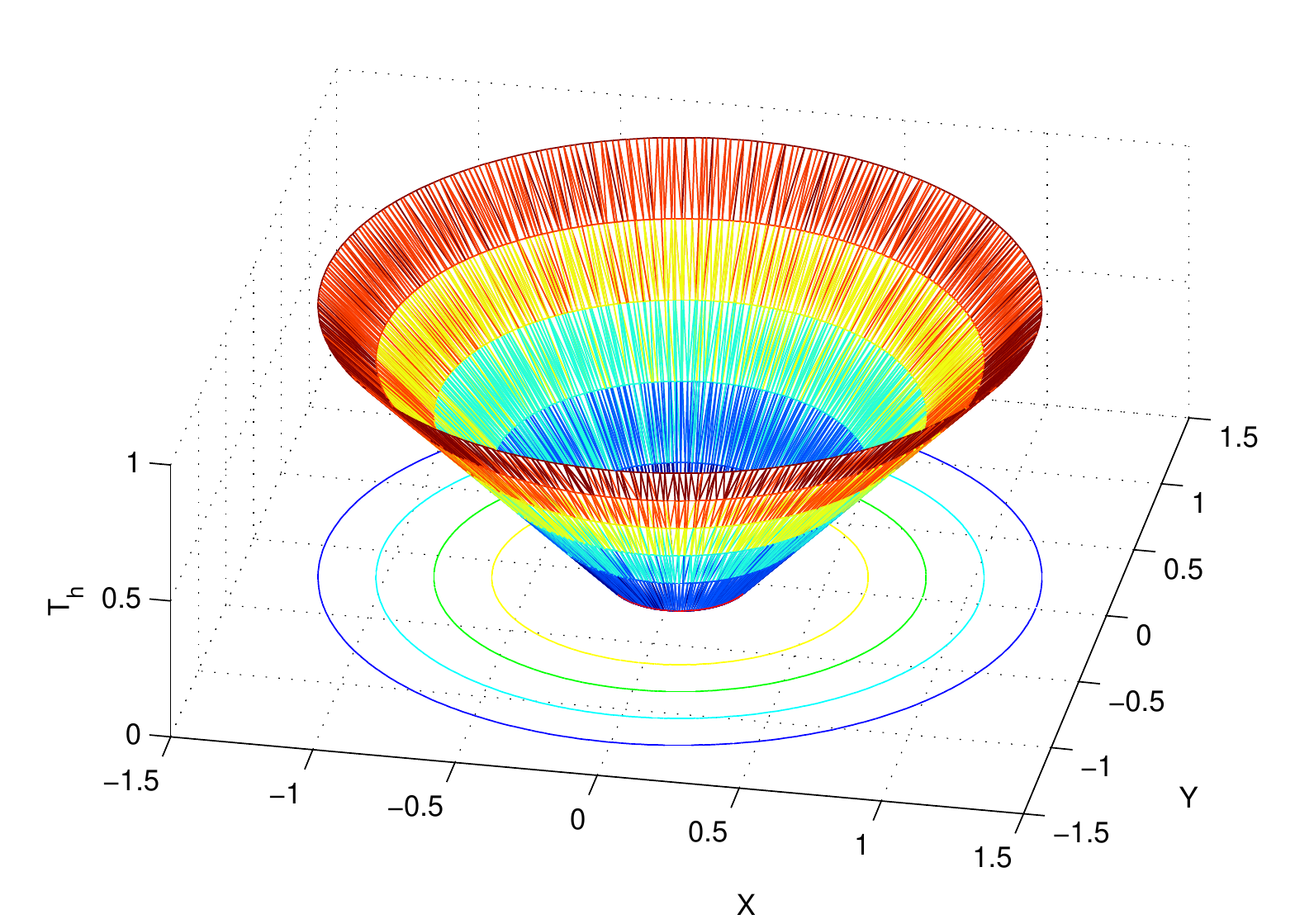}
			\includegraphics[scale=0.34]{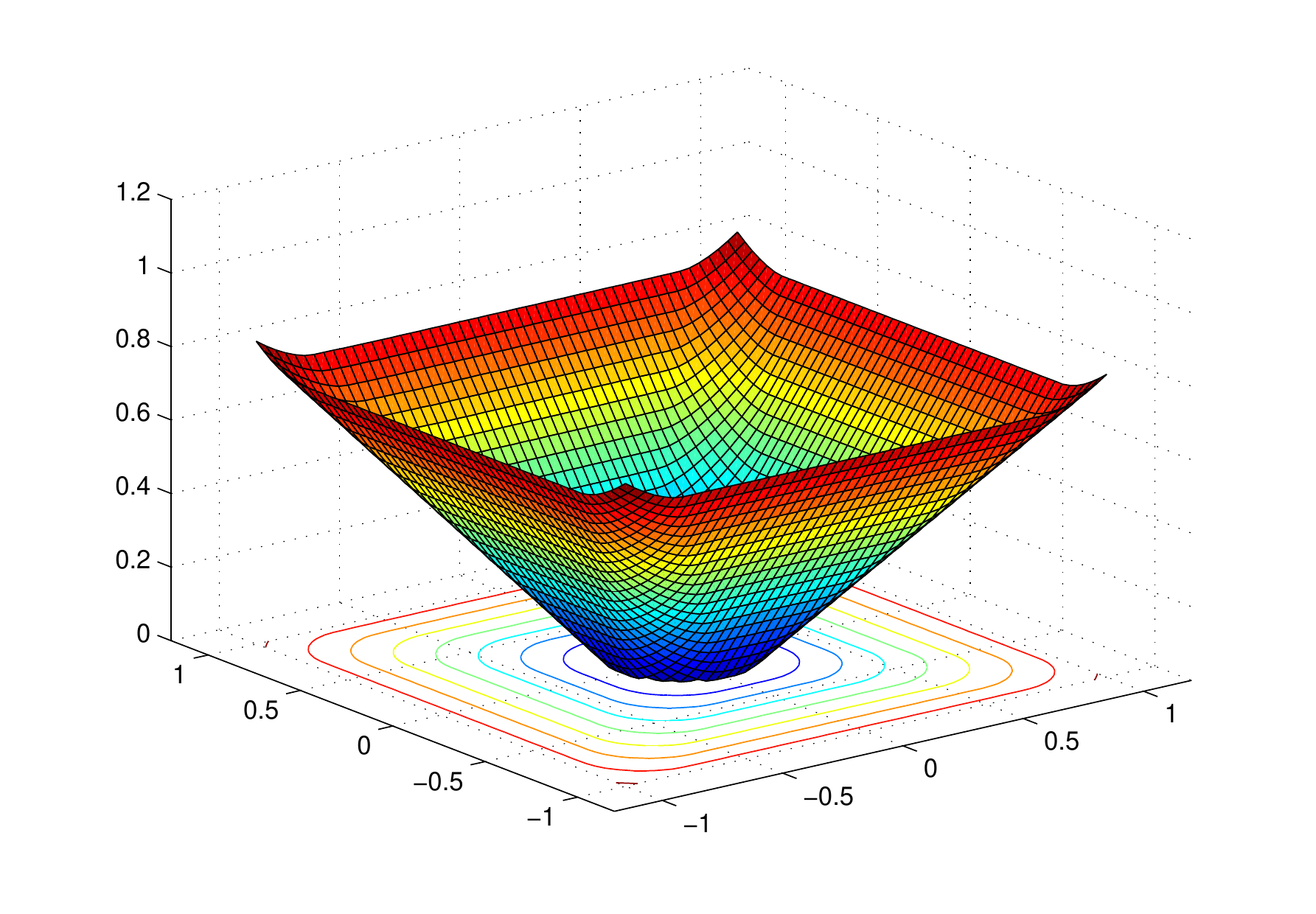}
   			\caption{Minimum time functions for Example~\ref{ex:1} with different control sets}
			\label{fig:1_ball}
		\end{center}
	\end{figure}

	\begin{figure}[htp]
		\begin{center}
			\includegraphics[scale=0.475]{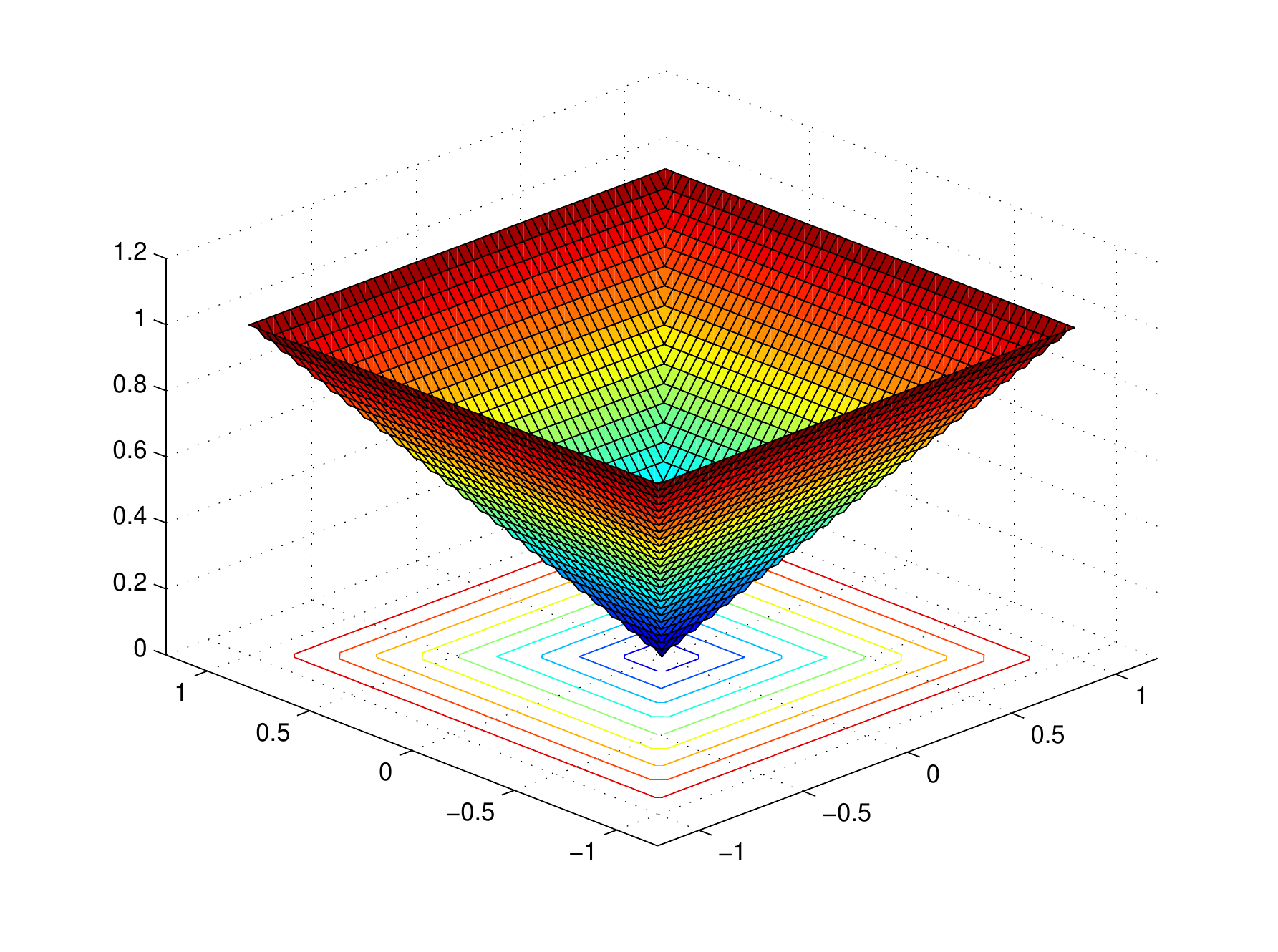}
			\caption{Minimum time function for Example~\ref{ex:1} with ${U = [-1,1]^2}$, $\mathcal{S}=\{0\}$}
			\label{fig:1_square_target_origin}%
		\end{center}
	\end{figure}
\end{example} 

We now study well-known dynamics as the double integrator and the harmonic oscillator
in which the control set is one-dimensional. The classical rocket car example with
H\"older-continuous minimum time function was already computed
by the Hamilton-Jacobi-Bellman approach in~\cite[Test~1]{F} and \cite{CL,GL}, where numerical calculations
are carried out by enlarging the target (the origin) by a small ball.
\begin{example} 
	\label{ex:2}
	a) The following dynamics is the \emph{double integrator}, see e.g.,~\cite{CL}.
	\begin{equation}\label{example3}
	\dot{x}_1=x_2,\,\dot{x}_2=u,\,\,u\in U := [-1,1].
	\end{equation}
	We consider either the small ball $B_{0.05}(0)$ or the origin as target set $\mathcal{S}$.
	Then the minimum time function is  $\frac{1}{2}$--H\"older continuous 
	for the first choice of $\mathcal{S}$ see \cite{AM,CL} and the support function for the time-reversed dynamics \eqref{example3} 
	$$\delta^* (l,\Phi(t,\tau)\bar B(\tau)[-1,1])=\delta \Bigg(l,\begin{bmatrix}
	1&  -(t-\tau)  \\[0.3em]
	0 &  1 
	\end{bmatrix} \begin{bmatrix}
	0  \\[0.3em]
	-1 
	\end{bmatrix}[-1,1]\Bigg)=\big|(t-\tau,-1) \cdot l \big|$$
	is only absolutely continuous with respect to $\tau$ for some directions $l \in S_1$
	with $l_1\neq 0$. Hence, we can expect that the convergence order 
	for the set-valued quadrature method is at most $2$. 
	We fix $t_f = 1$ as maximal computed value for the minimum time function
	and $N = 5$.
	
	In Table~\ref{tab:2} the error estimates for two set-valued combination methods
	are compared (order 1 versus order 2). Since the minimum time function is only
	$\frac{1}{2}$--H\"older continuous we expect as overall convergence order $\frac{1}{2}$
	resp.~$1$. A least squares approximation of the function $C h^{p}$ for the error term
	reveals $C = 1.37606$, $p = 0.4940$ for Euler scheme combined with set-valued Riemann sum
	resp.~$C = 22.18877$, $p = 1.4633$ (if $p=1$ is fixed, then $C= 2.62796$) for Heun's method combined with set-valued trapezoidal
	rule. Hence, the approximated error term is close to the expected one 
	by Theorem \ref{errT} and Remark \ref{Rem_errT}. 
	Very similar results are obtained with the Runge-Kutta methods of order 1 and 2
	in Table~\ref{tab:22} in which the set-valued Euler method is slightly better than the
	combination method of order 1 in Table~\ref{tab:2}, and the set-valued Heun's method
	coincides with the combination method of order 2, since both methods use the same approximations of the given dymanics. 
	Here we have chosen to double the number of directions $N_{\mathcal{R}}$ each time the step size
	is halfened which is suitable for a first order method. For a second order method
	we should have multiplied $N_{\mathcal{R}}$ by 4 instead. From this point it is not surprising
	that there is no improvement of the error in the fifth row for step size $h = 0.0025$. 

	\begin{table}[h]
	
		\begin{tabular}{|l|c|c|c|}
			\hline
			\mbox{ }\ \,$h$ & $N_{\mathcal{R}}$ &  \makecell{\textbf{Euler scheme} \\ \& \textbf{Riemann sum}} 
			& \makecell{\textbf{Heun's scheme} \\ \& \textbf{trapezoid rule}} \\
			\hline
			$0.04$ & $50$ & $0.2951$ & $0.2265$ \\
			\hline
			$0.02$ & $100$ & $0.1862$ & $0.1180$ \\
			\hline
			$0.01$ & $200$ & $0.1332$ & $0.0122$ \\
			\hline
			$0.005$ & $400$ & $0.1132$ & $0.0062$ \\
			\hline
			$0.0025$ & $800$ & $0.0683$ & $0.0062$ \\
			\hline
		\end{tabular}\\[2ex]
		\caption{Error estimates for Ex.~\ref{ex:2} a) for combination methods of order 1 and 2} 
		\label{tab:2}
	\end{table}
     \begin{table}[h]
	\small
		\begin{tabular}{|l|c|c|c|}
			\hline
			\mbox{ }\ \,$h$ & $N_{\mathcal{R}}$ & \textbf{set-valued Euler method} 
			& \textbf{set-valued Heun method} \\
			\hline
			$0.04$ & $50$ & $0.2330$ & $0.2265$ \\
			\hline
			$0.02$ & $100$ & $0.1681$  &  $0.1180$ \\
			\hline
			$0.01$ & $200$ & $0.1149$   & $0.0122$  \\
			\hline
			$0.005$ & $400$ & $0.0753$ &  $0.0062$ \\
			\hline
			$0.0025$ & $800$ &$0.0318$  &  $0.0062$ \\
			\hline
		\end{tabular}\\[2ex]
		\caption{Error estimates for Ex.~\ref{ex:2} a) 
			for Runge-Kutta meth.\ of order 1 and 2} 
		\label{tab:22}
	\end{table}
        
	\begin{figure}[htp]
		\begin{center}
			\includegraphics[scale=0.375]{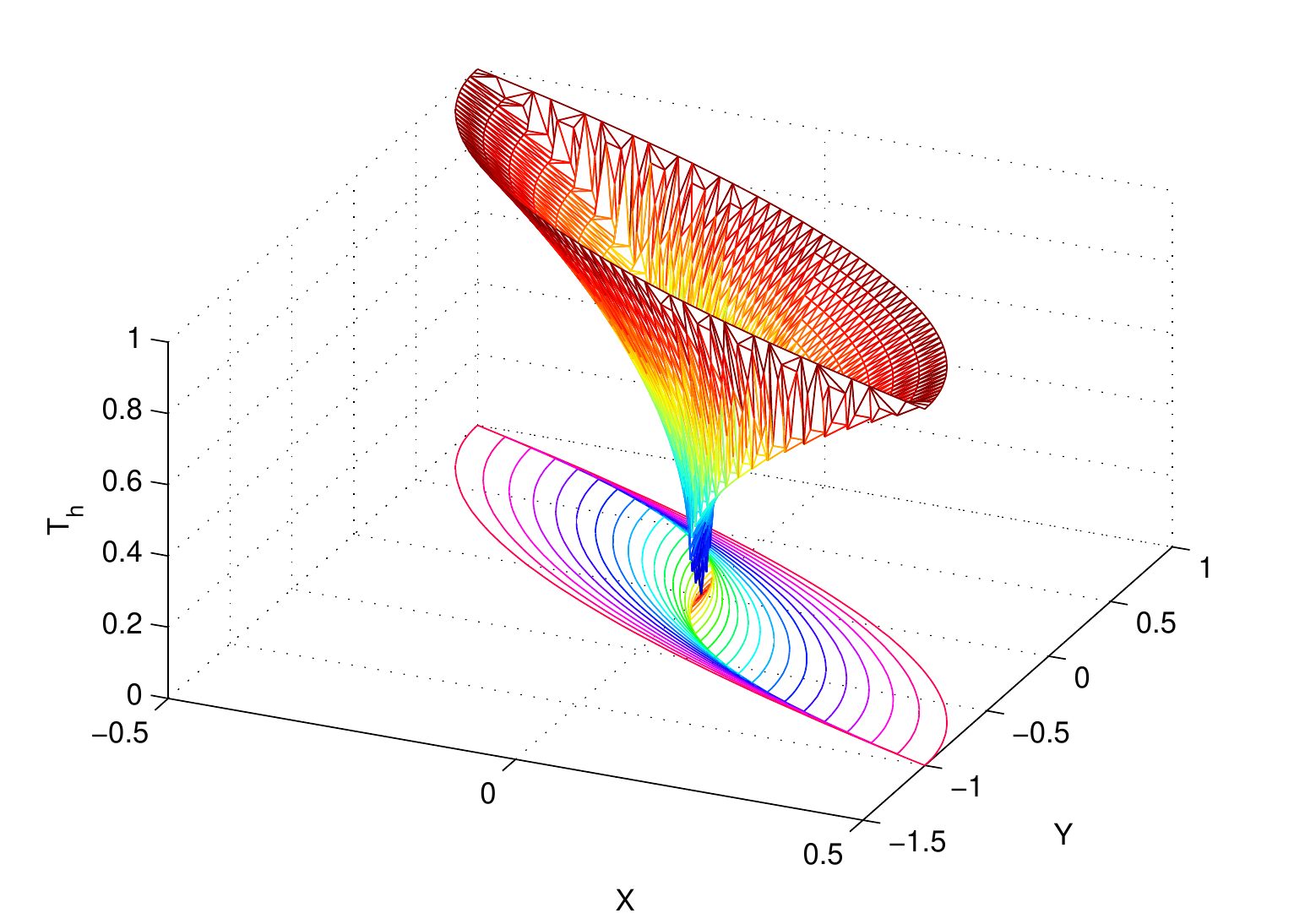}
                        \hspace*{-4ex}
			\includegraphics[scale=0.375]{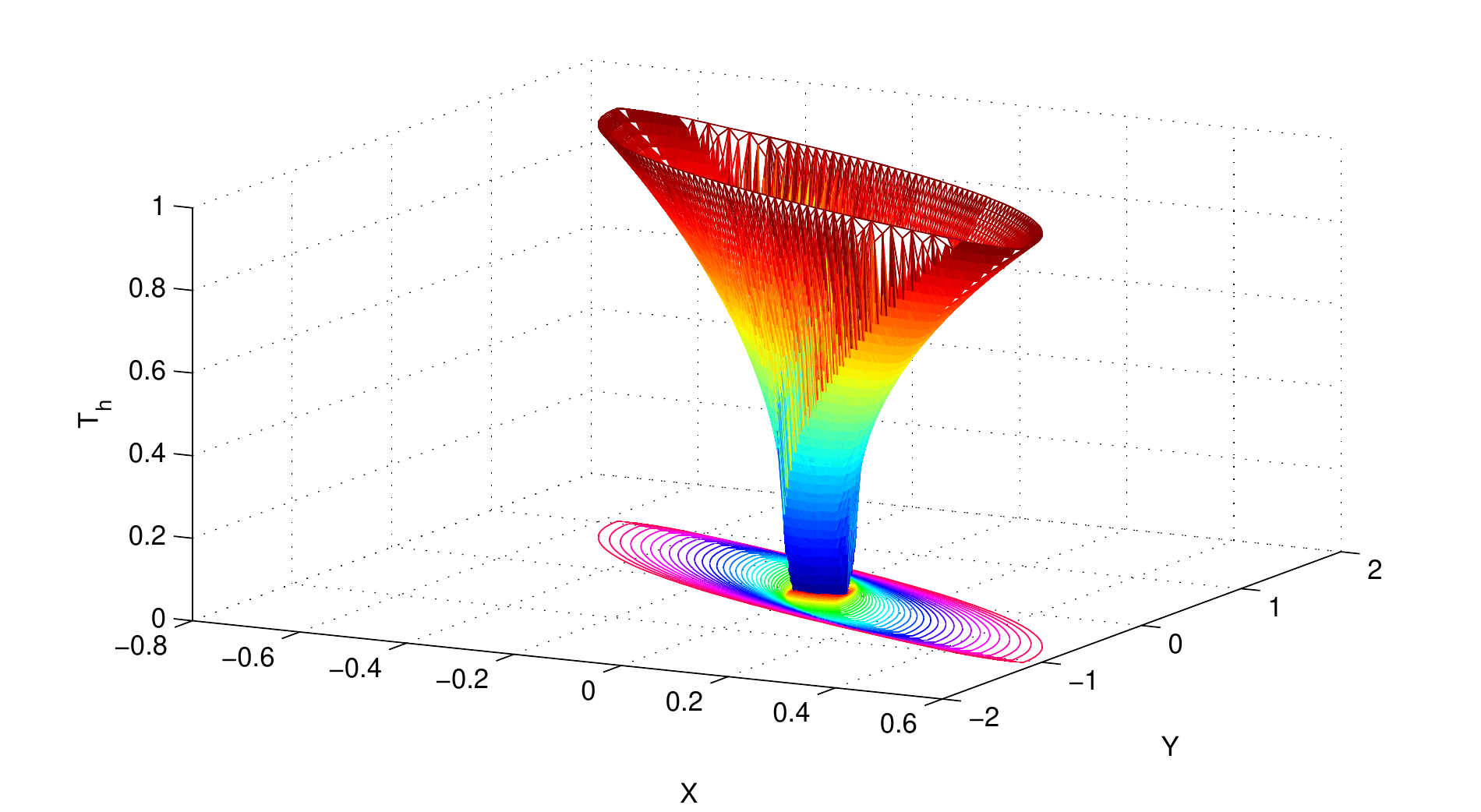}
   			\caption{Minimum time function  for Example~\ref{ex:2}a)  with target set $\{0\}$ resp.\ $B_{0.05}(0)$}\label{exam_31}
			\label{fig:2a_small_ball}
			\label{fig:2a_origin}
		\end{center}
	\end{figure}

	As in Example~\ref{ex:1}  we can consider the dynamics \eqref{example3} with the origin as a target (see the minimum time function in Fig.~\ref{fig:2a_origin}~(left). In this case, the numerical computation by PDE approaches, i.e., the solution of the associated Hamilton-Jacobi-Bellman equation (see e.g.,~\cite{F})  requires the replacement of the target point $0$ by a small ball  $B_\varepsilon(0)$ for  suitable  $\varepsilon>0$. This replacement surely increases the error of the  calculation   (compare the minimum time function in Fig.~\ref{fig:2a_small_ball} for $\varepsilon = 0.05$). However, our proposed approach works perfectly regardless of the fact whether  $\mathcal{S}$  is a  two-dimensional set or a singleton.
	\\[1ex]
	b) harmonic oscillator dynamics (see~\cite[Chap.~1, Section~1.1, Example 3]{LM})
	\begin{equation}\label{example4}
	\dot{x}_1=x_2,\,\dot{x}_2=-x_1+u,\,\,u\in U :=[-1,1].
	\end{equation} 
	Since the Kalman rank condition
	$
	\rk\Big[ B, A B \Big] = 2,
	$
	the minimum time function $T(\cdot)$ is also continuous.
	The plot for $T(x)$ for the harmonic oscillator with the origin as
	target, $ t_f=6,\,N_{\mathcal{R}} = 100,\,
	N=5 $ and $K=40$ is shown in Fig.~\ref{fig:2b_origin}.
	
	According to Section \ref{sec:converg} we construct open-loop time-optimal controls for the discrete 
	problem with target set $\mathcal{S} = \{0\}$ by Euler's method. In Fig.~\ref{fig:exam_32} the corresponding discrete open-loop time-optimal
	trajectories for Examples~\ref{ex:2}a)~(left) and b)~(right) are depicted.
        
	\begin{figure}[htp]
		\begin{center}
			\includegraphics[scale=0.27]{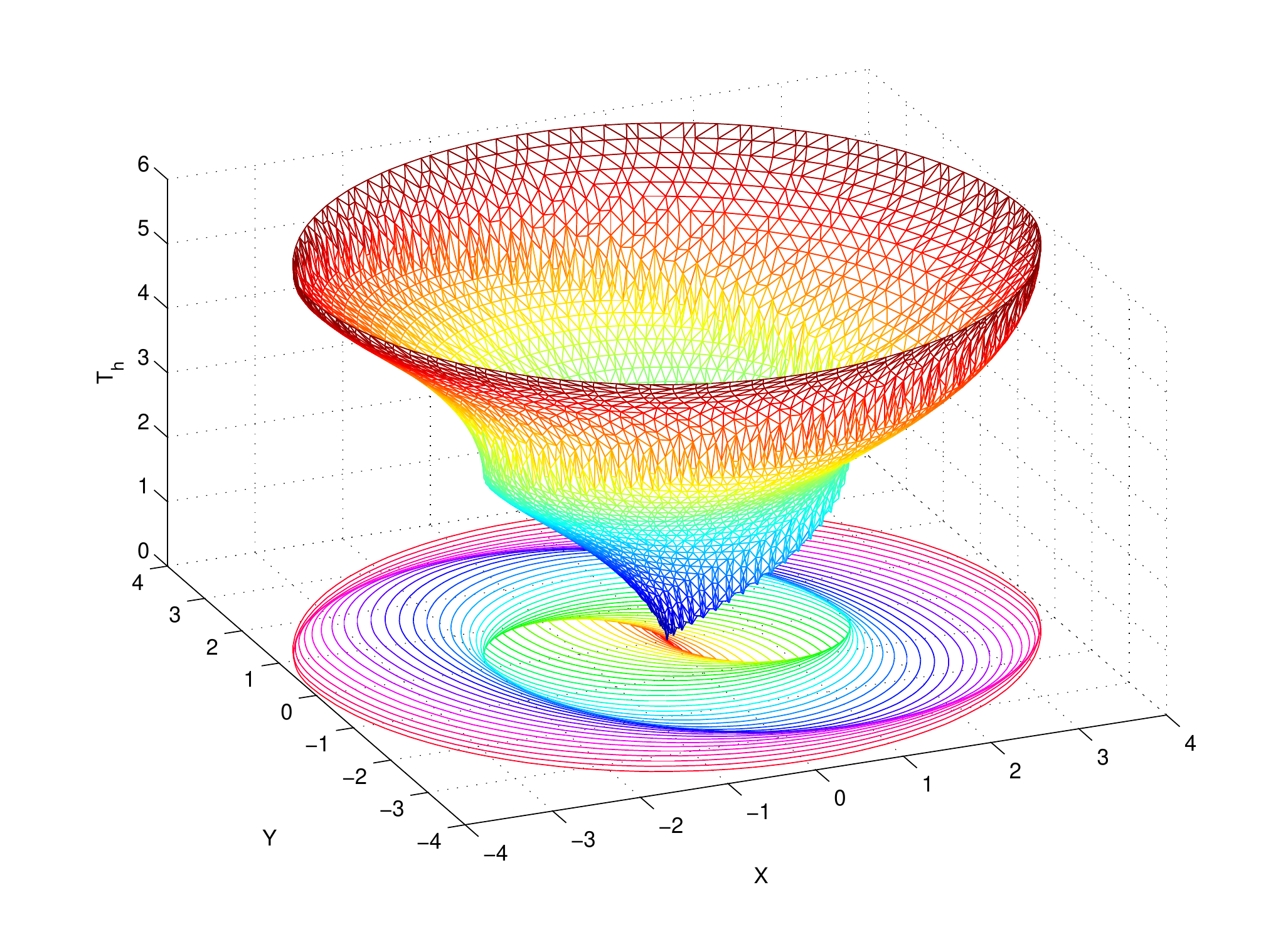}
   			\caption{Minimum time functions for Example~\ref{ex:2}b)}
			\label{fig:2b_origin}
		\end{center}
	\end{figure}
	\begin{figure}[htp]
		\begin{center}
			\includegraphics[width=2.4in,height=2.1in]{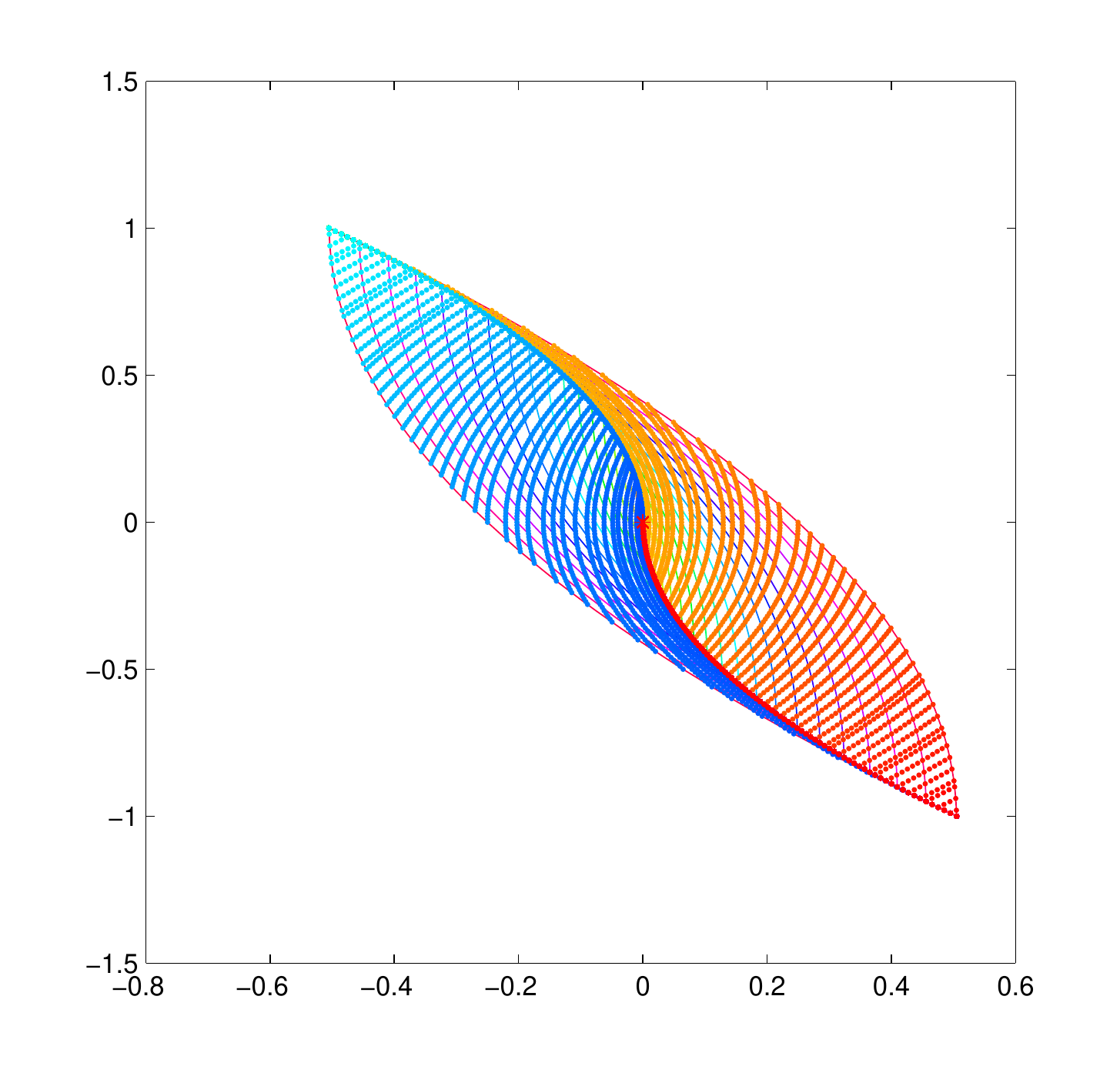}
			\includegraphics[width=2.5in,height=2.1in]{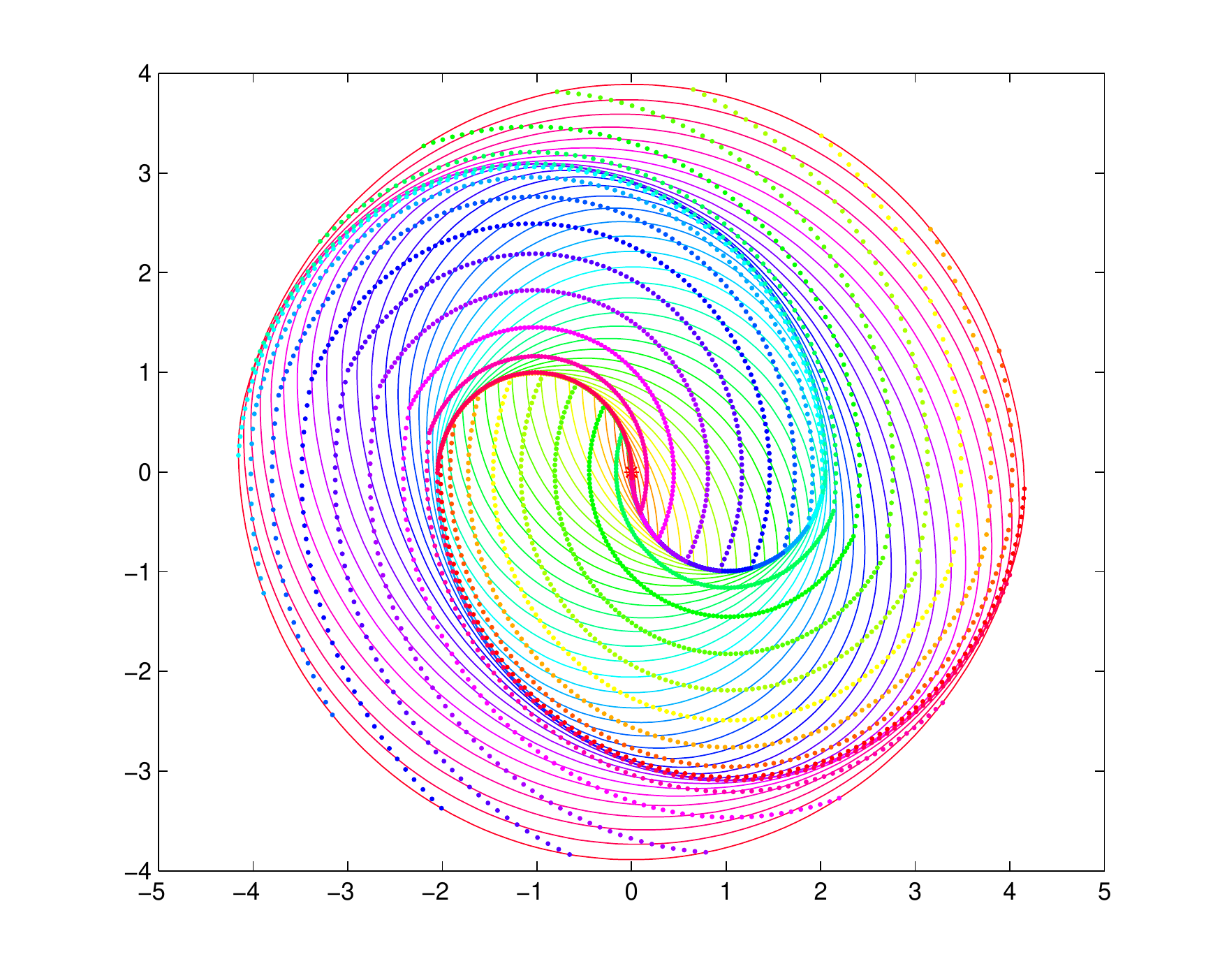}
   			\caption{Approximate optimal trajectories for Example~\ref{ex:2}a) resp.~b)}\label{fig:exam_32}
		\end{center}
	\end{figure}
\end{example} 
The following two examples exhibit smoothness of the support functions and would even allow 
for methods with order higher than two with respect to time discretization.
The first 
one has a special linear dynamics and is smooth, although the control set is a unit square.
\begin{example} 
	\label{ex:3a} 
	In the third linear two-dimensional example the reachable set for various end times $t$ 
	is always a polytope with four vertices and coinciding outer normals at its faces. 
	Therefore, it is a smooth example which would even justify the use of methods with higher order than 2 to
	compute the reachable sets (see \cite{BLcham,BL}).
	It is similar to Example~\ref{ex:counter_ex_1}, but has an additional column in matrix $B$ and is a variant of \cite[Example~2]{BLcham}.
	
	Again, we fix $t_f = 1$ as maximal time value and compute the result with $N=2$.
	We choose $N_{\mathcal{R}} = 50$ normed directions, since the reachable set has only four different
	vertices.
	\begin{equation}\label{example15}
	\begin{bmatrix}
	\dot x_1 \\[0.3em]
	\dot x_2 
	\end{bmatrix}=\begin{bmatrix}
	0 & -1 \\[0.3em]
	2 &  3  
	\end{bmatrix}\begin{bmatrix}
	x_1 \\[0.3em]
	x_2  
	\end{bmatrix}+\begin{bmatrix}
	1 & -1 \\[0.3em]
	-1 & 2  
	\end{bmatrix}\begin{bmatrix}
	u_1 \\[0.3em]
	u_2  
	\end{bmatrix},
	\end{equation}
	where $(u_1,\,u_2)^\top \in [-1,1]^2$. Let the origin be the target set  $\mathcal{S}$. 
	The fundamental solution matrix of the time-reversed dynamics of \eqref{example15} is given by
	\begin{align*}
	\Phi(t,\tau) & = \begin{bmatrix}
	2 e^{-(t-\tau)} - e^{-2(t-\tau)} 
	& e^{-(t-\tau)} - e^{-2(t-\tau)} \\[0.3em]
	-2 e^{-(t-\tau)} + 2 e^{-2(t-\tau)} 
	& -e^{-(t-\tau)} + 2 e^{-2(t-\tau)}
	\end{bmatrix}.
	\end{align*}
	This is a smooth example in the sense that the support function  for  the time-reversed
	set-valued dynamics of \eqref{example15}, 
	\begin{align*}
	\delta ^*(l,\Phi(t,\tau)\bar B(\tau)[-1,1]^2)=e^{-(t-\tau)}\vert l_1-l_2 \vert + e^{-2(t - \tau)}\vert l_1-2l_2 \vert,
	\end{align*}
	is smooth with respect to $\tau$  uniformly in $l \in S_1$.
	
	The analytical formula for the (time-continuous) minimum time function is as follows:
	\begin{equation*}
	\begin{aligned}
	T((x_1,x_2)^\top)=\max \bb{& t \colon \ t \geq 0 \text{ is the solution of one of the equations }\\
		& x_2=-2x_1\pm (e^{-t}-1),\,x_2=-x_1\pm 1/2(1-e^{-2t})}.
	\end{aligned}
	\end{equation*}
	A least squares approximation of the function $C h^{p}$ for the error term
	reveals ${C = 2.14475}$, $p = 0.8395$ for the set-valued combination method of order~1
	and $C = 23.9210$, $p= 1.7335$ (if $p=2$ is fixed, then $C=70.1265$)
	for the one of order~2. The values are similar to the expected ones from Remark \ref{Rem_errT}, 
	since the minimum time function (see Fig.~\ref{fig:3a}~(left)) is Lipschitz (see~\cite[Sec.~IV.1, Theorem~1.9]{BCD}).
	
	Similarly, another variant of this example with a one-dimensional control can be constructed by deleting
	the second column in matrix $B$. The resulting (discrete and continuous-time) reachable sets
	would be line segments. Thus, the algorithm would compute the fully discrete minimum time
	function on this one-dimensional subspace. The absence of interior points in the reachable
	sets is not problematic for this approach in contrary to common approaches based on the
	Hamilton-Jacobi-Bellman equation as shown in Example~\ref{ex:counter_ex_1}.
	\begin{table}[h]
	\small
		\begin{tabular}{|l|c|c|}
			\hline
			\mbox{ }\ \ \,$h$ & \textbf{Euler scheme \& Riemann sum} 
			& \textbf{Heun's scheme \& trapezoid rule} \\
			\hline
			$0.05$ & $0.170\phantom{0}$ & $0.1153\phantom{000}$ \\
			\hline
			$0.025$ & $0.095\phantom{0}$ & $0.0470\phantom{000}$ \\
			\hline
			$0.0125$ & $0.0599$ & $0.0133\phantom{000}$ \\
			\hline
			$0.00625$ & $0.0285$ & $0.0032\phantom{000}$ \\
			\hline
		\end{tabular}\\[2ex]
		\caption{Error  estimates for Example~\ref{ex:3a} 
			for methods of order 1 and 2} 
		\label{tab:3}
	\end{table}

	\begin{figure}[htp]
		\begin{center}
			\includegraphics[scale=0.35]{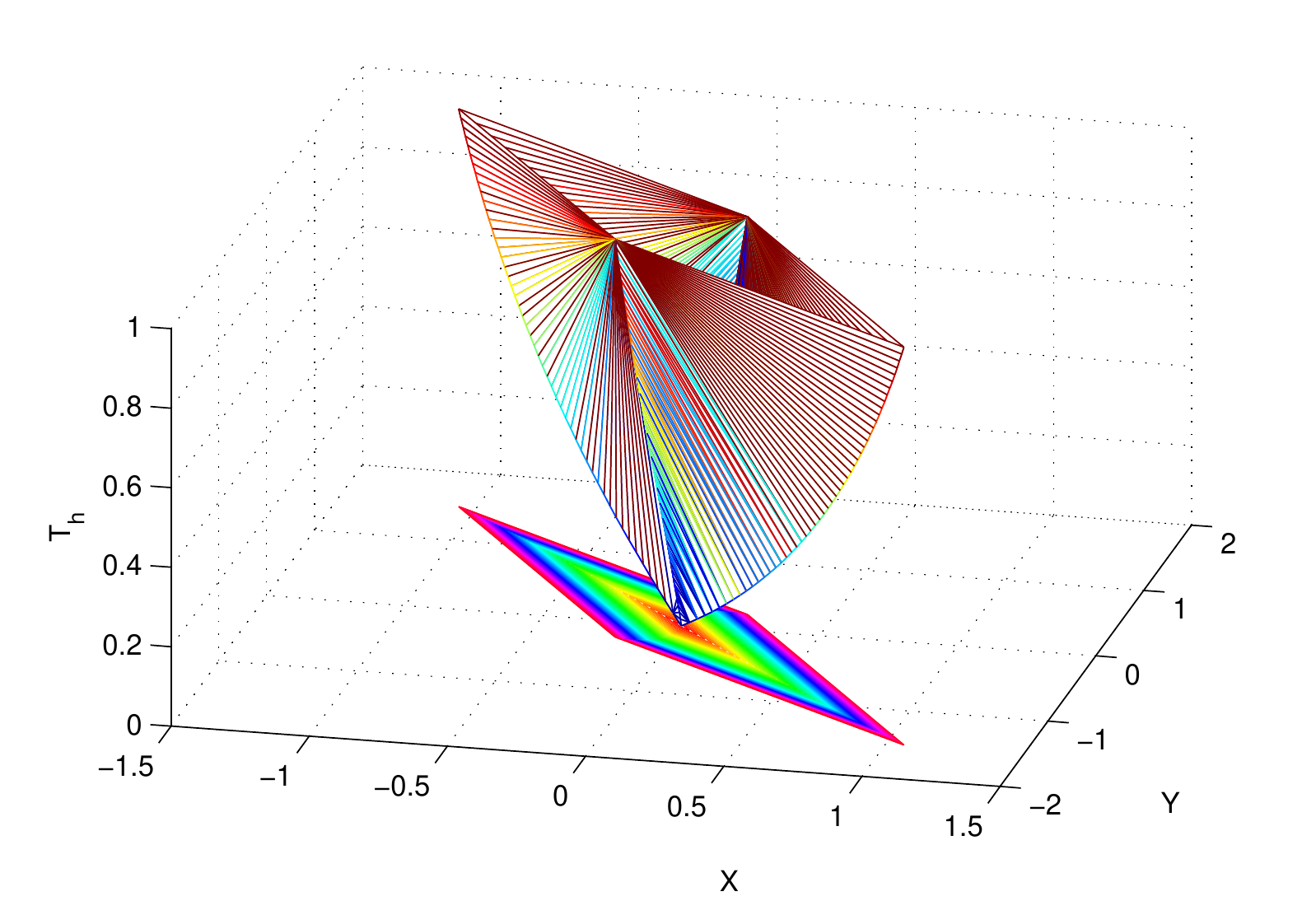}
			\ \,
			\includegraphics[scale=0.4]{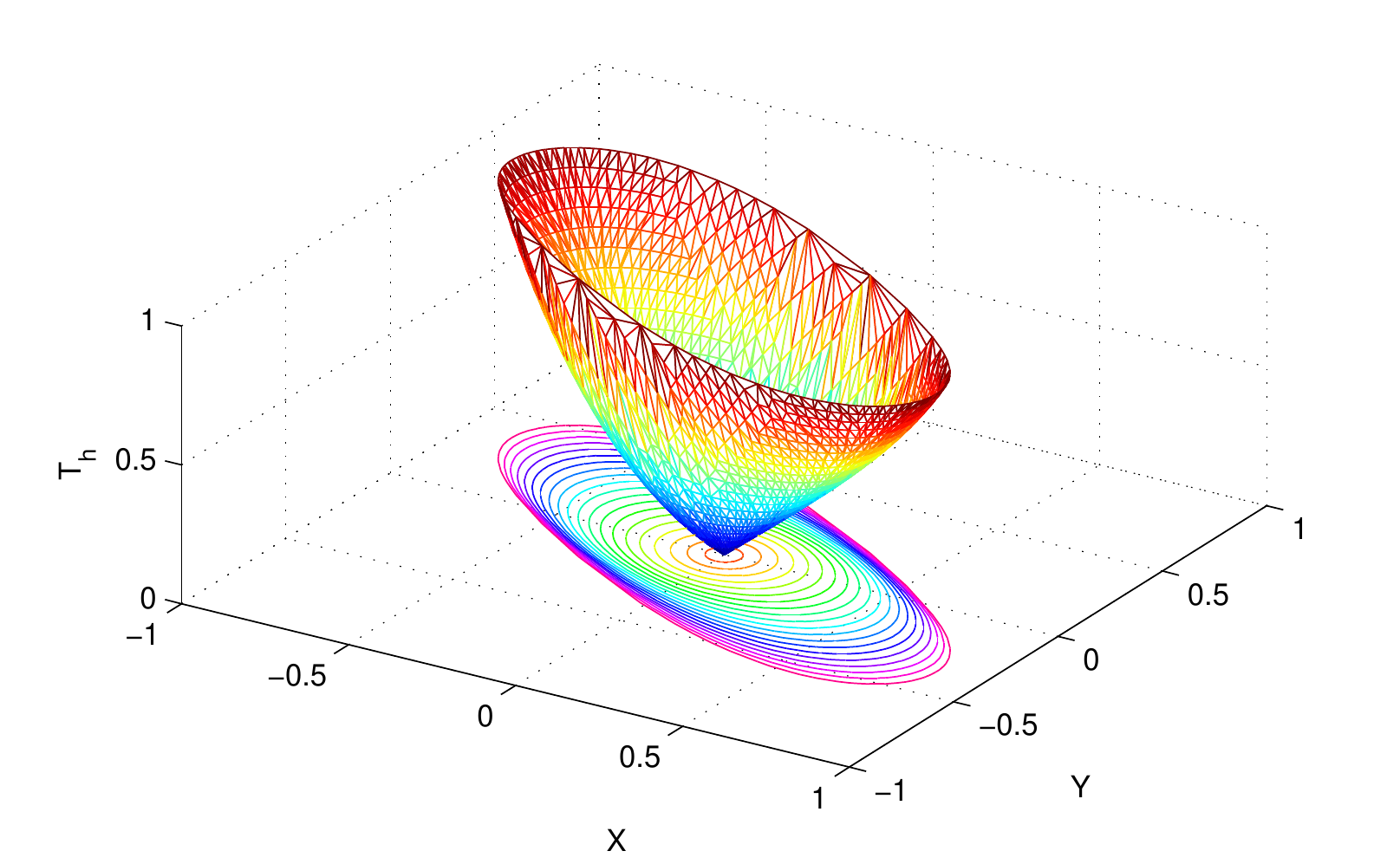}
   			\caption{ Minimum time functions for Examples~\ref{ex:3a} and~\ref{ex:3b}} 
			\label{fig:3a}
		\end{center}
	\end{figure}
\end{example} 
 
The next example involves a ball as control set and leads naturally to a smooth problem.
\begin{example} 
	\label{ex:3b}
	The following smooth example is very similar to the previous example.
	It  is given in \cite[Example~4.2]{BBCG}, \cite[Example~4.4]{BL} 
	\begin{equation}\label{example16}
	\begin{bmatrix}
	\dot{x}_1 \\[0.3em]
	\dot{x}_2  
	\end{bmatrix}=\begin{bmatrix}
	0 & -1 \\[0.3em]
	2 & 3  
	\end{bmatrix}\begin{bmatrix}
	x_1 \\[0.3em]
	x_2  
	\end{bmatrix}+B_1(0)
	\end{equation}
	and uses a ball as control set. This is a less academic example than Example~\ref{ex:3a} (in which the matrix $B(t)$ was carefully chosen), since a ball as control
	set often allows the use of higher order methods for the computation of reachable sets
	(see~\cite{BL,B}). Here, no analytic formula for the minimum time function is available
	so that we can study only numerically the minimum time function (see Fig.~\ref{fig:3a}~(right)).
	Obviously, the support function is again smooth with respect to $\tau$ uniformly in all
	normed directions $l$, since
	\begin{align*}
	\delta ^*(l,\Phi(t,\tau) B_1(0) & = \| \Phi(t,\tau)^\top l \|.
	\end{align*}
\end{example} 
\subsection{A nonlinear example}
\label{subsec_nonlin}

The following special bilinear example with convex reachable sets may provide the hope to extend 
our approach to some class of nonlinear dynamics.
We approximate the time-reversed dynamics of Example~\ref{ex:4} 
by Euler's and Heun's method.

\begin{example} 
	\label{ex:4}
	The nonlinear dynamics is one of the examples in \cite{GL}.
	\begin{equation}\label{example5}
	\dot{x}_1=-x_2+x_1 u,\,\,\dot{x}_2=x_1+x_2 u,\,\,u\in [-1,1].
	\end{equation}
	With this dynamics, after computing the true minimum time function we observe that $T(\cdot)$ is Lipschitz continuous and its sublevel set, which is exactly the reachable set at the corresponding time, satisfies the required properties. The target set  $\mathcal{S}$  is $B_{0.25}(0)$.  \\
	
	We fix $t_f = 1$ as maximal computed value for the minimum time function
	and $N=2$.
	Estimating the error term $C h^{p}$ in Table~\ref{tab:4} by least squares approximation yields the values
	$C = 0.3293133$, $p= 1.8091$ for the set-valued Euler method
	and $C = 0.5815318$, $p= 1.9117$ for the Heun method. 
	
	The unexpected good behavior of Euler's method stems from the specific behavior of trajectories.
	Although the distance of the end point of the Euler iterates for halfened step size to the
	true end point is halfened, but the distance of the Euler iterates to the boundary of the 
	true reachable set is almost shrinking by the factor 4 due to the specific tangential
	approximation. 
	In Fig.~\ref{exam_5} the Euler iterates are marked with \textasteriskcentered\ in red color,
	while Heun's iterates are shown with $\circ$ marks in blue color. The symbol $ \bullet $ marks the end point of the corresponding true solution.
	
	Observe that the dynamics originates from the following system in polar coordinates
	\begin{equation*} 
	\dot{r}=r u,\,\,\dot{\varphi}=1 ,\,\,u\in [-1,1].
	\end{equation*}
	Hence, the reachable set will grow exponentially with increasing time.
	\begin{table}[h]
	\small
		\captionsetup{width=.95\linewidth}%
		\begin{tabular}{|l|c|c|c|}
			\hline
			\mbox{ }\ \,$h$ & $N_{\mathcal{R}}$  & \textbf{set-valued Euler scheme} 
			& \textbf{set-valued Heun's scheme} \\
			\hline
			$0.5$ & $ 50 $&$0.0848\phantom{00}$ & $0.1461\phantom{00}$ \\
			\hline
			$0.1$ & $100 $&$0.0060\phantom{00}$ & $0.0076\phantom{00}$ \\
			\hline
			$0.05$ &$200$  & $0.0015\phantom{00}$ & $0.0020\phantom{00}$ \\
			\hline
			$0.025$& $400$ & $0.00042\phantom{0}$ & $0.000502$ \\
			\hline
			$0.0125$ &$800$ & $0.000108$ & $0.000126$ \\
			\hline
		\end{tabular}\\[2ex]
		\caption{Error estimates for Example~\ref{ex:4} with set-valued methods
			of order 1 and 2}
		\label{tab:4}
	\end{table}

	\noindent
        The minimum time function for this example is shown in Fig.~\ref{exam_5}.
 	
	\begin{figure}[htp]
		\captionsetup{width=.95\linewidth}%
		\begin{center}
			\includegraphics[scale=0.3]{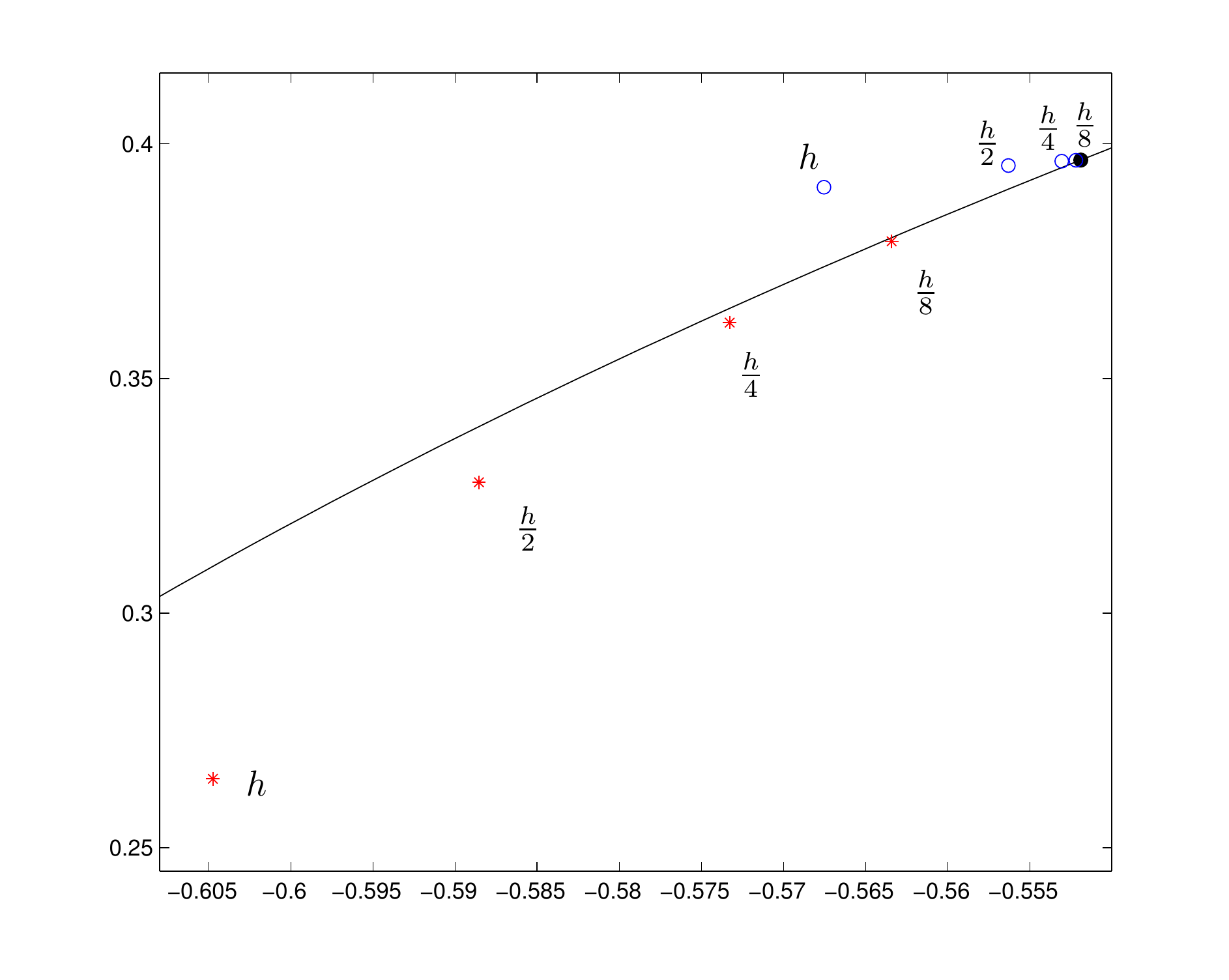}
			\includegraphics[width=2.53in,height=2.4in]{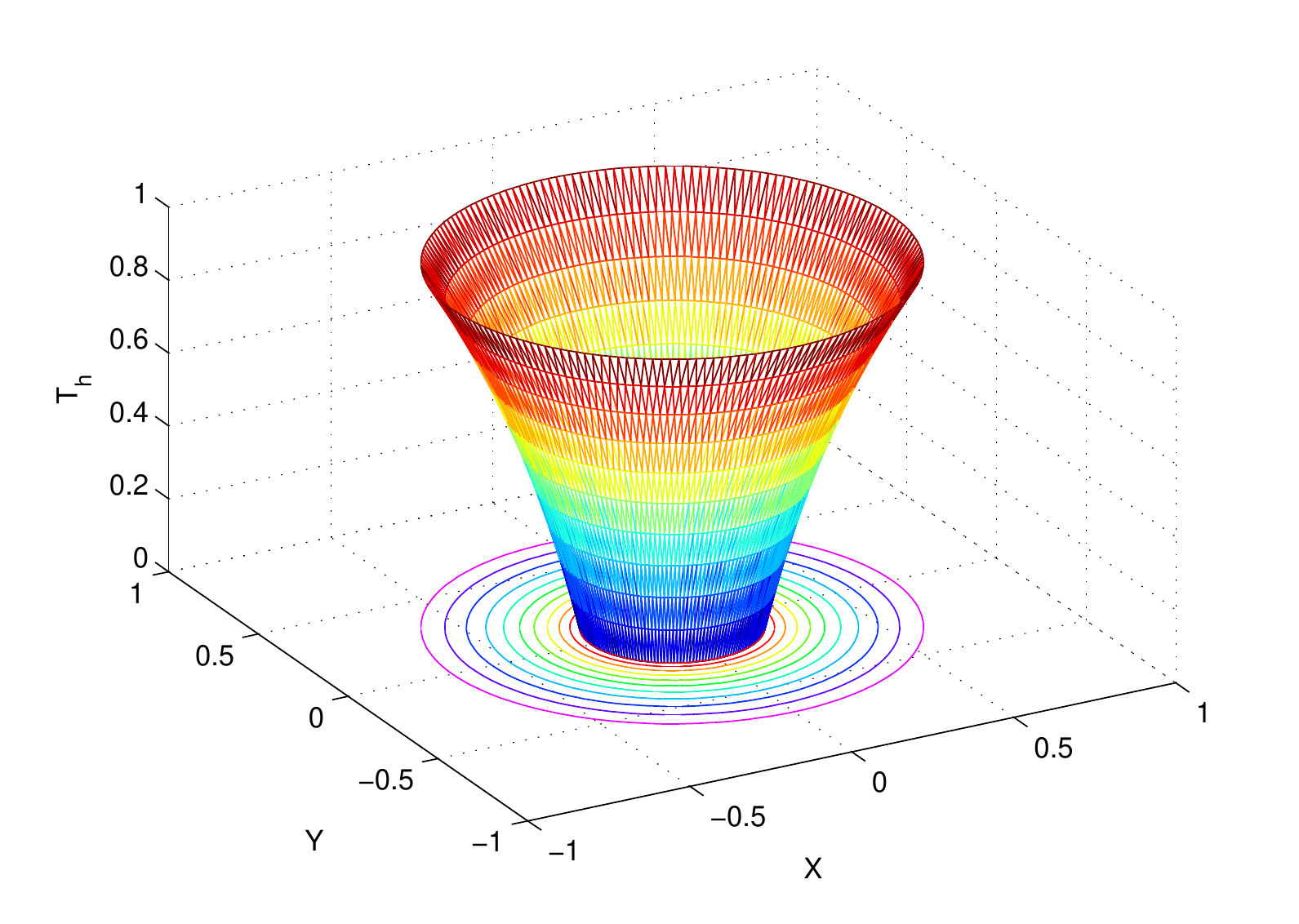}        
			\caption{Euler and Heun's iterates,  minimum time function for Example~\ref{ex:4} resp.} 
			\label{exam_5}
		\end{center}
	\end{figure}
\end{example} 

\subsection{Non-strict expanding property of reachable sets}
\label{subsec_non_exp_prop}%

The next example violates the continuity of the minimum time function (the dynamics is not normal).
Nevertheless, the proposed Algorithm \ref{algorithm} is able to provide a good
approximation of the discontinuous minimum time function.

This example also shows that boundary points of the reachable set
can no longer be characterized via time-minimal points
(compare Propositions~\ref{prop:bd_descr_monotone_case_w_level_set} 
and~\ref{prop:bd_descr_w_level_set}), if the strict expanding property 
of (the union of) reachable sets is not satisfied.

\begin{example} 
	Consider the dynamics
	\label{ex:counter_ex_1}
	\begin{equation}
	\begin{bmatrix}
	\dot{x}_1 \\[0.3em]
	\dot{x}_2  
	\end{bmatrix}=\begin{bmatrix}
	0 & -1 \\[0.3em]
	2 & 3  
	\end{bmatrix}\begin{bmatrix}
	x_1 \\[0.3em]
	x_2  
	\end{bmatrix}+ u_1 \begin{bmatrix}
	1  \\[0.3em]
	-1   
	\end{bmatrix}
	\end{equation}
	with $u_1 \in U = [-1,1]$, $\mathcal{S} = \{0\}$ and $t \in I=[0, t_f]$.
	
	The Kalman rank condition yields
	$
	\rk\Big[ B, A B \Big] = 1 < 2,	
	$
	so that the normality of the system is not fulfilled. The fundamental system $\Phi(t,\tau)$ (for the time-reversed system)
	is the same as in Example~\ref{ex:3a} so that 
	\begin{align*}
	& \delta ^*(l,\Phi(t,\tau)\bar B(\tau)[-1,1])=e^{\tau-t}\vert l_1-l_2 \vert 
	= e^{\tau-t} \delta ^*(l, V), \\
	\intertext{with the line segment $V = \co(\begin{bmatrix}
		-1  \\[0.3em]
		1   
		\end{bmatrix}, \begin{bmatrix}
		1  \\[0.3em]
		-1   
		\end{bmatrix})$. 
		Since}
	& \int_0^t \delta^*(l, \Phi(t,\tau)\bar B(\tau)[-1,1]) d\tau 
	= e^{\tau-t} \bigg|_{\tau=0}^t \cdot \delta ^*(l, V) \\
	= & (1 - e^{-t}) \cdot \delta ^*(l, V) = \delta ^*(l,  (1 - e^{-t}) V), \\
	\mathcal{R}(t) & = \int_0^t \Phi(t,\tau)\bar B(\tau)[-1,1] d\tau =  (1 - e^{-t}) V.
	\end{align*}
	Hence, the reachable set is an increasing line segment (and always part of the same line
	in $\R^2$, i.e., it is one-dimensional so that the interior is empty). Clearly,
	both inclusions 
	\begin{align}
	\mathcal{R}(s) \subset \mathcal{R}(t) \quad\text{or}\quad
	\RSU(s) \subset \RSU(t)
	\end{align}
	i.e., \eqref{ex:relaxed_expand}, 
	hold, but not the strictly expanding property of $\overline{\mathcal{R}}(\cdot)$ 
	on $[t_0, t_f]$ in Assumptions \ref{standassum}(iv) and~(iv)', i.e.,
	\begin{align}
	\overline{\mathcal{R}}(t_1) & \subset \inter \overline{\mathcal{R}}(t_2) \text{\ for all $t_0\le t_1<t_2\le t_f$, where \rule{0ex}{4ex}} \label{eq:strict_exp_prop} \\
	\overline{\mathcal{R}}(t) & = \begin{cases}
	\mathcal{R}(t) & \text{for Assumption (iv)}, \\
	\RSU(t) & \text{for Assumption (iv)'}.
	\end{cases}\nonumber
	\rule[-5.5ex]{0ex}{5.5ex}
	\end{align}
	The strict inclusion only holds in the relative interior. By \cite[Sec.~IV.1, Proposition~1.2]{BCD} the minimum time function is discontinuous (it has infinite values outside the line segment).
	
	The plots of the two continuous-time reachable sets $\mathcal{R}(t)$ for $t=1,2$ together with the true minimum time function (in red)
	and its discrete analogue (in green) obtained by the Euler scheme with $h = 0.025$ are shown
	in Fig.~\ref{fig:counter_exam}:
	\begin{figure}[htp]
		\begin{center}
			\includegraphics[scale=0.34]{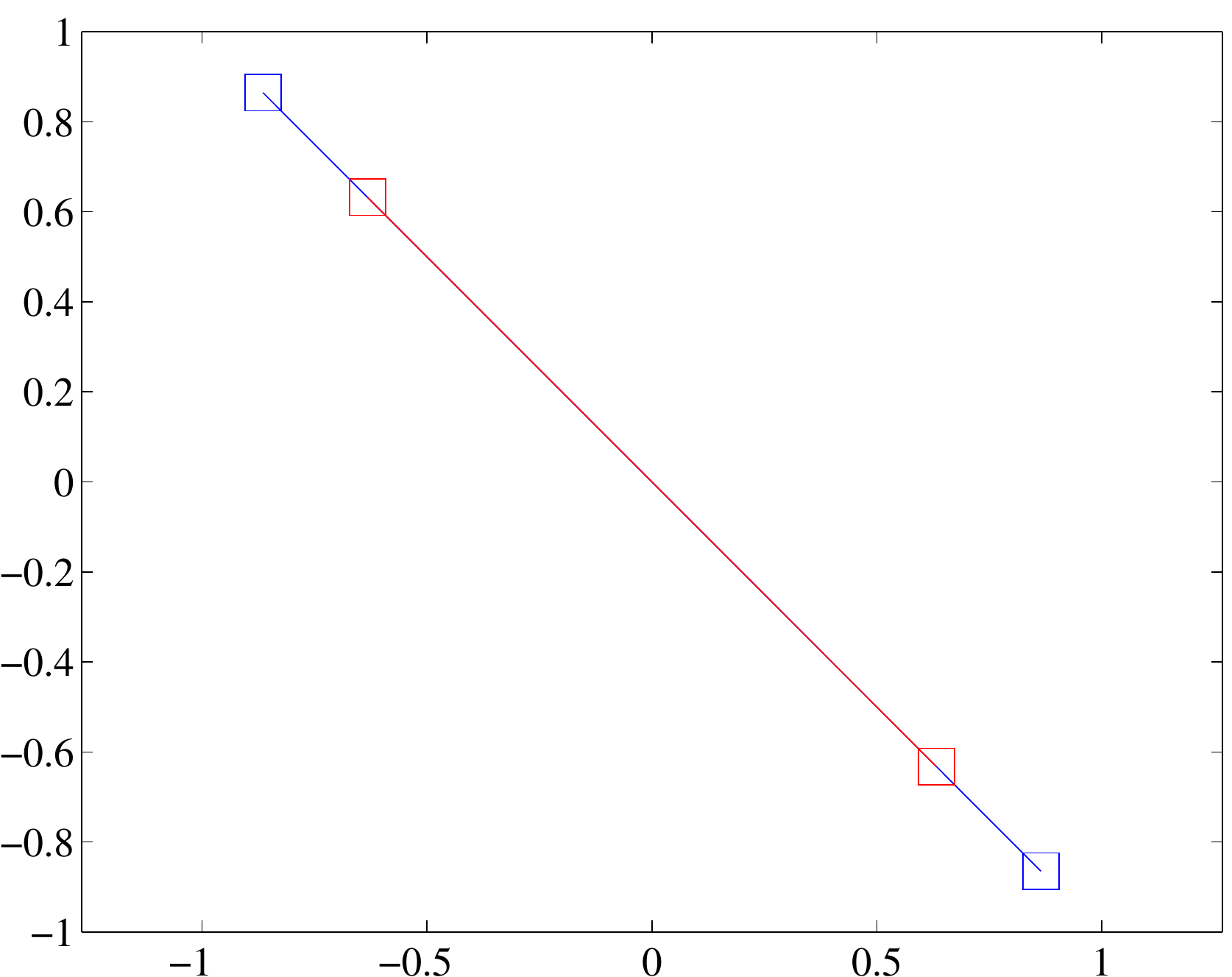}
			\quad 
			\includegraphics[scale=0.35]{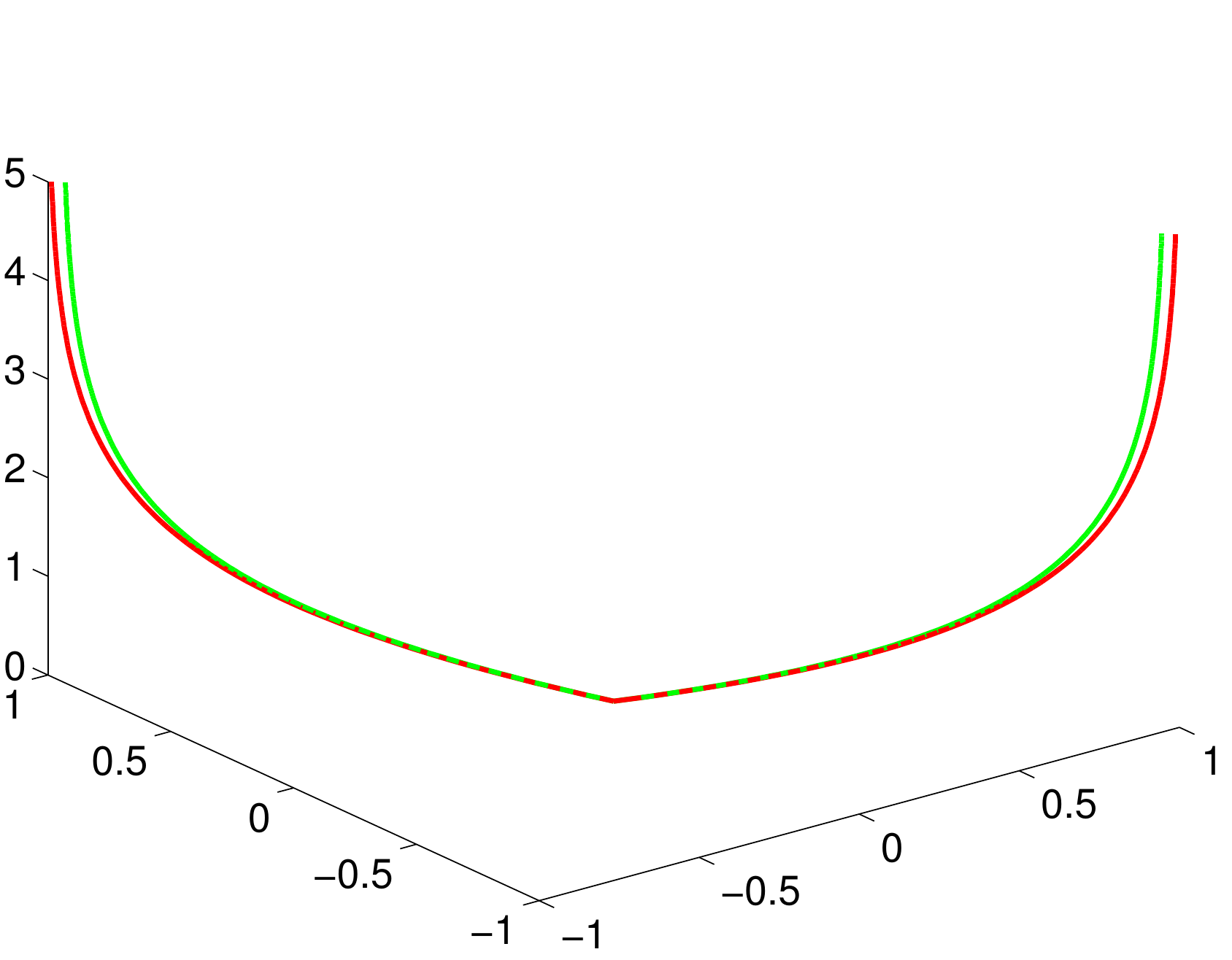}
		\end{center}
   		\caption{Reachable sets and minimum time functions for Example~\ref{ex:counter_ex_1}}
		\label{fig:counter_exam}
	\end{figure}
       
	The two red points are the end points of the line segment for a smaller time $t_1 = 1$,
	the two blue points are the end points of the line segment for a larger time $t_2 = 2 > t_1$.
	The blue line segment is the reachable set for time $t_2$ (also the reachable set up to
	time $t_2$). \\
	All four points are on the boundary of the blue set $\mathcal{R}(t_2)$, but the minimum time to reach
	the two blue points is $t_2$, while the minimum time to reach
	the two red points is $t_1 < t_2$ which is a contradiction to 
	Proposition \ref{prop:bd_descr_w_level_set}.
	 
\end{example} 

\subsection{Problematic examples}

The first two examples show linear systems with hidden stability properties so that the discrete
reachable sets converge to a bounded convex set if the time goes to infinity (or is large enough
in numerical experiments). For larger end times the numerical calculation gets more demanding,
since the step size must be chosen small enough according to Proposition \ref{errTt2}.
The remaining part of the subsection 
contain examples that violate Assumption \ref{standassum}(iv) and (iv')
from Proposition~\ref{prop:bd_descr_monotone_case_w_level_set}.
The examples demonstrate that a target or a control set not containing the origin 
(as a relative interior point) might lead to non-monotone behavior
of the (union of) reachable sets. In all of these examples the union of reachable sets is no longer convex.

\begin{example}\label{ex:n1}
	We consider the following time-dependent linear dynamics:
	\begin{equation}\label{examplen1}  
	\dot{x}_1=-x_2,\,\,\dot{x}_2=x_1 - \frac{1}{t^2} u,\,\,u\in [-1,1].
	\end{equation}
	The reachable sets converge towards a final, bounded, convex set due to
	the scaling factor $\frac{1}{t^2}$ in the matrix $B(t)$, see~Fig.~\ref{fig:exam_n1}~(left).
	From a formal point of view the strict expanding condition \eqref{inclusionR} in Proposition \ref{errTt2}
	is satisfied, but the positive number $\varepsilon$ tends to zero for increasing end time. On the other hand
	we would stop the calculations if the Hausdorff distance of two consequent discrete reachable sets 
	is below a certain threshold.
	\begin{figure}[htp]
		\begin{center}
			\includegraphics[scale=0.34]{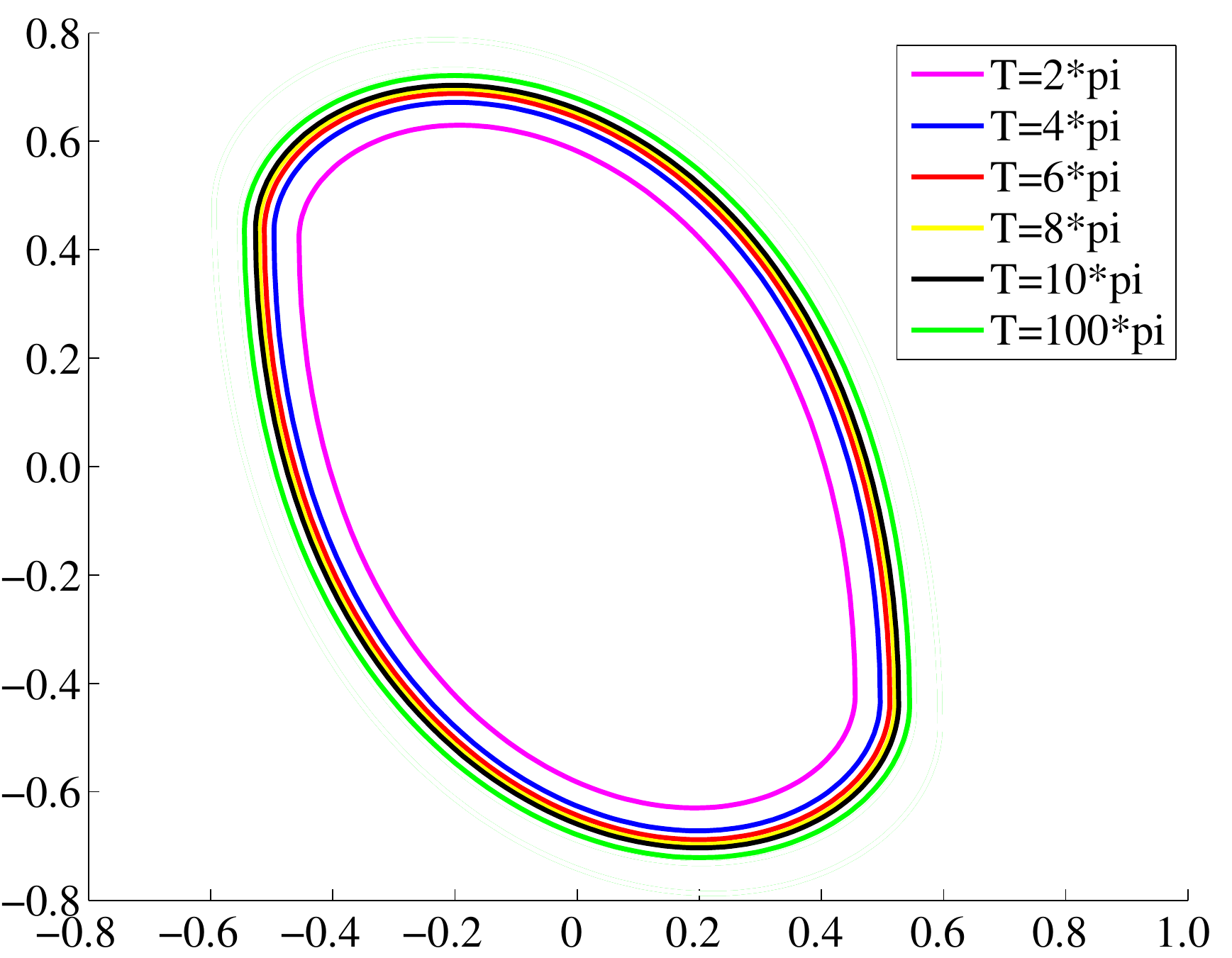}
			\ \,
			\includegraphics[scale=0.34]{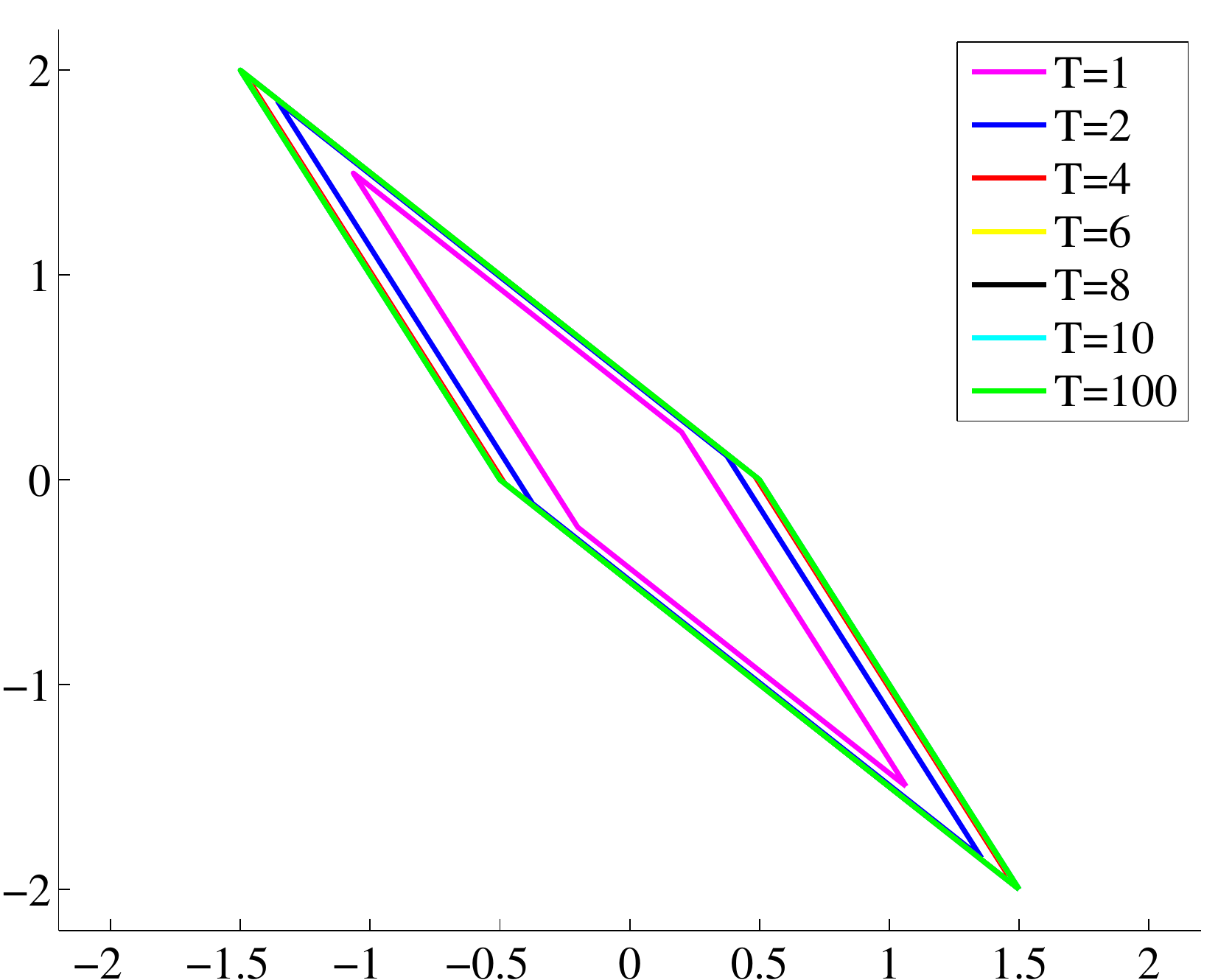}
			\caption{Reachable sets with various end times $t_f$ for Examples~\ref{ex:n1} and \ref{ex:n2}} 
			\label{fig:exam_n1}
		\end{center}
	\end{figure}
\end{example} 

\begin{example}\label{ex:n2} 
	We reconsider Example~\ref{ex:3a} on the larger time interval $[t_0,t_f]=[0,100]$. $\bar{A}$ has negative eigenvalues -1 and -2. 
	Hence, the reachable sets converge towards a final, bounded, convex set, 
	see~Fig.~\ref{fig:exam_n1}~(right). We experience the same numerical problems as 
	in Example~\ref{ex:n1}.
\end{example} 
         
\begin{example} 
	\label{ex:n4}
	Let the dynamics be given by
	\begin{equation}\label{examplen4}  
	\dot{x}_1=x_2 + u_1,\,\,\dot{x}_2=-x_1 + u_2,\,\,u\in B_1(0).
	\end{equation}
	In case a) the reachable sets for a given end time are always balls around the origin
	(see~Fig.~\ref{fig:exam_n4} (left)),
	if the target set is chosen as the origin.
	In case b) the point $(2,2)^\top$ is considered as target set. Fig.~\ref{fig:exam_n4}~(right) shows
	that the union of reachable sets is no longer convex.
	\begin{figure}[htp]
		\begin{center}
			\includegraphics[scale=0.37]{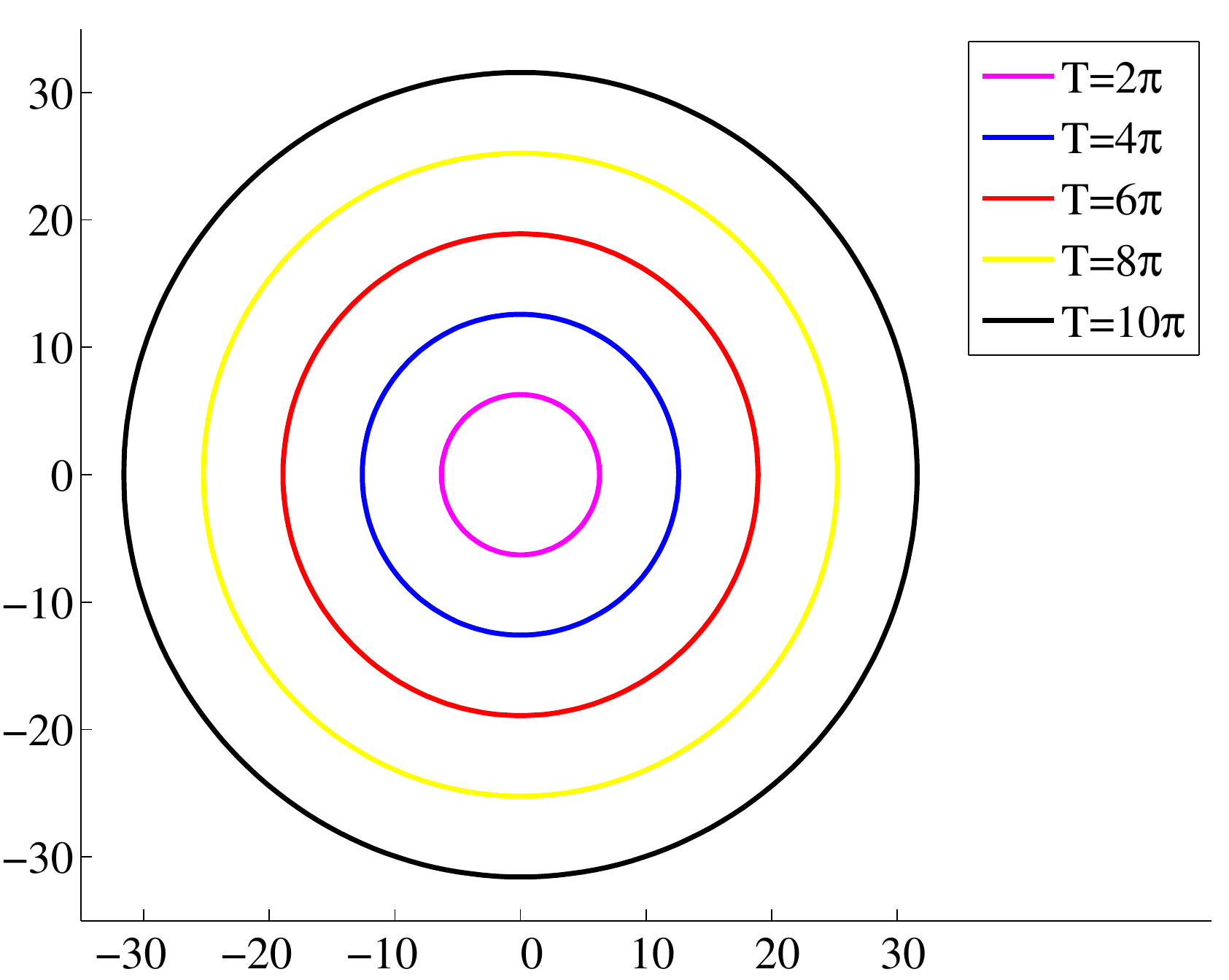}
			\quad
			\includegraphics[scale=0.37]{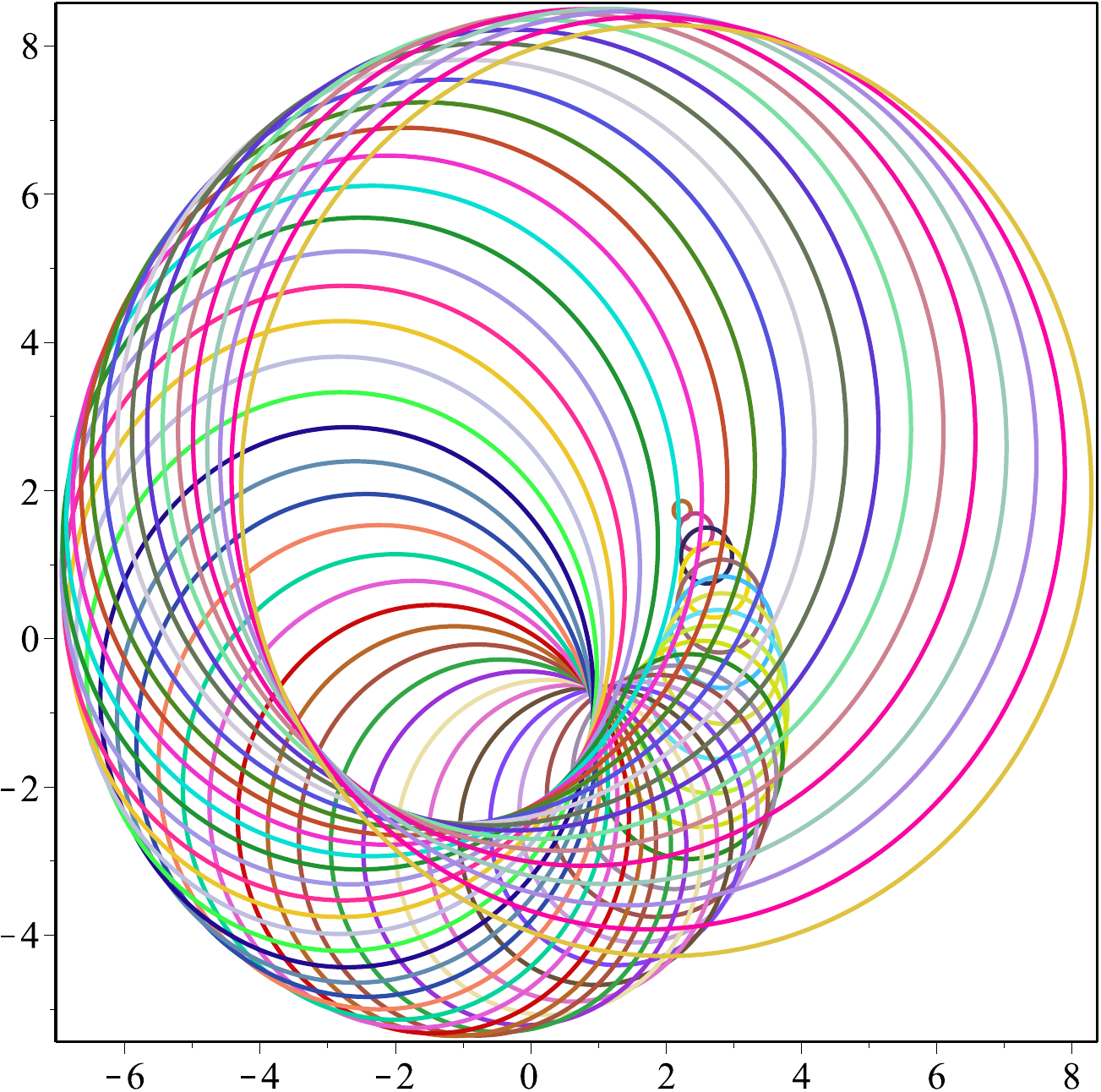}
			\caption{Reachable sets with various end times and different target sets for Example~\ref{ex:n4}} 
			\label{fig:exam_n4}
		\end{center}
	\end{figure}
\end{example} 
 
\begin{example}\label{ex:n5}
	Let us reconsider the dynamics \eqref{example3} of Example \ref{ex:2}, i.e.,
	\begin{equation*}
	\dot{x}_1=x_2,\,\dot{x}_2=u,\,\,u\in U.
	\end{equation*}
	In the first case, let $\mathcal{S}=\bb{0},\,\,U=[0,1]$. From the numerical calculations, we observe that $\mathcal{R}(t)$, $\mathcal{R}_{\le}(t)$ are still convex and satisfy \eqref{ex:relaxed_expand} in Remark \ref{rem:inclusion}, but violate the strictly expanding property \eqref{eq:strict_exp_prop} as shown in Fig.~\ref{fig:exam_n5}~(left). In the other case, $U=[1,2]$ is chosen. The convex reachable set $\mathcal{R}(t)$ is not only enlarging, but also moving which results in the nonconvexity of $\mathcal{R}_{\le}(t)$. Moreover, both \eqref{ex:relaxed_expand} in Remark \ref{rem:inclusion} and \eqref{eq:strict_exp_prop} are not fulfilled in this example as depicted in Fig.~\ref{fig:exam_n5}~(right).
	\begin{figure}[ht!]
		\begin{center}
			\includegraphics[scale=0.34]{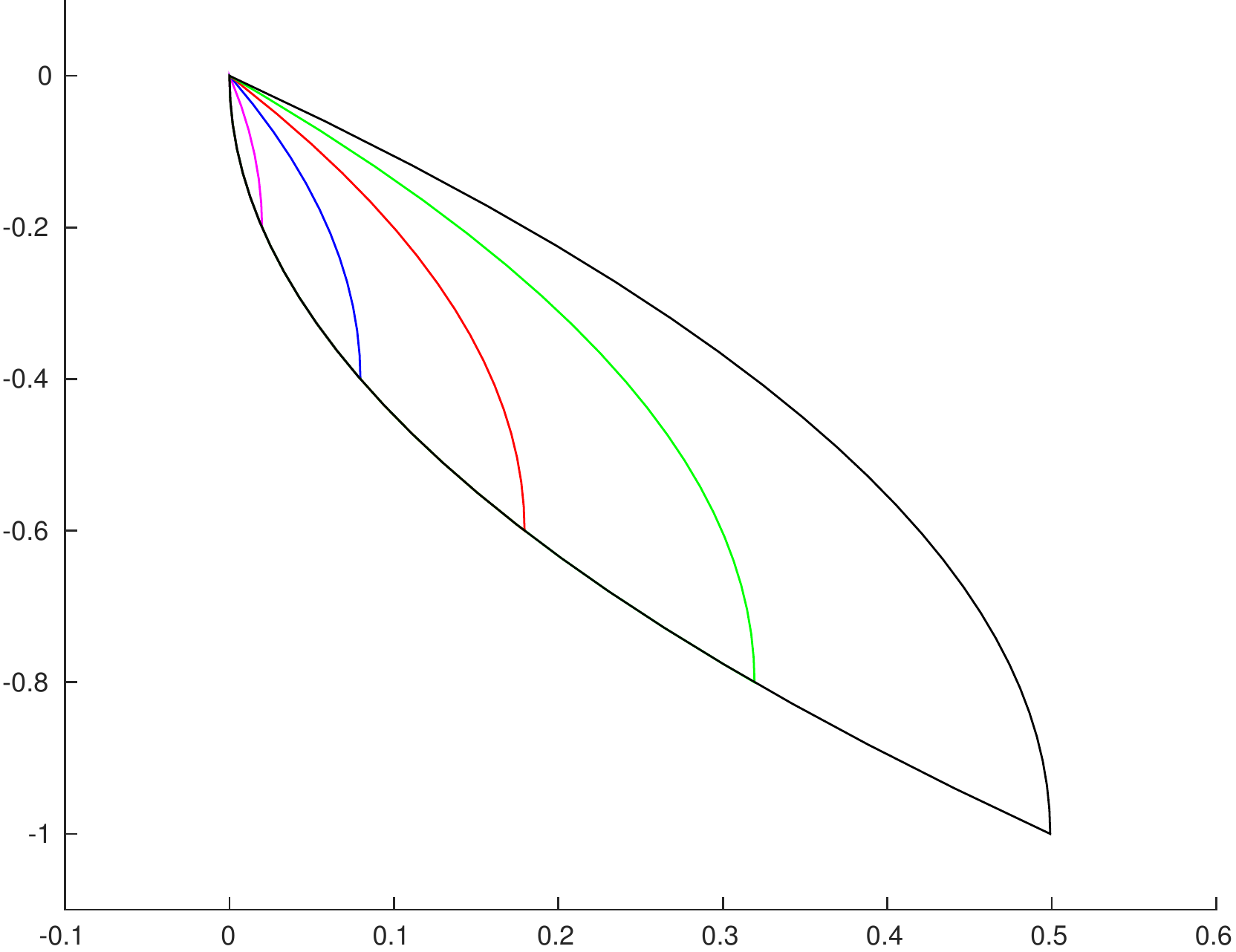}
			\quad
			\includegraphics[scale=0.34]{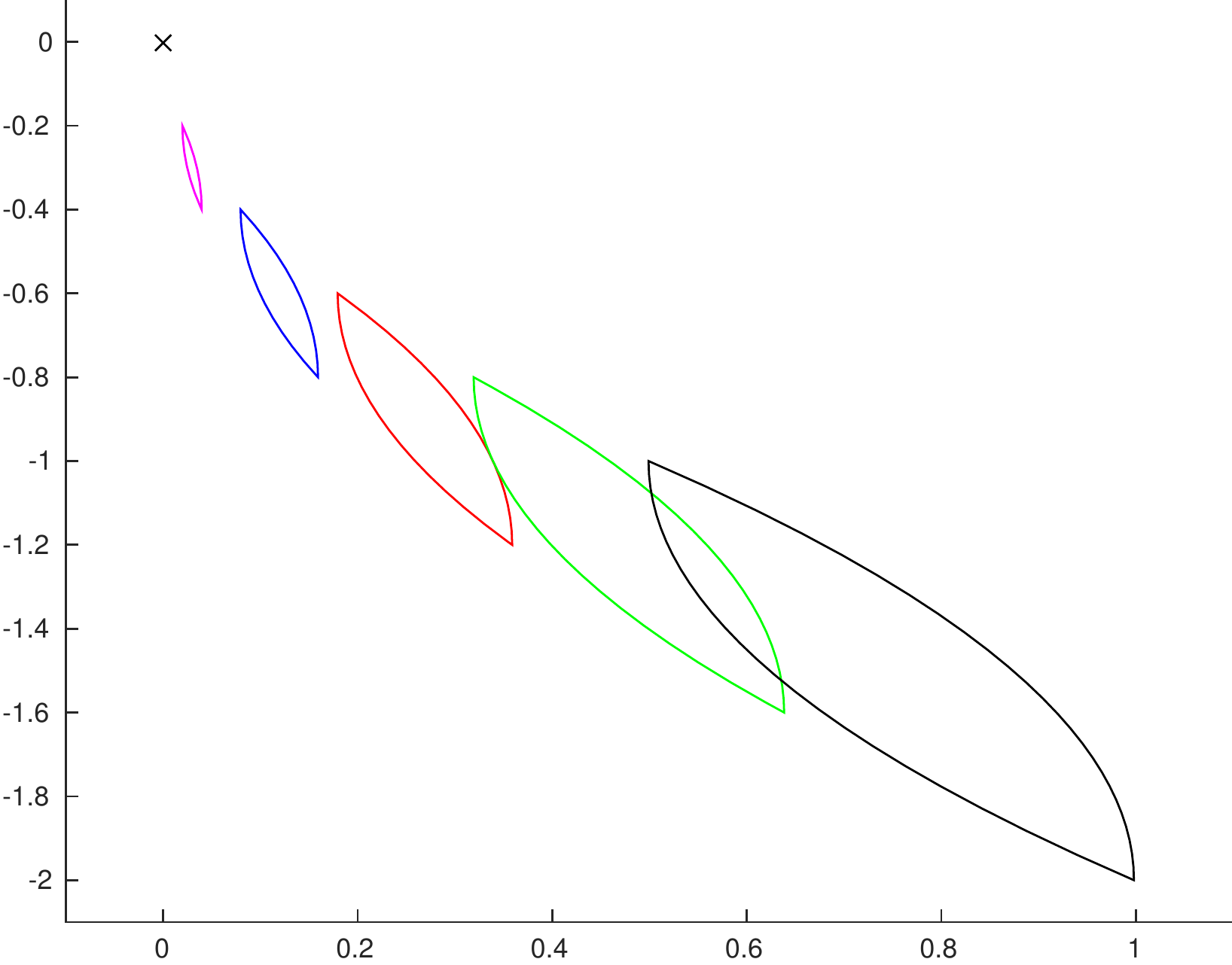}
			\caption{Reachable sets with various end times and different control sets for Example~\ref{ex:n5}} 
			\label{fig:exam_n5}
		\end{center}
	\end{figure}
	
\end{example}

\section{Conclusions}
\label{sec:concl}

Although the underlying set-valued method approximating reachable sets in linear control 
problems is very efficient,
the numerical implementation is a first realization only and can still be considerably improved.
Especially, step~3 in Algorithm \ref{algorithm}
can be computed more efficiently as in our
test implementation. Furthermore, higher order methods like the set-valued Simpson's rule
combined with the Runge-Kutta(4) method are an interesting option in examples where the underlying
reachable sets can be computed with higher order of convergence than 2, especially if 
the minimum time function is Lipschitz. But even if it is merely H\"older-continuous
with $\frac{1}{2}$, the higher order in the set-valued quadrature method
can balance the missing regularity of the minimum time function and improves the error
estimate.
We are currently working on extending this first approach for linear control problems
without the monotonicity assumption on reachable sets and for nonlinear control problems.
\section*{Acknowledgements}
The authors want to express their thanks to Giovanni Colombo, especially for pointing us to Attouch's theorem, and to Lars Gr\"une. Both of them supported us with helpful suggestions and motivating questions. They are also grateful to Matthias Gerdts about his comments to optimal control.


\medskip
Received xxxx 20xx; revised xxxx 20xx.
\medskip

\end{document}